\newtheorem{thm}{Theorem}[section]
\theoremstyle{plain}
\newtheorem{prop}[thm]{Proposition}
\newtheorem{lem}[thm]{Lemma}
\newtheorem{cor}[thm]{Corollary}
\theoremstyle{definition}
\newtheorem{defn}[thm]{Definition}
\theoremstyle{remark}
\newtheorem{rem}[thm]{Remark}
\title{Mono-anabelian Reconstruction of Number Fields with Restricted Ramification}
\author{Yu Mao\\
\texttt{maoyu@westlake.edu.cn}
\and
Xiao Wang \\ 
\texttt{wangxiao24@westlake.edu.cn}
}
\date{}
\begin{document}
\maketitle
\begin{abstract}
In this paper, we apply Hoshi's results in \cite{Ho1} and \cite{Ho2}, to establish a group-theoretic reconstruction of a number field $K$ together with its maximal unramified outside $S$ extension for some density 1 set of prime $S$ starting from the profinite group $G_{K,S}$.
\end{abstract}
\newpage
\tableofcontents
\newpage
\section*{Conventions}
Throughout this paper, we shall use the following conventions. \\
\textbf{Sets}
\begin{itemize}
    \item We shall denote by $\mathbb N$ for the set of positive integers. Moreover, we shall denote by $\mathfrak{Primes}$ for the set of prime numbers.
    \item Let $\Sigma \subset \mathfrak{Primes}$ be a non-empty set of prime numbers, we shall write $\mathbb N(\Sigma)$ for the set of $\Sigma$-positive integers, which are positive integers whose prime divisors belong to $\Sigma$, and $1$ is always regarded as a $\Sigma$-integer for any non-empty $\Sigma$.
\end{itemize}
\textbf{Groups}
\begin{itemize}
    \item Let $G$ be a profinite group, we shall write $G^{\text{ab}}$ for its maximal abelian quotient, $G^{\text{sol}}$ for its maximal pro-solvable quotient.
    \item Let $G$ be an abelian group, we shall write $G_{\text{tor}}$ for the torsion subgroup of $G$. If moreover, $G$ is profinite, then $G_{\text{tor}}$ denotes the closure of the torsion subgroup in $G$. Moreover, for each integer $n$, we shall write $G[n]$ for the $n$-torsion subgroup of $G_{\text{tor}}$. 
    \item All homomorphisms between profinite groups are assumed to be continuous. 
    \item Let $G$ be a group, we shall denote by $G^{\Sigma}$ for the pro-$\Sigma$ completion of $G$ for some non-empty subset $\Sigma \subset \mathfrak{Primes}$. Moreover, if $G$ is abelian, we shall write $G^{\wedge} := \varprojlim_n G/nG$ (when written additively).
\end{itemize}
\textbf{Rings and Fields}
\begin{itemize}
    \item All rings throughout this paper are assumed to be commutative and unital.
    \item Let $R$ be a ring, we denote by $R^{\times}$ for the unit group of $R$. If $K$ is a field, we shall write $K_{\times}$ for the multiplicative monoid of $K$, i.e. $K$ with its additive structure forgotten.
    \item Let $K$ be a field. We shall write $\mu(K)$ for the roots of unity contained $K$, moreover, we shall write $\Lambda(K) := \varprojlim_n ~\mu(K)[n]$ where $n \in \mathbb N$.
    \item Let $K$ be a field. We shall write $\overline{K}$ for a fixed algebraic closure of $K$, $K^{\text{sol}}$ for the maximal pro-solvable extension of $K$ contained in $\overline K$, moreover, we write $K^{\text{ab}}$ for the maximal abelian extension of $K$ contained in $\overline{K}$.
    \item Let $K$ be a number field, we shall write $\mathscr{P}_K$ for the set of all primes of $K$, moreover, we write $\mathscr{P}_K^{\text{inf}}$ for the set of archimedean primes of $K$ and $\mathscr{P}_{K}^{\text{fin}}$ for the set of non-archimedean primes of $K$. 
    \item Let $K$ be a number field, we shall write $\mathscr{P}_K^{\bullet} := \{\mathfrak{p} \in \mathscr{P}_K^{\text{fin}}: \text{char}(\mathfrak{p}) = 2~\text{or}~e_{\mathfrak{p}}>1\}$ where $\text{char}(\mathfrak{p})$ is defined to be the residue characteristic of $\mathfrak{p}$ and $e_{\mathfrak{p}}$ denotes the ramification index of $\mathfrak{p}$. 
    \item Let $K$ be a number field, and let $\mathfrak{p} \in \mathscr{P}_K$ be a prime, we shall write $K_{\mathfrak{p}}$ for the completion of $K$ at $\mathfrak{p}$.
    \item Let $K$ be a number field, and let $S \subset \mathscr{P}_K$, we shall write $K_S$ for the maximal unramified outside $S$ extension of $K$, which is the union of all finite extension $L/K$ which is unramified outside $S$. We shall write $\mathcal{O}_{K,S}$ for the ring of $S$-integers, i.e. the subring of $K$ consisting of elements with non-negative valuations at primes outside $S$. Furthermore, we shall write $\mathbb N(S) := \{n \in \mathbb N: n \in \mathcal O_{K,S}^{\times}\}$. Moreover, we shall denote by $\delta(S)$ for the natural density of $S$. 
    \item Let $K$ be a number field, and let $S \subset \mathscr{P}_K$, we shall write $S_f$ for the subset of all non-archimedean primes in $S$.
    \item Let $K$ be a number field, and let $S \subset \mathscr{P}_K$ be a non-empty subset containing at least one finite prime. We say that a set of prime number $\Sigma$ is determined by $S$ or $S$ determines $\Sigma$ if $\Sigma = \{\ell \in \mathfrak{Primes}: \exists\mathfrak{p} \in S~\text{s.t.}~\text{char}(\mathfrak{p}) = \ell\}$.
    \item Let $K$ be a field, $K^{\text{sep}}$ the separable closure contained in $\overline{K}$, we write $G_K:=\text{Gal}(K^{\text{sep}}/K)$ for the absolute Galois group of $K$. If $K$ is a number field, and $S \subset \mathscr{P}_K$ is a subset of primes of $K$, we write $G_{K,S} := \text{Gal}(K_S/K)$.
\end{itemize}
\section{Introduction}
Anabelian geometry is roughly speaking, the study of schemes via their \'etale fundamental groups. The first result in anabelian geometry was proven in 1970s, known as the Neukirch-Uchida theorem:
\begin{thm}[The Neukirch-Uchida theorem]
Let $K,L$ be number fields. Then the natural map
$$
\text{Isom}(L^{\text{sol}}/L,K^{\text{sol}}/K) \to \text{Isom}(G_K^{\text{sol}},G_L^{\text{sol}})/\text{Inn}(G_L^{\text{sol}})
$$
is bijective.
\end{thm}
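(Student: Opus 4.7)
The proof follows the classical local-to-global strategy of Neukirch and Uchida. For surjectivity, which carries the substantive content, I would construct from an isomorphism $\sigma : G_K^{\text{sol}} \xrightarrow{\sim} G_L^{\text{sol}}$ a compatible isomorphism of solvable closures by first reconstructing the local fields at every prime and then gluing them through class field theory. Injectivity will then follow essentially formally from the rigidity of $G_L^{\text{sol}}$.

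The central step, and the main difficulty, is a purely group-theoretic characterization of the conjugacy classes of decomposition subgroups $D_{\mathfrak p} \subset G_K^{\text{sol}}$ for $\mathfrak p \in \mathscr{P}_K^{\text{fin}}$. Following Neukirch, I would characterize these as the maximal closed subgroups of $G_K^{\text{sol}}$ that are themselves isomorphic to the pro-solvable absolute Galois group of some $p$-adic local field; the distinguishing features are cohomological dimension, the abelianization structure predicted by local class field theory, and the shape of the torsion and Brauer group. This gives a $\sigma$-equivariant bijection $\mathscr{P}_K^{\text{fin}} \cong \mathscr{P}_L^{\text{fin}}$ respecting residue characteristic, together with induced isomorphisms of local Galois groups at every prime.

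I would then apply the local analogue of the theorem (recoverability of a local field from its pro-solvable absolute Galois group) to obtain a topological field isomorphism $K_{\mathfrak p} \cong L_{\sigma(\mathfrak p)}$ at each prime. These assemble adelically through global class field theory: the reciprocity isomorphism identifies $(G_K^{\text{sol}})^{\text{ab}}$ with the profinite completion of the idele class group, and $\sigma$ is compatible with the local reciprocity maps at every prime, so it induces an isomorphism of idele class groups. The diagonally embedded global multiplicative group $K^{\times}$ is characterized intrinsically using the product formula together with the local valuations recovered in the previous step, so the isomorphism descends to a multiplicative isomorphism $K^{\times} \cong L^{\times}$. The additive structure is then recovered by Uchida's argument, yielding a field isomorphism $K \cong L$ that lifts to $L^{\text{sol}} \cong K^{\text{sol}}$.

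For injectivity, two field isomorphisms inducing the same outer isomorphism of pro-solvable Galois groups differ by an element of the centralizer of $G_L^{\text{sol}}$ in $\mathrm{Aut}(L^{\text{sol}})$; a Chebotarev argument shows this centralizer reduces to $G_L^{\text{sol}}$ itself, which matches the quotient taken on the Galois side. The hard part of the program is the first step above — isolating decomposition groups using only the abstract profinite structure of $G_K^{\text{sol}}$ — and it is there that the solvability hypothesis is essential, since pro-solvable local Galois groups are precisely what admits the cohomological characterization sketched above.
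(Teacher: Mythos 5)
The paper does not actually prove this statement: Theorem 1.1 is quoted as classical background (Neukirch--Uchida) and used as a black box, so your proposal can only be measured against the classical argument. Your outline is a recognizable sketch of that argument, and the injectivity step via centre-freeness of $G_L^{\text{sol}}$ is essentially right. But there is a genuine gap at the step where you claim that ``the diagonally embedded global multiplicative group $K^{\times}$ is characterized intrinsically using the product formula together with the local valuations.'' The product formula cuts out the norm-one ideles, a vastly larger group than $K^{\times}$, and global class field theory only hands you $C_K$ modulo its connected component (equivalently, a completion of it), inside which the discrete subgroup $K^{\times}\subset\mathbb{I}_K$ is not visible. This is precisely the crux of Uchida's proof: one first recovers $\mu(M)$ for every finite subextension $M$ as the torsion of the kernel of the map from local unit data to $G_M^{\text{ab}}$ (compare Lemma 3.4 (ii) of the present paper, which is an avatar of exactly this step), and then uses Kummer theory over the tower of finite extensions, where the class groups die in the limit, to isolate the global elements among the local ones. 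The present paper is in fact organized around the failure of your shortcut: even with density-one ramification it cannot recover $\mathcal{O}_{K,S}^{\times}$ on the nose and must work with the container $\mathcal{H}^{\times}(K,S)$; if a one-line product-formula characterization of $K^{\times}$ were available, Sections 3 and 4 would be unnecessary.

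Two further points. Your closing claim that solvability is what makes the cohomological characterization of decomposition groups possible is backwards: absolute Galois groups of $p$-adic fields are already prosolvable, the same characterization works inside the full $G_K$, and the solvable version of the theorem is the \emph{stronger} statement, since $G_K^{\text{sol}}$ is a proper quotient of $G_K$ and so carries less information. What genuinely requires proof in the solvable setting --- and what you use silently when you invoke the local reconstruction result --- is that the decomposition subgroups of $G_K^{\text{sol}}$ at finite primes are still \emph{full}, i.e. isomorphic to the entire local absolute Galois group rather than to a proper quotient of it; this is the analogue of what the present paper extracts from Theorem 9.4.3 of Neukirch--Schmidt--Wingberg in Remark 2.3 and Proposition 4.10. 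Finally, ``injectivity follows formally'' still needs the centre of $G_L^{\text{sol}}$ to be trivial, which is itself proved from the decomposition-group structure; you do gesture at this with the Chebotarev remark, so that is a presentational rather than a mathematical issue.
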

Later, Hoshi in \cite{Ho1} and \cite{Ho2} established a group-theoretic algorithm, to recover a number field $K$ together with its maximal pro-solvable extension $K^{\text{sol}}$ group-theoretically starting from $G_K^{\text{sol}}$, i.e.
\begin{thm}[Hoshi, \cite{Ho1} and \cite{Ho2}]
Let $G$ be a profinite group isomorphic to $G_{K}^{\text{sol}}$ for some number field $K$. Then there is a group-theoretic reconstruction to a solvably closed field $\widetilde{F}(G)$ starting from $G$, and the fixed subfield $F(G) := \widetilde{F}(G)^G$ such that: \par 
(i) There exists an isomorphism $G \xrightarrow{\sim} \text{Gal}(\widetilde{F}(G)/F(G))$.\par 
(ii) The following diagram commutes
$$
\begin{tikzcd}
\widetilde{F}(G) \arrow[r,"\sim"] & K^{\text{sol}} \\
F(G) \arrow[u,hookrightarrow] \arrow[r,"\sim"] & K \arrow[u,hookrightarrow]
\end{tikzcd}
$$
where the top horizontal arrow is a Galois-equivariant isomorphism, vertical arrows are field embeddings and the bottom horizontal arrow is an isomorphism.
\end{thm}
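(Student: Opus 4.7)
The plan is to mimic the Neukirch--Uchida strategy in a \emph{mono-anabelian} (i.e., functorial in $G$) fashion, producing the field $F(G)$ as an invariant of the abstract profinite group $G$ rather than merely establishing an isomorphism between two fields given an isomorphism of their Galois groups. The outline will be: reconstruct the local data attached to each finite prime, use local class field theory to recover the local multiplicative structures, glue them into a global multiplicative monoid, and finally install the additive structure.

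First, I would group-theoretically identify the set of finite decomposition subgroups $D_v \subseteq G$. In the Neukirch--Uchida setup these can be singled out among closed subgroups of $G$ by cohomological fingerprints: the cohomological dimension of their maximal pro-$\ell$ quotients, Euler--Poincar\'e characteristics, the shape of their abelianizations, and local duality should collectively characterize those subgroups whose profinite type matches the maximal pro-solvable quotient of the absolute Galois group of some $p$-adic local field. The inertia subgroup $I_v \trianglelefteq D_v$ is then carved out as the kernel of the projection onto the unramified (i.e., procyclic Frobenius-generated) quotient, and the residue characteristic can be read off from the pro-$p$ part of $I_v^{\text{ab}}$.

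Next, for each reconstructed pair $(D_v, I_v)$, I would apply a mono-anabelian local reconstruction: from a group of the form $G_{K_v}^{\text{sol}}$ one recovers the local field $K_v$ as a topological field via local class field theory (the reciprocity map identifies $D_v^{\text{ab}}$ with the profinite completion of $K_v^\times$, and the valuation, uniformizer, and residue cardinality are read off from $I_v$ and the Frobenius quotient). Collecting these data over all $v$ assembles a group-theoretic model of the id\`elic multiplicative apparatus; the image of the diagonally embedded global multiplicative group $K^\times$ can then be characterized inside this adelic avatar by the product formula together with global reciprocity, recovering $K_{\times}$ as a monoid.

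The central obstacle, which I expect to be the hardest step, is the passage from the multiplicative monoid $K_{\times}$ to the field $F(G)$, i.e.\ the installation of the additive structure; multiplication and valuations alone do not a priori determine addition. Following Uchida's classical device, the relation ``$x + y = z$'' for $x,y,z \in K^\times$ is detected by examining the collection of valuations of $1 - x/z$ (equivalently, by recognizing which elements are principal units at each place), and these data have already been reconstructed from $G$ in the preceding steps. The mono-anabelian refinement requires that every choice in this procedure be made functorially in $G$, which is the delicate part and is the content of Hoshi's work in \cite{Ho1} and \cite{Ho2}. Once $F(G)$ is produced as a field, running the same construction on every open subgroup $H \leq G$ and taking the filtered colimit along $H$'s corresponding to finite subextensions yields the solvably closed field $\widetilde{F}(G)$; naturality of the construction then forces the commutativity of the stated diagram and the Galois-equivariance of the top isomorphism.
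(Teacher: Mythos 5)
First, note that the paper does not prove this statement at all: it is Hoshi's theorem, imported verbatim as background with a citation to \cite{Ho1} and \cite{Ho2}, so there is no in-paper argument to compare yours against. Judged on its own terms, your outline correctly identifies the Neukirch--Uchida-style skeleton (locate decomposition groups, recover local invariants, globalize multiplicatively, then install addition \`a la Uchida), but it has gaps that are not merely matters of detail.

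The most serious one is the claim that from $D_v$ one ``recovers the local field $K_v$ as a topological field via local class field theory.'' This is false: $p$-adic local fields are not determined by their absolute Galois groups (Jarden--Ritter, Yamagata), so no group-theoretic algorithm applied to $D_v$ alone can output $K_v$ as a field. What is recoverable locally is only the numerical data ($p$, $e$, $f$, $d$), the inertia filtration, and the multiplicative group $K_v^{\times}$ together with its unit filtration; the field structure on the local (and global) multiplicative monoids is precisely what must be reconstructed \emph{globally} at the end, which is why the additive step cannot be treated as an afterthought. Second, characterizing the diagonal image of $K^{\times}$ inside the id\`elic model ``by the product formula together with global reciprocity'' does not suffice: the kernel of the reciprocity map $C_K \twoheadrightarrow G_K^{\text{ab}}$ is strictly larger than the image of $K^{\times}$ (it contains the connected component of the identity), and in the mono-anabelian setting Hoshi instead pins down global elements via Kummer theory after synchronizing the local and global cyclotomes --- an ingredient entirely absent from your sketch, and one that the present paper has to redo in its restricted-ramification setting (its Section 3) precisely because it is nontrivial. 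Finally, you defer the additive reconstruction to ``the content of Hoshi's work in \cite{Ho1} and \cite{Ho2},'' i.e.\ to the theorem being proved, so the proposal is circular at exactly the point where the real difficulty lies. As a roadmap your text is reasonable; as a proof it is not one.
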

In anabelian geometry, we often call results like Theorem 1.1 bi-anabelian results, which roughly speaking, are results starting from an isomorphism between Galois groups/\'etale fundamental groups, ending up with an isomorphism between fields/schemes. \par 
On the other hand, we call results like Theorem 1.2 mono-anabelian results, which roughly speaking, are results starting from an abstract profinite group $G$ which is isomorphic to the (or some quotient of) \'etale fundamental group of some scheme $X$, ending up with a purely group-theoretic reconstruction $G \mapsto X(G)$ of a scheme $X(G)$ isomorphic to $X$. \par 
A natural way to generalise Theorem 1.1 is to replace $G_K^{\text{sol}}$ by $G_{K,S}$ for suitable set of primes $S \subset \mathscr{P}_K$, which was proven by Shimizu in \cite{Shi1} and \cite{Shi2}:
\begin{thm}[Shimizu, \cite{Shi2} Theorem 2.4]
Let $K,L$ be number fields, and let $S_K,S_L$ be set of primes of $K,L$ respectively. Assume that the following conditions hold: \par 
\begin{itemize}
    \item[(1)] The set of prime numbers determined by $S_K$ (resp. $S_L$) has cardinality at least $2$.
    \item[(2)] For any finite Galois extension $K_0/K$ or $L_0/L$ contained in $K_{S_K}$ or $L_{S_L}$, the natural density of the intersection between the set of prime numbers determined by $S_K$ or $S_L$ and the set of prime numbers splits completely in $K_0$ or $L_0$ is non-zero.
    \item[(3)] Assume condition (2). There exists a prime number $\ell$ in the intersection of the set of prime numbers determined by $S_K$ and the set of prime numbers determined by $S_L$, $S_L$ or $S_K$ satisfying $\star_{\ell}$ condition respectively (c.f. Definition 1.16 in \cite{Shi1}).
\end{itemize}
Then the natural map
$$
\text{Isom}(K_{S_K}/K,L_{S_L}/L) \to \text{Isom}(G_{L,S_{L}},G_{K,S_K})
$$
is a bijection.
\end{thm}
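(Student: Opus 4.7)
The plan is to adapt the classical Neukirch-Uchida strategy to the restricted-ramification context. Injectivity of the displayed map is essentially formal, since the action of $K_{S_K}$-automorphisms on the Galois group is faithful, so the substance lies in surjectivity: given a group isomorphism $\sigma : G_{L,S_L} \xrightarrow{\sim} G_{K,S_K}$, one must produce a Galois-equivariant field isomorphism $\tilde\sigma : K_{S_K} \xrightarrow{\sim} L_{S_L}$ whose restriction to $K$ lands inside $L$. I would organize the argument into three stages: recover decomposition groups and thereby induce local field isomorphisms; globalize to an isomorphism of multiplicative monoids; and finally reconstruct the additive structure in the manner of Uchida.

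For the local stage, the task is to characterize, purely group-theoretically inside $G_{K,S_K}$, the decomposition subgroups at primes $\mathfrak{p} \in (S_K)_f$. Classically such subgroups are pinned down by a combination of maximality and cohomological invariants (cohomological dimension, Euler characteristic, structure of pro-$\ell$ quotients). In the restricted-ramification setting, condition (3) --- the existence of a common prime $\ell$ satisfying $\star_\ell$ --- is precisely what ensures that a group-theoretic characterization of decomposition groups succeeds at primes of residue characteristic $\ell$. This produces a bijection between sufficiently many primes in $S_K$ and in $S_L$ together with isomorphisms of the associated completions, and local class field theory converts these into compatible isomorphisms of local unit groups.

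For the global stage, condition (2) is what makes Chebotarev-style density arguments available within the restricted framework: the set of primes splitting completely in a given finite Galois subextension retains positive density among the primes determined by $S_K$, so global equalities can be tested against sufficiently many visible primes. One thereby upgrades the compatible family of local isomorphisms into an isomorphism of the multiplicative monoids of $\mathcal{O}_{K,S_K}$ and $\mathcal{O}_{L,S_L}$ equipped with their valuation data. Finally, following Uchida, the additive structure is recovered from the multiplicative one by exploiting the Galois-equivariant cyclotomic character together with the local-global compatibilities already in place; Galois-equivariant extension then delivers the required $K_{S_K} \xrightarrow{\sim} L_{S_L}$.

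The principal obstacle is the first stage. Unlike the absolute Galois group setting, $G_{K,S_K}$ does not see primes outside $S_K$, so much of the global rigidity exploited by Neukirch-Uchida is not available a priori. The three hypotheses of the theorem are calibrated to restore the missing rigidity: condition (1) ensures that at least two residue characteristics are present, ruling out degenerate pro-$\ell$ phenomena; condition (2) salvages Chebotarev; and condition (3) supplies the arithmetic prime $\ell$ on which the group-theoretic detection of decomposition groups can be anchored. The bulk of the technical work goes into verifying that these conditions together force the Neukirch-type detection to succeed and that the detection is canonical enough to be matched compatibly across $\sigma$.
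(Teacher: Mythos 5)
This statement is imported by the paper from Shimizu's work (\cite{Shi2}, Theorem 2.4) and is not proved anywhere in the paper, so there is no internal proof to compare your attempt against; your proposal must therefore stand on its own. As written it is a strategy outline rather than a proof: you correctly identify the Neukirch--Uchida skeleton (detect decomposition groups group-theoretically, globalize via class field theory, recover the additive structure following Uchida), and you correctly match each hypothesis to the stage where it should be used, but every load-bearing step is asserted rather than carried out, and you acknowledge as much when you write that ``the bulk of the technical work goes into verifying that these conditions together force the Neukirch-type detection to succeed.''

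Three gaps are concrete enough to name. First, the group-theoretic characterization of decomposition groups at primes of $(S_K)_f$ inside $G_{K,S_K}$ is the heart of the theorem, and the classical criteria (Brauer groups, cohomological dimension, Neukirch's local--global detection) all presuppose access to the full absolute Galois group; saying that condition (3) ``is precisely what ensures that a group-theoretic characterization succeeds'' restates the goal without proving it, and in general these subgroups need not even be full, which is exactly why the present paper confines itself to $\delta(S)=1$. Second, your global stage presumes one can upgrade compatible local isomorphisms to an isomorphism of the multiplicative monoids of $\mathcal{O}_{K,S_K}$ and $\mathcal{O}_{L,S_L}$; but the paper explicitly remarks at the start of Section 3 that the $S$-unit group cannot be recovered directly from $G_{K,S}$, so this step is a known obstruction and needs a genuine workaround (e.g.\ a Kummer-theoretic comparison of the two sides that never isolates either unit group), not a one-sentence transition. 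Third, Uchida's recovery of the additive structure tests global elements against essentially all places, whereas here only the primes in $S_K$ are visible; you would need to show that condition (2) supplies enough completely split primes of the relevant residue characteristics for the evaluation arguments to separate elements, and this verification is absent.
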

The main result of this paper is to develop a mono-anabelian version of Theorem 1.3 in the density 1 case (since conditions (1),(2),(3) in Theorem 1.3 holds true automatically in density $1$):
\begin{thm}[c.f. Theorem 5.6]
Let $G$ be an abstract profinite group, isomorphic to $G_{K,S}$ for some number field $K$ and $S \subset \mathscr{P}_K$ satisfying the following conditions: \par 
\begin{itemize}
    \item $\delta(S) = 1$.
    \item $\mathscr{P}_K^{\text{inf}}$ and $\mathscr{P}_K^{\bullet}$ are contained in $S$. 
    \item $S$ is conjugate-stable, i.e. if $\mathfrak{p} \in \mathscr{P}_K$ lies in $S$, so does all conjugates of $\mathfrak{p}$. 
\end{itemize}
Then there exists a group-theoretic reconstruction of a field $F_S(G)$ together with the action of $G$ on $F_S(G)$ by automorphism, and the fixed subfield $F(G) := F_S(G)^G$ with Galois group $\text{Gal}(F_S(G)/F(G)) \xrightarrow{\sim} G$, such that the following diagram commutes:
$$
\begin{tikzcd}
F_S(G) \arrow[r,"\sim"] & K_S \\
F(G) \arrow[r,"\sim"] \arrow[u,hookrightarrow] & K \arrow[u,hookrightarrow]
\end{tikzcd}
$$
where the top horizontal arrow is an isomorphism equivariant w.r.t $G \xrightarrow{\sim} G_{K,S}$, the vertical arrows are field embeddings and the bottom arrow is an isomorphism.
\end{thm}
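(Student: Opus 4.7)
The plan is to reduce the reconstruction of $K_S$ to Hoshi's mono-anabelian reconstruction of $K$ (Theorem 1.2), applied to the maximal pro-solvable quotient of $G$, and then to lift the result from $K_S^{\text{sol}} := K_S \cap K^{\text{sol}}$ to the full $K_S$ by a direct limit over open subgroups.

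\emph{Step 1 (Reconstruct $K$).} I would first form the maximal pro-solvable quotient $G^{\text{sol}}$ of $G$, which is purely group-theoretic. Under the isomorphism $G \cong G_{K,S}$ one has $G^{\text{sol}} \cong \text{Gal}(K_S^{\text{sol}}/K)$. In general $G^{\text{sol}}$ is a \emph{proper} quotient of $G_K^{\text{sol}}$ (for instance, if $K = \mathbb{Q}$ and $7 \notin S$, then $\mathbb{Q}(\zeta_7) \not\subset K_S^{\text{sol}}$), so Theorem 1.2 does not apply as a black box. However, the three hypotheses on $S$ are tailored to allow a faithful adaptation of Hoshi's algorithm: density $1$ permits every use of Chebotarev's theorem to be replaced by its restriction to primes in $S$; the inclusion $\mathscr{P}_K^{\bullet} \subset S$ ensures that local reconstruction of decomposition and inertia subgroups at primes in $S$ goes through (in particular in the $2$-adic and wildly ramified cases); and the conjugate-stability of $S$ makes the resulting family of decomposition subgroups $G$-equivariant. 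Running the adapted algorithm yields a field $F(G) \cong K$ together with a Galois extension $\widetilde{F}(G) \cong K_S^{\text{sol}}$, each carrying the appropriate action.

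\emph{Step 2 (Lift to $K_S$).} For each open subgroup $U \leq G$, I would identify $U$ with $G_{L, S_L}$ where $L := K_S^U$ and $S_L$ is the pullback of $S$; the three hypotheses of the theorem transfer to the pair $(L, S_L)$ (density is preserved under pullback through a finite extension, the inclusion $\mathscr{P}_L^{\text{inf}} \cup \mathscr{P}_L^{\bullet} \subset S_L$ is immediate from the corresponding inclusion for $K$, and conjugate-stability is inherited). Applying Step 1 to $U$ produces a field $F(U) \cong L$, and the family $\{F(U)\}_U$ is compatible under inclusion of open subgroups by the functoriality of the reconstruction. Define $F_S(G) := \varinjlim_U F(U)$, equipped with the natural $G$-action induced by conjugation on the indexing subgroups. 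Then $F_S(G)^G = F(G) \cong K$, and the isomorphism $F_S(G) \xrightarrow{\sim} K_S$ is equivariant with respect to $G \xrightarrow{\sim} G_{K,S}$; the commutative diagram in the theorem then follows from the construction.

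\emph{Main obstacle.} The hard part is Step 1: verifying that each ingredient of Hoshi's algorithm (reconstruction of decomposition groups at primes, local reconstruction of $K_{\mathfrak{p}}$ together with its cyclotomic character, Kummer-theoretic reconstruction of $S$-units, and the recovery of the additive structure via Uchida's lemma) can be faithfully re-executed using only the quotient $G^{\text{sol}} = G_{K,S}^{\text{sol}}$ rather than the full $G_K^{\text{sol}}$. Every appeal to Chebotarev's theorem, every local step, and every local-global exact sequence in Hoshi's argument must be checked to go through using only primes in $S$. This is precisely where the density-$1$ hypothesis, the inclusion $\mathscr{P}_K^{\text{inf}} \cup \mathscr{P}_K^{\bullet} \subset S$, and the conjugate-stability of $S$ are consumed, and is the technical core of the preceding sections; once the adapted algorithm is established, Step 2 reduces to a formal limit argument.
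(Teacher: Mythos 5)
Your proposal correctly observes that Hoshi's Theorem 1.2 cannot be applied as a black box to $G^{\text{sol}} \cong \text{Gal}(K_S^{\text{sol}}/K)$, but it then defers the entire content of the theorem to an unexecuted ``faithful adaptation of Hoshi's algorithm,'' and this adaptation does not in fact go through in the form you describe. The concrete failure point is the Kummer-theoretic step: over $G_{K,S}$ one only has $H^1(G_{K,S},\mu_n)$ for $n \in \mathbb N(S)$, and what this recovers is not $\mathcal{O}_{K,S}^{\times}$ (let alone $K^{\times}$) but only a strictly larger ``container'' $\mathcal{H}^{\times}(K,S)$ sitting in an extension $1 \to (\mathcal{O}_K^{\times})^{\Sigma} \to \mathcal{H}^{\times}(K,S) \to \mathcal{O}_{K,S}^{\times}/\mathcal{O}_K^{\times} \to 1$; the $S$-unit group itself is not group-theoretically accessible inside it (this is stated explicitly in the paper at the start of Section 3). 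Since Hoshi's passage from Galois theory to the field --- multiplicative group first, then additive structure via evaluation at primes --- hinges on actually pinning down the relevant unit/multiplicative groups, your Step 1 stalls exactly at the point you label ``the main obstacle,'' and the hypotheses on $S$ do not by themselves rescue it.

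The paper's actual route is structurally different and you should compare against it. Rather than reconstructing $K$ from $G^{\text{sol}}$ globally, the paper (a) uses the container $\mathcal{H}^{\times}(K,S)$ only to build the tower of radical extensions recovering $\mathbb{Q}_{\Sigma}^{\text{sol}}$ (Lemma 4.2--Proposition 4.7), a much smaller field than $K_S^{\text{sol}}$; (b) exploits the fact that the decomposition groups $D \in \widetilde{S}(G)$ are \emph{full} absolute Galois groups of $p$-adic fields (Remark 2.3, via Theorem 9.4.3 of \cite{NSW}) to reconstruct the entire local algebraic closure $\bar k(D) \cong \overline{\mathbb{Q}_{p(D)}}$ with its field structure by Hoshi's local theory; (c) characterises $K \subset K_S$ as the unique ``$S(G)$-standard pair'' of subfields $F[D] \subset F_S[D] \subset \bar k(D)$ (Definition 5.2, Proposition 5.4); and (d) glues the local copies $F_S[D]$ into a global field $F_S(G) \subset \prod_D F_S[D]$ using the canonical isomorphisms $\gamma_{E,D}$ supplied by Shimizu's \emph{bi-anabelian} Theorem 2.4 of \cite{Shi2}. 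That last ingredient is used in an essential, non-eliminable way (Remark 5.7), whereas your proposal never invokes it; both the uniqueness of the standard pair and the compatibility of the local copies --- which your Step 2 silently assumes under the word ``functoriality'' --- are exactly what Shimizu's theorem (together with a Neukirch--Uchida rigidity argument) provides. Without supplying substitutes for (a)--(d), the proposal is a plan rather than a proof.
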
 
\textbf{Structure of this paper}: \par 
In section 2, we use the group-theoretic reconstruction of decomposition groups at primes in $S$ as proved in Proposition 2.1 in \cite{Shi1}, we apply Theorem 1.4 in \cite{Ho1} to recover the local invariants at primes in $S$. \par 
In section 3, we use the local reconstructions established in section 2 to recover $\mu(K_S)$ as a $G_{K,S}$-module, hence also the $G_{K,S}$-module $\Lambda(K_S)$. Furthermore, we construct a suitable container $\mathcal{H}^{\times}(K,S)$ to contain $\mathcal{O}_{K,S}^{\times}$ as a subgroup. Notice that we are unable to recover the $S$-unit group directly from $G_{K,S}$. \par 
In section 4, we recover the maximal pro-solvable and unramified outside $\Sigma$ extension of $\mathbb Q$ group-theoretically starting from $G_{K,S}$, hence also the profinite group $G_{\mathbb Q,\Sigma}^{\text{sol}}$. Also, we recover the set of decomposition groups of $G_{\mathbb Q,\Sigma}^{\text{sol}}$ at primes lying above $\Sigma$. \par 
In section 5, we finish the reconstruction algorithm by characterising suitable subfields of the algebraic closure of completions of $K$ at primes in $S$. In this step, the full-ness of decomposition groups at primes lying above $S$ is crucial. Finally, by applying Theorem 2.4 in \cite{Shi2}, we obtain natural isomorphisms between those subfields, and recover a global copy which is isomorphic to $K_S$.  \par 
We follow the strategy in \cite{Ho1} and \cite{Ho2}, but technically the proof of Theorem 1.4 in quite different from \cite{Ho2}. \par 
\textbf{Further aspects}:\par 
In \cite{CC} Remarque 5.3 (i), it was shown that for totally real fields $K$, if the set of prime numbers determined by $S$ has cardinality at least $2$, then decomposition groups of $G_{K,S}$ at primes above $S$ are full, hence one of the obstruction in Remark 5.8 is not an issue. In this case, we may consider proving a positive density version of Theorem 1.4 for totally real fields.
\section{Reconstruction of Local Invariants}
In this section, we develop a group-theoretic reconstruction of local invariants as in Section 1 in \cite{Ho1}. 
\begin{defn}
Let $G$ be a profinite group. We say that $G$ is of NF-type with restricted ramification if there exists a collection of data as follows: \par 
(i) A number field $K$. \par
(ii) A subset $S \subset \mathscr{P}_K$ such that $\mathscr{P}_K^{\text{inf}} \subset S$ and $\mathscr{P}_K^{\bullet} \subset S$ where $\mathscr{P}_K^{\bullet} := \{\mathfrak{p} \in \mathscr{P}_K^{\text{fin}}:\text{char}(\mathfrak{p} )=2~\text{or}~e_{\mathfrak{p}} > 1\}$. \par
(iii) The set $S$ is conjugate-stable, that is for any prime number $\ell \in \mathfrak{Primes}$, if $\mathfrak{p} \in S$ is lying above $\ell$, then all primes of $k$ lying above $\ell$ are also belong to $S$. \par 
(iv) The maximal unramified outside $S$ extension $K_S$ of $K$. \par
(v) An isomorphism $\alpha:G \xrightarrow{\sim}G_{K,S} = \text{Gal}(K_S/K)$. \par 
If $\delta(S) = 1$, we we say that $G$ is of NF-type with density $1$ restricted ramification.
\end{defn}
One checks immediately that open subgroups of profinite groups of NF-type with density 1 restricted ramification are again profinite groups of NF-type with density 1 restricted ramification.
\begin{prop}
Let $G$ be a profinite group of NF-type with density 1 restricted ramification. Then there is a group-theoretic reconstruction of the $G$-set
$$
\widetilde{S}(G)
$$
such that there is an $\alpha$-equivariant bijection
$$
\widetilde{S}(G) \xrightarrow{\sim} \{D_{\bar{\mathfrak{p}}} \subset G_{K,S}: \bar{\mathfrak{p}} \in S_f(K_S)\}.
$$
Moreover, we shall write $S(G) := \widetilde{S}(G)/G$. In this case, the following diagram commutes
$$
\begin{tikzcd}
\widetilde{S}(G) \arrow[r,"\sim"] \arrow[d,twoheadrightarrow] & S_f(K_S) \arrow[d,twoheadrightarrow] \\
S(G) \arrow[r,"\sim"] & S.
\end{tikzcd}
$$
\end{prop}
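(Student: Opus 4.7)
The plan is to invoke the group-theoretic characterization of decomposition subgroups provided by Proposition 2.1 in \cite{Shi1}. That result, applied to $G$ viewed as $G_{K,S}$ via $\alpha$, produces a distinguished family of closed subgroups of $G$ that is characterized purely in terms of the abstract profinite group structure, and that coincides with the family $\{D_{\bar{\mathfrak{p}}} \subset G_{K,S} : \bar{\mathfrak{p}} \in S_f(K_S)\}$. I would therefore define $\widetilde{S}(G)$ to be this set of subgroups, equipped with its natural $G$-action by conjugation. The density~$1$ assumption on $S$ is what lets us invoke Shimizu's criterion without checking the auxiliary hypotheses (1)--(3) of Theorem 1.3, since those become automatic in the density~$1$ regime.

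Granted this, the $\alpha$-equivariant bijection $\widetilde{S}(G) \xrightarrow{\sim} \{D_{\bar{\mathfrak{p}}}\}$ is essentially tautological: the left-hand action is conjugation in $G$, the right-hand action is conjugation by $G_{K,S}$, and these correspond under $\alpha$; on the decomposition side, conjugation by $g \in G_{K,S}$ sends $D_{\bar{\mathfrak{p}}}$ to $D_{g\bar{\mathfrak{p}}}$, so the identification is $G$-equivariant.

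For the commutative square, I would pass to orbits. By definition $S(G) := \widetilde{S}(G)/G$, and on the right, two primes $\bar{\mathfrak{p}}, \bar{\mathfrak{q}} \in S_f(K_S)$ restrict to the same $\mathfrak{p} \in S_f$ precisely when they are conjugate under $G_{K,S}$, which is equivalent to the corresponding decomposition subgroups being conjugate. Hence the horizontal quotient map descends to a bijection $S(G) \xrightarrow{\sim} S_f \subset S$, and the required diagram commutes by construction.

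The only substantive obstacle is confirming that Proposition 2.1 of \cite{Shi1} produces a characterization whose input depends solely on the abstract group $G$ (and not on the chosen isomorphism $\alpha$), so that $\widetilde{S}(G)$ is genuinely a group-theoretic invariant rather than a definition depending on $K$ and $S$; once this mono-anabelian reading of Shimizu's criterion is in hand, the present proposition follows with no further work.
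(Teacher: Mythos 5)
Your proposal follows essentially the same route as the paper: the paper's proof is a one-line citation of Proposition 1.20 and Proposition 2.1 in \cite{Shi1}, exactly the reference you lean on, and the equivariance and orbit-level statements are handled tautologically just as you describe. The one obstacle you leave open --- whether the characterization of decomposition subgroups depends only on the abstract group $G$ and not on $\alpha$ --- is closed in the paper by the alternative argument it records: combining Corollary 2.7~(ii) in \cite{Ivanov} with Theorem 9.4.3 in \cite{NSW} (and the proof of Theorem 12.1.9 there), the decomposition subgroups at primes in $S_f(K_S)$ are precisely the \emph{maximal closed subgroups of MLF-type} of $G$, a manifestly group-theoretic condition (Definition 1.1 in \cite{Ho1}); note also that fullness of these subgroups, recorded in Remark 2.3, is what makes the MLF-type characterization available here. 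Your observation that the orbit map lands in $S_f$ rather than all of $S$ is correct and in fact slightly more careful than the diagram as printed.
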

\begin{proof}
This follows from Proposition 1.20 and Proposition 2.1 in \cite{Shi1}. Or alternatively, one combines Corollary 2.7 (ii) in \cite{Ivanov} and Theorem 9.4.3 in \cite{NSW}, and arguments in the proof of Theorem 12.1.9 in \cite{NSW}, one can conclude that the set of decomposition subgroups of $G_{K,S}$ at primes in $S_f(K_S)$ are maximal closed subgroups of MLF-type (c.f. Definition 1.1 in \cite{Ho1}).

\end{proof}
\begin{rem}
By Theorem 9.4.3 in \cite{NSW}, it holds that elements in $\{D_{\bar{\mathfrak{p}}} \subset G_{K,S}: \bar{\mathfrak{p}} \in S^{\text{fin}}(K_S)\}$ are full, i.e. are absolute Galois groups of $p$-adic local fields for suitable $p$.
\end{rem}
\begin{thm}
Let $G$ be a profinite group of NF-type with density 1 restricted ramification. Let $G_{K_{\mathfrak{p}}}\xleftarrow{\sim}D \in \widetilde{S}(G)$. There is a group-theoretic reconstruction of the following objects from $D$: \par 
(i) The residue characteristic of $K_{\mathfrak{p}}$, denoted by $p(D)$.\par 
(ii) The degree $[K_{\mathfrak{p}}:\mathbb{Q}_{p(D)}]$, denoted by $d(D)$. \par 
(iii) The inertia degree of $K_{\mathfrak{p}}$, denoted by $f(D)$. \par 
(iv) The absolute ramification index of $K_{\mathfrak{p}}$, denoted by $e(D)$. \par 
(v) The inertia subgroup of $G_{K_{\mathfrak{p}}}$, denoted by $I(D)$. \par 
(vi) The wild inertia subgroup of $G_{K_{\mathfrak{p}}}$, dentoed by $W(D)$. \par 
(vii) The Frobenius element of $G_{K_{\mathfrak{p}}}^{\text{unr}}$, denoted by $\text{Frob}(D)$. \par 
(viii) The unit group $\mathcal{O}_{K_{\mathfrak{p}}}^{\times}$ of $K_{\mathfrak{p}}$, denoted by $\mathcal{O}^{\times}(D)$. \par 
(ix) The multiplicative group $K_{\mathfrak{p}}^{\times}$, denoted by $k^{\times}(D)$. \par 
(x) The group of roots of unity contained in $K_{\mathfrak{p}}^{\text{sep}}$, denoted by $\mu(D)$. \par 
(xi) The local cyclotome $\Lambda(K_{\mathfrak{p}}^{\text{sep}})$, denoted by $\Lambda(D)$.
\end{thm}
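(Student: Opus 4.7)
The plan is to reduce the reconstruction of every invariant in (i)--(xi) directly to Hoshi's mono-anabelian recipes for absolute Galois groups of $p$-adic local fields, namely Theorem 1.4 of \cite{Ho1}. The crucial input is that every decomposition group $D \in \widetilde{S}(G)$ is \emph{full}, i.e.\ isomorphic as an abstract profinite group to $G_{K_\mathfrak{p}}$ itself, and not merely to a closed subgroup thereof. This fullness is recorded in Remark 2.3 and follows from Theorem 9.4.3 of \cite{NSW} together with the density-1 hypothesis on $S$. Because Hoshi's local algorithms depend only on the abstract isomorphism class of an MLF-type profinite group, fullness lets us invoke them verbatim on each such $D$.

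Once fullness is available, the recipe for each invariant is as follows. The residue characteristic $p(D)$ is the unique prime $p$ for which $\mathrm{cd}_p(D) = 2$. The degree $d(D)$ is read off from the torsion-free $\mathbb{Z}_{p(D)}$-rank of $D^{\mathrm{ab}}$ via the relation $\dim_{\mathbb{Q}_{p(D)}}(D^{\mathrm{ab}} \otimes_{\mathbb{Z}_{p(D)}} \mathbb{Q}_{p(D)}) = d(D) + 1$ coming from local class field theory. The inertia subgroup $I(D)$ is characterized as the kernel of the maximal torsion-free procyclic (equivalently, maximal unramified) quotient of $D$; its unique pro-$p(D)$ Sylow is $W(D)$. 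The inertia degree $f(D)$ and ramification index $e(D)$ are then recovered from the structure of the tame quotient $I(D)/W(D)$ together with the identity $d(D) = e(D) f(D)$. The Frobenius element $\mathrm{Frob}(D)$ is singled out in $D/I(D) \cong \widehat{\mathbb{Z}}$ as the unique topological generator acting on $\mu(D)$ by raising to the $p(D)^{f(D)}$-th power. Finally, $\mathcal{O}^\times(D)$ and $k^\times(D)$ are recovered from $D^{\mathrm{ab}}$ through local class field theory, with the valuation filtration reconstructed from the Frobenius-fixed part; $\mu(D)$ is the torsion subgroup of $k^\times(D)$, and $\Lambda(D) := \varprojlim_n \mu(D)[n]$.

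The main point, rather than any genuine obstacle, is to verify that each of the above recipes is intrinsic to the abstract profinite group $D$; this is precisely the content of Theorem 1.4 of \cite{Ho1}. Thus the proof reduces to citing that theorem for each of (i)--(xi) in turn. The only care required is the preliminary verification that our $D$ lies in the class of MLF-type groups treated by Hoshi, which is exactly the fullness statement from Remark 2.3.
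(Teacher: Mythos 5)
Your proposal is essentially identical to the paper's proof, which likewise consists of citing Theorem 1.4 of \cite{Ho1} together with Remark 2.3 to guarantee that each $D \in \widetilde{S}(G)$ is full, i.e.\ genuinely the absolute Galois group of a $p$-adic local field, so that Hoshi's local algorithms apply verbatim. One small inaccuracy in your expository sketch: $p(D)$ cannot be singled out as ``the unique prime with $\mathrm{cd}_p(D)=2$'' since $\mathrm{cd}_\ell(G_{K_\mathfrak{p}})=2$ for \emph{every} prime $\ell$; the correct intrinsic characterization (and the one Hoshi uses) is via the $\mathbb{Z}_\ell$-ranks of $D^{\mathrm{ab}}$ --- but since your argument ultimately defers to Hoshi's theorem rather than to this sketch, the proof stands.
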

\begin{proof}
This follows immediately from Theorem 1.4 in \cite{Ho1} (together with Remark 2.3).
\end{proof}
\section{Synchronisation of $\Sigma$-Cyclotomes}
In this section, we develop a version of local-global sychronisation like Theorem 3.8 in our context. In particular, we are not able to recover the full cyclotome, but we can recover the $\Sigma$-part of the global cyclotome for the set of prime numbers $\Sigma$ determined by $S$. \par 
\begin{defn}
Let $G$ be a profinite group of NF-type with density 1 restricted ramification. Let $v \in S(G)$ (c.f. Proposition 2.2). For each $D \in \widetilde{S}(G)$ lying above $v$, we define
\begin{itemize}
    \item $p_v:= p(D)$, $d_v:= d(D)$, $f_v := f(D)$ and $e_v := e(D)$.
    \item $k^{\times}(v) := k^{\times}(D)$.
    \item $\mathcal{O}^{\times}(v) := \mathcal{O}^{\times}(D)$.
\end{itemize}
In particular, $\mathcal{O}^{\times}(v)$ and $k^{\times}(v)$ are independent from the choice of $D$ lying above $v$ (c.f. Proposition 3.7 in \cite{Ho1}). 
\end{defn}
\begin{defn}
Let $G$ be a profinite group of NF-type with density 1 restricted ramification. We write
$$
\Sigma(G) := \{p \in \mathfrak{Primes}: \exists v \in S~s.t.~p_v=p\}.
$$
\end{defn}
\begin{defn}
Let $K$ be a number field, and let $S \subset \mathscr{P}_K$ be a subset with $\delta(S) = 1$ and $\mathscr{P}_K^{\text{inf}} \subset S$. We write
$$
\mathcal{I}(K,S) := \bigcup_{S'}(\prod_{v \in S'} k^{\times}(v) \times \prod_{v \in S_f\setminus S'} \mathcal{O}^{\times}(v))
$$
where $S'$ ranges over all finite subsets of $S_f$.
\end{defn}
\begin{lem}
Let $K$ be a number field, and let $\mathscr{P}_K^{\text{inf}} \subset S \subset \mathscr{P}_K$ be a subset with $\delta(S) = 1$. Then the following assertions hold true: \par
(i) There exists a natural map
$$
\mathcal{I}(K,S) \to G_{K,S}^{\text{ab}}
$$
obtained by forming the composite
$$
\mathcal{I}(K,S) \hookrightarrow \mathbb I_K^{\text{fin}} \hookrightarrow \mathbb I_K \twoheadrightarrow C_K \twoheadrightarrow G_K^{\text{ab}} \twoheadrightarrow G_{K,S}^{\text{ab}}
$$
which coincide with the natural map $\mathcal{I}(K,S) \to G_{K,S}^{\text{ab}}$ obtained by forming the product of local Artin reciprocity maps $\theta_{\mathfrak{p}}: K_{\mathfrak{p}}^{\times} \to G_{K_{\mathfrak{p}}}^{\text{ab}}$ for each $\mathfrak{p} \in S_f$. \par
(ii) It holds that
$$
\text{ker}(\mathcal{I}(K,S)\to G_{K,S}^{\text{ab}})_{\text{tor}} \subset \mu(K).
$$
Moreover, if $K$ is totally imaginary, then
$$
\text{ker}(\mathcal{I}(K,S) \to G_{K,S}^{\text{ab}})_{\text{tor}} = \mu(K).
$$
(iii) It holds that
$$
\mu(K_S) = \varinjlim_L ~\text{ker}(\mathcal{I}(L,S) \to G_{K,S}^{\text{ab}})_{\text{tor}}
$$
where $L$ ranges over all finite extensions of $K$ contained in $K_S$, and the transition maps are induced by transfer maps (c.f. Proposition 1.5.9 in \cite{NSW}). In particular, there is a canonical identification
$$
\varinjlim_L~\text{ker}(\mathcal{I}(K,S) \to G_{K,S}^{\text{ab}})_{\text{tor}} \xrightarrow{\sim} \bigoplus_{\ell \in \Sigma} \mathbb{Q}_{\ell}/\mathbb{Z}_{\ell} (1).
$$
\end{lem}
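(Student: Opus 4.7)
The approach is to realise $G_{K,S}^{\text{ab}}$ via global class field theory as the explicit quotient $\mathbb I_K / \overline{K^{\times} \cdot U_S \cdot D_K}$, where $U_S := \prod_{v \notin S} \mathcal O_{K_v}^{\times}$ and $D_K$ denotes the connected component of the identity in $\mathbb I_K$, and then to analyse the kernel of the composite $\mathcal I(K,S) \hookrightarrow \mathbb I_K \twoheadrightarrow G_{K,S}^{\text{ab}}$ directly. For (i), the composite is well-defined because an $(x_v) \in \mathcal I(K,S)$, extended by $1$ outside $S_f$ to give a finite idele, has trivial components at archimedean places and at $v \notin S$, so passing through the quotient $G_K^{\text{ab}} \twoheadrightarrow G_{K,S}^{\text{ab}}$ (which kills inertia outside $S$) introduces no obstruction; and its coincidence with the product of local Artin maps $\theta_{\mathfrak p}$ is the standard local-global compatibility of the reciprocity maps (Chapter VIII of \cite{NSW}).

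For (ii), the kernel equals $\mathcal I(K,S) \cap \overline{K^{\times} U_S D_K}$. I will first ignore the closure and analyse an element $(x_v) = \alpha u d \in \mathcal I(K,S)$ with $\alpha \in K^{\times}$, $u \in U_S$, $d \in D_K$. Comparing components at each finite $v \notin S$ forces $\alpha = u_v^{-1} \in \mathcal O_{K_v}^{\times}$, so $\alpha \in \mathcal O_{K,S}^{\times}$; at each $v \in S_f$ one reads $x_v = \alpha$; at archimedean $v$, $\alpha d_v = 1$ forces $d_v = \alpha^{-1}$, which is automatic at complex places but requires $\alpha$ to be positive at real ones. Hence a torsion $(x_v)$ corresponds to a torsion $S$-unit, i.e.\ to an element of $\mu(K)$, giving $\text{ker}(\cdot)_{\text{tor}} \subset \mu(K)$. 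The closure issue will be dispatched by a compactness argument: each $\mathcal I(K,S)[n]$ is a direct sum of the finite groups $\mu_n(K_v)$, and the algebraic description above pins down its $n$-torsion intersection with the closure without loss. For totally imaginary $K$, every $\zeta \in \mu(K)$ is realised by taking $\alpha = \zeta$, $u_v = \zeta^{-1}$ at each finite $v \notin S$, and $d_v = \zeta^{-1}$ at each complex archimedean $v$ (allowed since the connected component at a complex place is all of $\mathbb C^{\times}$); no sign obstruction arises, yielding the claimed equality.

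For (iii), I will apply (ii) at every finite subextension $L/K$ of $K_S$. The totally imaginary such $L$ are cofinal, since $L(\sqrt{-1})/L$ is ramified only at the prime~$2$ and at real archimedean primes, both contained in $S$ by hypothesis ($\mathscr P_K^{\bullet}, \mathscr P_K^{\text{inf}} \subset S$), so $L(\sqrt{-1}) \subset K_S$. The transition maps, induced by the transfer (Verlagerung), restrict on torsion kernels to the natural inclusions $\mu(L) \hookrightarrow \mu(L')$, by compatibility of transfer with local reciprocity (Proposition 1.5.9 in \cite{NSW}) and the explicit realisation from (ii); taking the direct limit then yields $\bigcup_L \mu(L) = \mu(K_S)$. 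Finally, $\zeta_{\ell^n} \in K_S$ iff $K(\zeta_{\ell^n})/K$ is unramified outside $S$; since this extension is ramified precisely at primes of $K$ above $\ell$, and $S$ is conjugate-stable, this holds for all $n$ iff $\ell \in \Sigma$, yielding the $\Sigma$-primary identification $\mu(K_S) \cong \bigoplus_{\ell \in \Sigma} \mathbb Q_{\ell}/\mathbb Z_{\ell}(1)$. The main technical obstacle throughout will be handling the topological closure in (ii); the compactness argument above is precisely what allows torsion elements of the closure to be captured by the algebraic description.
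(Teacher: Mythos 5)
Your treatment of (i) and (iii) is essentially the paper's: (i) is just the compatibility of local and global reciprocity, and (iii) reduces to (ii) via the cofinality of totally imaginary subextensions (obtained by adjoining $\sqrt{-1}$, which is legitimate since $2$ and the archimedean primes lie in $S$). The problem is (ii), and specifically the point you yourself flag as ``the main technical obstacle'': the topological closure. Your first-pass analysis of an element of $K^{\times}U_S D_K$ (no closure) is fine, but the closure is not a technicality to be ``dispatched by compactness'' --- it is where all the content of the statement lives. The closure of $K^{\times}U_S D_K$ in $\mathbb{I}_K$ contains, in addition to what you analyse, all $\widehat{\mathbb{Z}}$-powers $\prod_i \epsilon_i^{\lambda_i}$ of global units $\epsilon_i\in\mathcal{O}_K^{\times}$, $\lambda_i\in\widehat{\mathbb{Z}}$ (these powers make sense componentwise in the compact groups $U_{\mathfrak{p}}$); this is the explicit description of the connected component of $C_K$ in Theorem 8.2.5 of \cite{NSW}, which the paper invokes. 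A torsion element of $\mathcal{I}(K,S)$ lying in the closure is therefore of the form $x\beta\prod_i\epsilon_i^{\lambda_i}$ with $\lambda_i\in\widehat{\mathbb{Z}}$, and one must show the $\lambda_i$ are forced to be ordinary integers. Your compactness remark does not do this: knowing that $\mathcal{I}(K,S)[n]$ is compact (it is a direct product $\prod_{v\in S_f}\mu_n(K_v)$, not a direct sum, and in particular infinite) says nothing about which of these $\widehat{\mathbb{Z}}$-power classes it meets.

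The missing step, and the only place the hypothesis $\delta(S)=1$ is used in (ii), is the $\widehat{\mathbb{Z}}$-linear independence of a system of fundamental units viewed inside $\prod_{\mathfrak{p}\in S_f}U_{\mathfrak{p}}$ (the paper cites the proof of Lemma 8.2.4 in \cite{NSW} for this). Granting that independence, the torsion relation $\prod_i\epsilon_i^{d\lambda_i+n_i}=1$ forces $\lambda_i\in\mathbb{Z}$ and hence pushes the element into $K^{\times}$, after which your algebraic analysis applies. Your proposal never invokes $\delta(S)=1$ in part (ii) at all, which is a structural warning sign: the restriction of the units to a \emph{small} set of places (e.g.\ only the places above a single rational prime) can satisfy nontrivial $\widehat{\mathbb{Z}}$-relations, so the argument genuinely needs many places. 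To repair the proof you should replace the compactness remark by: (a) the explicit description of the kernel of $C_K\twoheadrightarrow G_K^{\mathrm{ab}}$ from \cite{NSW} Theorem 8.2.5, and (b) the $\widehat{\mathbb{Z}}$-independence of global units in $\prod_{\mathfrak{p}\in S_f}U_{\mathfrak{p}}$, deduced from $\delta(S)=1$.
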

\begin{proof}
Assertion (i) follows immediately from global class field theory. Now we verify assertion (ii). \par 
Let $a \in \text{ker}(\mathcal{I}(K,S) \to G_{K,S}^{\text{ab}})_{\text{tor}}$. If the image of $a$ in $G_K^{\text{ab}}$ is trivial, then it follows immediately from Lemma 3.6 (iii) in \cite{Ho1} that $a \in K^{\times}$. Now suppose that the image of $a$ in $G_K^{\text{ab}}$ is non-trivial. By Theorem 8.3.13 of \cite{NSW}, there exists an idele $\gamma\in\mathbb{I}_{K}$ which is $1$ at the primes in $S$ and units at the primes not in $S$, such that $a\gamma$ has image in $D_{K}$, where $D_{K}$ is the kernel of the global reciprocity map $C_{K}\twoheadrightarrow G_{K}^{\text{ab}}$. By \cite{NSW} , Theorem 8.2.5, we can write 
$$a\gamma=x\cdot\beta\prod_{i}\epsilon_{i}^{\lambda_{i}},$$
where $\beta$ is a purely complex idele, $x\in K^{\times}$, $\epsilon_{i}\in\mathcal{O}_{K}^{\times}$ are viewed as ideles in $\mathbb{I}_{K}$ via the natural embedding $K^{\times}\to\mathbb{I}_{K}$, $\lambda_{i}\in\widehat{\mathbb{Z}}$. Then for each finite prime $\mathfrak{p}$ we have $x\in U_{\mathfrak{p}}$, therefore $x\in\mathcal{O}_{K}^{\times}$. The group generated by the $\epsilon_{i}$'s has finite index in $\mathcal{O}_{K}^{\times}$, hence there exists a positive integer $d$ such that $x^{d}=\prod_{i}\epsilon_{i}^{n_{i}}$ with $n_{i}\in\mathbb{Z}$. We can choose $d$ such that $a^{d}=1$. Hence for $\mathfrak{p}\in S$ we have
$$1=\alpha^{d}=x^{d}\prod_{i}\epsilon_{i}^{d\lambda_{i}}=\prod_{i}\epsilon_{i}^{d\lambda_{i}+n_{i}}.$$
The $\epsilon_{i}$'s, viewed as elements of $\prod_{\mathfrak{p}\in S^{\text{fin}}}U_{\mathfrak{p}}$, are $\widehat{\mathbb{Z}}$-independent, this follows from the proof of \cite{NSW}, Lemma 8.2.4 and our assumption that $\delta(S)=1$. Therefore we see $\lambda_{i}\in\mathbb{Z}$, and hence $a \in K^{\times}$. \par 
Now assume that $K$ is totally imaginary. Let $b \in K^{\times}$ be a torsion. We need to prove that the image of $b$ in $G_{K,S}^{\text{ab}}$ is trivial. $b$, viewed as as an element of $\mathbb{I}_{K}$ via the natural embedding $K^{\times}\to\mathbb{I}_{K}$, has image in $D_{K}$, and its difference with $b$, viewed as an element of $\mathcal{I}(K,S)$, is an idele with value $b$ at the components of infinite primes and finite primes not in $S$. Since $K$ is totally imaginary, the infinite primes are all complex. By Theorem 8.2.5 and Theorem 8.3.13 of \cite{NSW}, we see the image of $b$ in $G_{K,S}^{\text{ab}}$ is trivial. \par 
Assertion (iii) follows immediately from assertion (ii) together with the fact that for any finite extension $L/K$ contained in $K_S$, there exists some totally imaginary finite extension $L'/K$ contained in $K_S$ such that $L'$ contains $L$.
\end{proof}
\begin{defn}
Let $G$ be a profinite group of NF-type with density 1 restricted ramificaiton. We define
$$
\mathcal{I}(G,S(G)) := \bigcup_{S'} (\prod_{v \in S'} k^{\times}(v) \times \prod_{v \in S(G) \setminus S'} \mathcal{O}^{\times}(v))
$$
where $S' \subset S(G)$ ranges over all finite subsets of $S(G)$.
\end{defn}
\begin{rem}
It follows immediately from Theorem 2.4 that, for a profinite group $G$ of NF-type with density 1 restricted ramification, the isomorphism $\alpha$ induces an isomorphism
$$
\mathcal{I}(G,S(G)) \xrightarrow{\sim} \mathcal{I}(K,S).
$$
\end{rem}
\begin{defn}
Let $G$ be a profinite group of NF-type with density 1 restricted ramification. We define
$$
\mu_{\Sigma(G)}(G) := \varinjlim_H ~\text{ker}(\mathcal{I}(H,S(H)) \to H^{\text{ab}})_{\text{tor}}
$$
where the transition maps are determined by the transfer maps. Moreover, we define the $\Sigma(G)$-cyclotome associated to $G$ as
$$
\Lambda_{\Sigma(G)}(G) := \varprojlim_n ~\mu_{\Sigma(G)}(G)[n]
$$
where the inverse limit is taken over all positive integers $n$.
\end{defn}
\begin{thm}[Synchronisation of $\Sigma(G)$-cyclotomes]
Let $G$ be a profinite group of NF-type with density 1 restricted ramification. Let $D \in \widetilde{S}(G)$. Then the following assertions hold: \par 
(i) Let $H \subset G$ be an open subgroup, then the natural inclusion $H \hookrightarrow G$ induces the following $H$-equivariant isomorphisms
$$
\mu_{\Sigma(G)}(G) \xrightarrow{\sim} \mu_{\Sigma(H)}(H)~\text{and}~\Lambda_{\Sigma(G)}(G) \xrightarrow{\sim} \Lambda_{\Sigma(H)}(H).
$$
(ii) The natural surjection $\mathcal{I}(G,S(G)) \twoheadrightarrow k^{\times}(D)$ induces the following $D$-equivariant isomorphisms
$$
\mu_{\Sigma(G)}(G) \xrightarrow{\sim} \bigoplus_{\ell \in \Sigma(G)}\mu_{\ell^{\infty}}(D)~\text{and}~ \Lambda_{\Sigma(G)}(G) \xrightarrow{\sim} \prod_{\ell \in \Sigma(G)} \Lambda_\ell(D)
$$
where $\mu_{\ell^{\infty}}(D) := \varinjlim_n (\varinjlim_H k^{\times}(H))[\ell^n]$ for $H$ ranges over all open subgroups of $D$ and $\Lambda_\ell(D) := \varprojlim_n \mu_{\ell^{\infty}}(D)[n]$.
\end{thm}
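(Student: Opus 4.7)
The plan is to reduce both parts of the theorem to Lemma 3.4(iii) via Remark 3.6 and Theorem 2.4. Under $\alpha\colon G \xrightarrow{\sim} G_{K,S}$, the restricted product $\mathcal{I}(G, S(G))$ is identified with $\mathcal{I}(K,S)$, and the construction defining $\mu_{\Sigma(G)}(G)$ matches the global torsion-of-kernel construction of Lemma 3.4(iii). Consequently, after these identifications, both sides of each claimed isomorphism in (i) and (ii) will become identified with the global $\Sigma$-cyclotome $\mu(K_S) \cong \bigoplus_{\ell \in \Sigma(G)} \mathbb{Q}_\ell/\mathbb{Z}_\ell(1)$, and the maps in question will become tautological.

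For (i), I would first observe that if $H \subset G$ is open with $\alpha(H) = G_{L,S}$ for a finite extension $L/K$ contained in $K_S$, then $L_S = K_S$ as subfields of $\overline{K}$, and the residue characteristics of the primes of $L$ in $S(L)$ coincide with those of the primes of $K$ in $S$, so $\Sigma(H) = \Sigma(G)$. The open subgroups of $H$ form a cofinal subsystem of the open subgroups of $G$, so applying Lemma 3.4(iii) to both $K$ and $L$ identifies $\mu_{\Sigma(G)}(G)$ and $\mu_{\Sigma(H)}(H)$ with $\mu(K_S) = \mu(L_S)$; the map induced by $H \hookrightarrow G$ then becomes the identity under this identification, and is in particular $H$-equivariant. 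The assertion for $\Lambda$ follows by taking inverse limits of $n$-torsion.

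For (ii), let $\mathfrak{p}$ be the prime of $K_S$ with $\alpha(D) = G_{K_{\mathfrak{p}}}$. The projection $\mathcal{I}(G, S(G)) \twoheadrightarrow k^{\times}(D)$ corresponds, via Remark 3.6 and Theorem 2.4, to the projection $\mathcal{I}(K,S) \twoheadrightarrow K_\mathfrak{p}^{\times}$. Passing to the torsion subgroup of the kernel and taking the direct limit over finite extensions $L/K$ contained in $K_S$, this map becomes the natural embedding $\mu(K_S) \hookrightarrow \mu(K_\mathfrak{p}^{\mathrm{sep}})$ of global into local roots of unity. On $\ell$-primary parts for $\ell \in \Sigma(G)$ it is a map between two copies of $\mathbb{Q}_\ell/\mathbb{Z}_\ell$, and its surjectivity amounts to the inclusion $K(\mu_{\ell^\infty}) \subset K_S$. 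This is where the hypotheses on $S$ enter, and is the only genuinely non-formal step: $K(\mu_{\ell^\infty})/K$ is unramified outside the primes of $K$ above $\ell$ and the archimedean primes, all of which lie in $S$ by the conjugate-stability condition and $\mathscr{P}_K^{\mathrm{inf}} \subset S$. $D$-equivariance is automatic from the equivariance of every step, and the statement for $\Lambda$ follows by passing to inverse limits of $n$-torsion.
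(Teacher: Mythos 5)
Your proposal is correct and follows essentially the same route as the paper: both parts reduce to Lemma 3.4(iii) (together with $\Sigma(G)=\Sigma(H)$ and the cofinality of open subgroups for (i)), and for (ii) both identify the source with $\mu(K_S)$ and check the map to the local $\ell$-primary roots of unity is bijective. The only cosmetic difference is in the surjectivity step of (ii): you invoke $K(\mu_{\ell^\infty})\subset K_S$ directly, while the paper deduces surjectivity from injectivity using that both sides are abstractly $\mathbb{Q}_\ell/\mathbb{Z}_\ell$ --- the same arithmetic input, made slightly more explicit in your version.
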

\begin{proof}
Assertion (i) follows immediately from the fact that an open subgroup of NF-type with density 1 restricted ramification is again a profinite group of NF-type with density 1 restricted ramification, together with the fact that $\Sigma(G) = \Sigma(H)$ and Definition 3.7. \par 
Now we verify assertion (ii). Notice that to verify assertion (ii), it suffices to verify that for each $\ell \in \Sigma(G)$, there exists a $D$-equivariant isomorphism
$$
\mu_{\ell^{\infty}}(G) \xrightarrow{\sim} \mu_{\ell^{\infty}}(D)
$$
where
$$
\mu_{\ell^{\infty}}(G) := \varinjlim_n ~\mu_{\Sigma(G)}(G)[\ell^n]
$$
and
$$
\mu_{\ell^{\infty}}(D) := \varinjlim_n ~\mu(D)[\ell^n].
$$
Notice that there is a natural map
$$
\varinjlim_n ~\varinjlim_H ~\text{ker}(\mathcal{I}(H,S(H)) \to H^{\text{ab}})[\ell^n] \to \varinjlim_n ~\varinjlim_H~k^{\times}(D_H)[\ell^{n}]
$$
where $H$ ranges over all open subgroups of $G$ and $D_H := D \cap H$. In particular, we have a natural map
$$
t: \mu_{\ell^{\infty}}(G) \xrightarrow{\sim} \mu_{\ell^{\infty}}(D).
$$
The injectivity of $t$ follows from Lemma 3.4 (iii). The surjectivity of $t$ follows from the injectivity of $t$, since both $\mu_{\ell^{\infty}}(G)$ and $\mu_{\ell^{\infty}}(D)$ are abstractly isomorphic to $\mathbb{Q}_{\ell}/\mathbb{Z}_{\ell}$ and every injective endomorphism of $\mathbb{Q}_{\ell}/\mathbb{Z}_{\ell}$ is surjective. Moreover, one checks immeidately that $t$ is $D$-equivariant. This proves Theorem 3.8.
\end{proof}
\begin{cor}
Let $G$ be a profinite group of NF-type with density 1 restricted ramification. Let $D \in \widetilde{S}(G)$. Then the natural surjection $\mathcal{I}(G,S(G)) \twoheadrightarrow k^{\times}(D)$ induces the following $D$-equivariant inclusion
$$
\iota_D: \Lambda_{\Sigma(G)}(G) \hookrightarrow \Lambda(D).
$$
\end{cor}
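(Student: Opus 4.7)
The plan is to obtain $\iota_D$ essentially for free from Theorem 3.8 (ii) together with the prime-by-prime decomposition of the full local cyclotome. First I would note that by construction $\mu(D) = \mu(K_\mathfrak{p}^{\text{sep}})$ is a torsion abelian group that decomposes canonically, as a $D$-module, into its $\ell$-primary components:
$$
\mu(D) \;=\; \bigoplus_{\ell \in \mathfrak{Primes}} \mu_{\ell^\infty}(D).
$$
Passing to the inverse limit $\varprojlim_n (-)[n]$ turns this direct sum into a direct product, yielding a canonical $D$-equivariant identification
$$
\Lambda(D) \;\xrightarrow{\sim}\; \prod_{\ell \in \mathfrak{Primes}} \Lambda_\ell(D),
$$
where $\Lambda_\ell(D) = \varprojlim_n \mu_{\ell^\infty}(D)[n]$ is the same $\ell$-adic factor appearing in Theorem 3.8 (ii).

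Next, I would define $\iota_D$ as the composite
$$
\Lambda_{\Sigma(G)}(G) \;\xrightarrow{\sim}\; \prod_{\ell \in \Sigma(G)} \Lambda_\ell(D) \;\hookrightarrow\; \prod_{\ell \in \mathfrak{Primes}} \Lambda_\ell(D) \;\xrightarrow{\sim}\; \Lambda(D),
$$
where the first arrow is the $D$-equivariant isomorphism of Theorem 3.8 (ii) (which is by definition induced by the surjection $\mathcal{I}(G,S(G)) \twoheadrightarrow k^{\times}(D)$), the middle arrow is the inclusion of factors indexed by the subset $\Sigma(G) \subset \mathfrak{Primes}$, and the last arrow is the identification just established. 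Injectivity is clear, since each factor $\Lambda_\ell(D)$ is a retract of the product.

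For $D$-equivariance, the outer isomorphisms are $D$-equivariant by Theorem 3.8 (ii) and by functoriality of the $\ell$-primary decomposition of $\mu(D)$ with respect to continuous $D$-actions, while the middle coordinate embedding is equivariant essentially tautologically. Chasing the definitions confirms that $\iota_D$ is the map induced by the natural surjection $\mathcal{I}(G,S(G)) \twoheadrightarrow k^{\times}(D)$ in the sense of Theorem 3.8. I do not expect any substantive obstacle here: the corollary is a bookkeeping consequence of Theorem 3.8, with the only mild point being to record that the $\Sigma(G)$-part of $\Lambda(D)$ sits inside the full cyclotome as a direct factor.
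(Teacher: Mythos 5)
Your argument is correct and matches the paper's proof, which simply declares the corollary immediate from Theorem 3.8 (ii); you have spelled out the implicit steps (the $\ell$-primary decomposition $\Lambda(D) \cong \prod_{\ell}\Lambda_{\ell}(D)$ and the inclusion of the $\Sigma(G)$-indexed subproduct) accurately. No issues.
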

\begin{proof}
This is immediate from Theorem 3.8 (ii).
\end{proof}
Now let $K$ be a number field, and let $S \subset \mathscr{P}_K$ be an infinite subset containing $\mathscr{P}_K^{\text{inf}}$ such that $\delta(S) = 1$. We have the following exact sequence (c.f. Proposition 8.3.4 in \cite{NSW})
$$
1 \to \mu_n \to \mathcal{O}_S^{\times} \xrightarrow{(-)^n} \mathcal{O}_S^{\times} \to 1
$$
where $\mathcal{O}_S$ denotes the union $\bigcup_{L/K} \mathcal{O}^{\times}_{L,S}$ for all finite extension $L/K$ contained in $K_S$, and $n \in \mathbb{N}(S)$. In particular, we have an exact sequence
$$
1 \to \mathcal{O}_{K,S}^{\times}/(\mathcal{O}_{K,S}^{\times})^n \to H^1(G_{K,S},\mu_n) \to \text{Cl}_S(K)[n] \to 1.
$$
But since $\delta(S) = 1$, it follows from Chebotarev density theorem that $\text{Cl}_S(K) = 1$, hence we obtain an isomorphism
$$
\mathcal{O}_{K,S}^{\times}/(\mathcal{O}_{K,S}^{\times})^n \xrightarrow{\sim} H^1(G_{K,S},\mu_n).
$$
\begin{defn}
Let $K$ be a number field, and let $S \subset \mathscr{P}_K$ be a subset such that $\delta(S) = 1$ and $\mathscr{P}_K^{\text{inf}} \subset S$. Moreover, we write $\Sigma$ for the set of prime numbers determined by $S$. We define the $S$-Kummer container associated to $K$ to be the fibre product in the following diagram
$$
\begin{tikzcd}
& \mathcal{I}(K,S) \arrow[d,hookrightarrow] \\
(\mathcal{O}_{K,S}^{\times})^{\Sigma} \arrow[r,hookrightarrow] & \prod_{\mathfrak{p} \in S} (K_{\mathfrak{p}}^{\times})^{\wedge}
\end{tikzcd}
$$
where the horizontal arrow is defined to be the composite
$$
(\mathcal{O}_{K,S}^{\times})^{\Sigma} \hookrightarrow (K^{\times})^{\Sigma} \hookrightarrow \prod_{\mathfrak{p} \in S_f}(K_{\mathfrak{p}}^{\times})^{\Sigma} \hookrightarrow \prod_{\mathfrak{p} \in S_f}(K_{\mathfrak{p}}^{\times})^{\wedge}.
$$
In this case, we denote by $\mathcal{H}^{\times}(K,S)$ for the $S$-Kummer container associated to $K$. Moreover, we write $\mathcal{H}_{\times}(K,S) := \mathcal{H}^{\times}(K,S) \cup \{0\}$.
\end{defn}
\begin{lem}
Let $K$ be a number field and let $S \subset \mathscr{P}_K$ be a subset such that $\delta(S) = 1$ and $\mathscr{P}_K^{\text{inf}} ~\text{and}~\mathscr{P}_K^{\bullet}\subset S$ and that $S$ is conjugate-stable. Moreover, we write $\Sigma$ for the set of prime numbers determined by $S$. Then the followings hold \par 
(i) The following diagram has exact rows and commutes
$$
\begin{tikzcd}
	1 & {\mathcal{O}_K^{\times}} & {\mathcal{O}_{K,S}^{\times}} & {\mathcal{O}_{K,S}^{\times}/\mathcal{O}_{K}^{\times}} & 1 \\
	1 & {(\mathcal{O}_K^{\times})^{\Sigma}} & {\mathcal{H}^{\times}(K,S)} & {\mathcal{O}_{K,S}^{\times}/\mathcal{O}_K^{\times}} & 1.
	\arrow[from=1-1, to=1-2]
	\arrow[from=1-2, to=1-3]
	\arrow[hook, from=1-2, to=2-2]
	\arrow[from=1-3, to=1-4]
	\arrow[hook, from=1-3, to=2-3]
	\arrow[from=1-4, to=1-5]
	\arrow[no head, from=1-4, to=2-4]
	\arrow[shift left, no head, from=1-4, to=2-4]
	\arrow[from=2-1, to=2-2]
	\arrow[from=2-2, to=2-3]
	\arrow[from=2-3, to=2-4]
	\arrow[from=2-4, to=2-5]
\end{tikzcd}
$$
(ii) The middle injective map in the diagram displayed in assertion (i) induces an inclusion:
$$
\mu(K) \hookrightarrow \mathcal{H}^{\times}(K,S)_{\text{tor}}
$$
and an isomorphism
$$\mu(K)^{\Sigma} \xrightarrow{\sim} \mathcal{H}^{\times}(K,S)_{\text{tor}}^{\Sigma}$$
(iii) We write $S_f^{d=1}$ for the subset of $S_f$ consisting of elements of degree $1$ (c.f. $d_v = 1$ in Definition 3.1). Then the composite
$$
\mathcal{H}_{\times}(K,S) \to \prod_{\mathfrak{p} \in S_f} (K_{\mathfrak{p}})_{\times} \twoheadrightarrow \prod_{\mathfrak{p} \in S_f^{d=1}} (K_{\mathfrak{p}})_{\times}
$$
is injective.
\end{lem}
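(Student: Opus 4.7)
I will treat the three parts in order. Parts (i) and (ii) are essentially formal consequences of the density-one hypothesis together with standard $S$-unit algebra; part (iii) is the core content and depends on a Chebotarev--Kummer argument.

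For (i), the first step is to invoke $\delta(S) = 1$ and Chebotarev's theorem to conclude $\text{Cl}_S(K) = 1$, so that the valuation map identifies $\mathcal{O}_{K,S}^\times/\mathcal{O}_K^\times$ with the free abelian group $\bigoplus_{\mathfrak{p} \in S_f}\mathbb{Z}$. The map $\mathcal{H}^\times(K,S) \to \mathcal{O}_{K,S}^\times/\mathcal{O}_K^\times$ sends $(\alpha, (b_\mathfrak{p})) \mapsto (v_\mathfrak{p}(b_\mathfrak{p}))_\mathfrak{p}$, well-defined because $(b_\mathfrak{p}) \in \mathcal{I}(K,S)$ is a unit almost everywhere; surjectivity follows by lifting any finitely supported divisor to a principal $S$-unit $u$ and taking $(u, (u)_\mathfrak{p})$ in the fiber product. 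The kernel is identified with $(\mathcal{O}_K^\times)^\Sigma$ using the (non-canonical) splitting of the short exact sequence $1 \to \mathcal{O}_K^\times \to \mathcal{O}_{K,S}^\times \to \bigoplus\mathbb{Z} \to 1$: vanishing of the valuations $v_\mathfrak{p}(b_\mathfrak{p})$ forces $\alpha$ into the $(\mathcal{O}_K^\times)^\Sigma$-summand, and the fiber-product condition then pins each $b_\mathfrak{p}$ down as the image of $\alpha$ in $\mathcal{O}_{K_\mathfrak{p}}^\times$. Commutativity and the injectivities of the two vertical maps are then straightforward.

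Part (ii) is a direct corollary. Since $\mathcal{O}_{K,S}^\times/\mathcal{O}_K^\times$ is torsion-free, the torsion of $\mathcal{H}^\times(K,S)$ agrees with that of $(\mathcal{O}_K^\times)^\Sigma$, which equals $\mu(K)^\Sigma$. The hypothesis $\mathscr{P}_K^\bullet \subset S$ forces every odd prime $\ell$ dividing $|\mu(K)|$ into $\Sigma$ (because $\zeta_\ell \in K$ causes $\ell$-ramification in $K/\mathbb{Q}$), and $2 \in \Sigma$ by the very definition of $\mathscr{P}_K^\bullet$; hence $\mu(K)^\Sigma = \mu(K)$, giving the claimed inclusion $\mu(K) \hookrightarrow \mathcal{H}^\times(K,S)_{\text{tor}}$ and the isomorphism $\mu(K)^\Sigma \xrightarrow{\sim} \mathcal{H}^\times(K,S)_{\text{tor}}^\Sigma$.

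For (iii), suppose $(\alpha, (b_\mathfrak{p})) \in \mathcal{H}^\times(K,S)$ has $b_\mathfrak{p} = 1$ for every $\mathfrak{p} \in S_f^{d=1}$. The fiber-product condition forces the image of $\alpha$ in $(K_\mathfrak{p}^\times)^\wedge$ to be trivial at every such $\mathfrak{p}$; equivalently, using the Kummer isomorphism $\mathcal{O}_{K,S}^\times/m \cong H^1(G_{K,S}, \mu_m)$ (valid since $\text{Cl}_S(K) = 1$), the class $\alpha_m$ restricts trivially to $H^1(K_\mathfrak{p}, \mu_m)$ at every degree-one prime of $S$, for every $m \in \mathbb{N}(\Sigma)$. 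Since degree-one primes have density one in $\mathscr{P}_K^{\text{fin}}$ by the prime number theorem for $K$, one has $\delta(S_f^{d=1}) = 1$; a Chebotarev argument applied to the Galois closure of $K(\alpha_m^{1/m}, \mu_m)/K$ then bounds the density of primes at which $\alpha_m$ can be locally trivial by $|H^*|/|G|$, where $H$ is the stabilizer of a chosen $m$-th root of the Kummer generator. At a single exponent this bound need not be strictly less than one (this is precisely the Grunwald--Wang exception), but for any nontrivial $\alpha$ one can pass to a sufficiently divisible $m' \in \mathbb{N}(\Sigma)$ for which the obstruction is detected at some $\mathfrak{p} \in S_f^{d=1}$, contradicting the hypothesis. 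Consequently $\alpha = 1$, and each $b_\mathfrak{p}$ must equal $1$ by injectivity of $K_\mathfrak{p}^\times \hookrightarrow (K_\mathfrak{p}^\times)^\wedge$. Finally, $0 \in \mathcal{H}_\times(K,S)$ is distinguished from all other elements because only it maps to a tuple with a zero component. The main obstacle is executing the Chebotarev step cleanly in the presence of Grunwald--Wang-type exceptions; the key is to leverage the pro-$\Sigma$ completion in full rather than any single exponent.
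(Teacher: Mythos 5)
Your overall route through (i) and (ii) parallels the paper's (analyse the divisor map on the fibre product, use that $\mathcal{O}_{K,S}^{\times}/\mathcal{O}_K^{\times}$ is free abelian, and use $\mathscr{P}_K^{\bullet}\subset S$ to get $\mu(K)^{\Sigma}=\mu(K)$ --- that last point is exactly right), but there is a genuine error at the foundation of (i): $\text{Cl}_S(K)=1$ does \emph{not} identify $\mathcal{O}_{K,S}^{\times}/\mathcal{O}_K^{\times}$ with $\bigoplus_{\mathfrak{p}\in S_f}\mathbb{Z}$ via the valuation map. The cokernel of $\mathcal{O}_{K,S}^{\times}\to\bigoplus_{\mathfrak{p}\in S_f}\mathbb{Z}$ is the subgroup of $\text{Cl}(K)$ generated by the classes of the primes in $S_f$, which for $\delta(S)=1$ is all of $\text{Cl}(K)$ by Chebotarev; the group $\text{Cl}_S(K)=\text{Cl}(K)/\langle[\mathfrak{p}]:\mathfrak{p}\in S_f\rangle$ is the complementary quotient, and its vanishing says nothing about surjectivity of the valuation map. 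Consequently your map $(\alpha,(b_{\mathfrak{p}}))\mapsto(v_{\mathfrak{p}}(b_{\mathfrak{p}}))_{\mathfrak{p}}$ lands a priori only in $\bigoplus\mathbb{Z}$, and ``well-defined because $(b_{\mathfrak{p}})$ is a unit almost everywhere'' shows only that the divisor is finitely supported, not that it is the divisor of an actual $S$-unit. That latter claim is precisely the content of exactness at the third term of the bottom row; the paper addresses it by intersecting the fibre product with the preimage $\widetilde{P}$ of $\mathcal{O}_{K,S}^{\times}/\mathcal{O}_K^{\times}\subset J_{K,S}$ inside $\mathcal{I}(K,S)$ and invoking the pro-$\Sigma$-completed unit sequence. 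You still need an argument that an integral, finitely supported divisor lying in the closure of the principal divisors inside $J_{K,S}^{\Sigma}$ is itself principal; this is where the real work of (i) sits, and your proposal skips it.

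For (iii), your plan amounts to reproving the density-one local--global principle for $m$-th powers by Chebotarev applied to $K(\mu_m,\alpha_m^{1/m})$, whereas the paper simply notes $\delta(S_f^{d=1})=1$ and cites Theorem 9.1.11 of \cite{NSW}. You correctly identify the Grunwald--Wang special case as the obstacle, but the decisive step --- ``for any nontrivial $\alpha$ one can pass to a sufficiently divisible $m'\in\mathbb{N}(\Sigma)$ for which the obstruction is detected'' --- is asserted rather than argued, and it is the whole point. The standard way to make your ``leverage the pro-$\Sigma$ completion'' remark precise is: since $2\in\Sigma$ (all primes above $2$ lie in $\mathscr{P}_K^{\bullet}\subset S$), for each $m\in\mathbb{N}(\Sigma)$ consider $\alpha_{2m}$; local triviality at a density-one set of primes forces $\alpha_{2m}$ to be at worst an $m$-th power in $K^{\times}$ even in the special case, whence $\alpha_m=1$, and letting $m$ vary gives $\alpha=1$. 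Without either this argument or a direct appeal to the cited theorem, part (iii) remains incomplete.
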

\begin{proof}
For assertion (i), the exactness of the top sequence is trivial. We shall verify the exactness of the bottom sequence. \par 
Consider the natural exact sequence
$$
1 \to \mathcal{O}_K^{\times} \to \mathcal{O}_{K,S}^{\times} \to J_{K,S}
$$
where $J_{K,S} := \text{Div}(\text{Spec}(\mathcal{O}_{K,\mathscr{P}_K \setminus S}))$. \par
Then it follows from the proof of Lemma 5.29 (i) in \cite{Ho4} (by replacing various $k^{\times}$ by $\mathcal{O}_{k,S}^{\times}$), that pro-$\Sigma$ completion induces the following exact sequence.
$$
1 \to (\mathcal{O}_{K}^{\times})^{\Sigma} \to (\mathcal{O}_{K,S}^{\times})^{\Sigma} \to J_{K,S}^{\Sigma}.
$$
In particular, to verify the exactness of the lower sequence, it suffice to verify that the inverse image of $\mathcal{O}_{K,S}^{\times}/\mathcal{O}_{K}^{\times} \subset J_{K,S} \subset J_{K,S}^{\Sigma}$ in $(\mathcal{O}^{\times}_{K,S})^{\Sigma}$ coincides with $\mathcal{H}^{\times}(K,S)$. \par 
Consider the natural map
$$
\mathcal{I}(K,S) \to J_{K,S}
$$
$$
(\dots,a_{\mathfrak{p}},\dots) \mapsto \prod_{\mathfrak{p} \in S_f} \mathfrak{p}^{\text{ord}_{\mathfrak{p}}(a_{\mathfrak{p}})}
$$
determines natural map
$$
\prod_{\mathfrak{ p} \in S} (K_{\mathfrak{p}}^{\times})^{\wedge} \to J_{K,S}^{\Sigma}
$$
$$
(\dots,u_{\mathfrak{p}}\pi_{\mathfrak{p}}^{z_{\mathfrak{p}}},\dots) \mapsto \prod_{\mathfrak{p} \in S} \mathfrak{p}^{z_{\mathfrak{p}}}
$$
where $z_{\mathfrak{p}} \in \mathbb Z^{\Sigma}$. In particular, we have the following commutative diagram
$$
\begin{tikzcd}
	& {(\mathcal{O}^{\times}_{K,S})^{\Sigma}} \\
	{\prod_{\mathfrak{p}\in S}(K_{\mathfrak{p}}^{\times})^{\wedge}} & {J_{K,S}^{\Sigma}} \\
	{\mathcal{I}({K,S})} & {J_{K,S}}.
	\arrow[hook, from=1-2, to=2-1]
	\arrow[from=1-2, to=2-2]
	\arrow[from=2-1, to=2-2]
	\arrow[hook', from=3-1, to=2-1]
	\arrow[from=3-1, to=3-2]
	\arrow[hook', from=3-2, to=2-2]
\end{tikzcd}
$$
Let us write $\widetilde{P}$ for the inverse image of $\mathcal{O}_{K,S}^{\times}/\mathcal{O}_K^{\times} \subset J_{K,S}$ in $\mathcal{I}(K,S)$ via the lower horizontal map in the above diagram. We have
$$
\widetilde{P} = \{(\dots,a_{\mathfrak{p}},\dots): \exists x \in \mathcal{O}_{K,S}^{\times}~\text{s.t.}~\text{ord}_{\mathfrak{p}}(a_{\mathfrak{p}}) = \text{ord}_{\mathfrak{p}}(x)~\forall \mathfrak{p} \in S_f\}.
$$
Then one verifies immediately that the intersection of $(\mathcal{O}_{K,S}^{\times})^{\Sigma}$ and $\widetilde{P}$ in $\prod_{\mathfrak{p} \in S_f}(K_{\mathfrak{p}}^{\times})^{\wedge}$ coincide with $\mathcal{H}^{\times}(K,S)$. Hence we can conclude that
$$
1 \to (\mathcal{O}_K^{\times})^{\Sigma} \to \mathcal{H}^{\times}(K,S) \to \mathcal{O}^{\times}_{K,S}/\mathcal{O}_K^{\times} \to 1
$$
is exact. The injectivity of vertical arrows are immediate. This proves assertion (i) and assertion (ii) is an immediate consequence of assertion (i) together with the condition $\mathscr{P}_K^{\bullet} \subset S$. \par
Now we verify assertion (iii). Notice that we have $\delta(S) = \delta(S_f) = \delta(S_f^{d=1})$ (c.f. Chapter VII discussion above Theorem 13.2 in \cite{ANT}, together with the fact that finite intersections of density 1 sets is again of density 1). Then it follows from Theorem 9.1.11 in \cite{NSW}, that the following composite
$$
(\mathcal{O}_{K,S}^{\times})^{\Sigma} \hookrightarrow (K^{\times})^{\Sigma} \to \prod_{\mathfrak{p} \in S_f^{d=1}}(K_{\mathfrak{p}}^{\times})^{\Sigma} \to \prod_{\mathfrak{p} \in S_f^{d=1}}(K_{\mathfrak{p}}^{\times})^{\wedge}
$$
is injective. Hence also, $\mathcal{H}_{\times}(K,S) \to \prod_{\mathfrak{p} \in S_f} (K_{\mathfrak{p}})_{\times} \twoheadrightarrow \prod_{\mathfrak{p} \in S_f^{d=1}}(K_{\mathfrak{p}})_{\times}$ is injective.
\end{proof}
\begin{lem}
Let $k$ be a $p$-adic local field for some prime number $p$. Let $\Sigma$ be a non-empty set of prime numbers. Then there exists a natural inclusion
$$
H^1(G_k,\mathbb Z^{\Sigma}(1)) \to H^1(G_k,\widehat{\mathbb{Z}}(1))
$$
induced by the following $G_k$-equivariant inclusion:
$$
\mathbb Z^{\Sigma}(1) \hookrightarrow \widehat{\mathbb Z}(1).
$$
\end{lem}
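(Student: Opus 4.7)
The plan is to exhibit a $G_k$-equivariant retraction of the inclusion $\mathbb{Z}^{\Sigma}(1) \hookrightarrow \widehat{\mathbb{Z}}(1)$; once that is in place, the functoriality of continuous group cohomology will force the induced map on $H^1$ to be split injective, yielding the desired inclusion.

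My first step would be to exploit the canonical product decomposition $\widehat{\mathbb{Z}} \cong \prod_{\ell \in \mathfrak{Primes}} \mathbb{Z}_{\ell}$. Writing $\Sigma^{c} := \mathfrak{Primes} \setminus \Sigma$, this identifies $\mathbb{Z}^{\Sigma} \subset \widehat{\mathbb{Z}}$ with $\prod_{\ell \in \Sigma} \mathbb{Z}_{\ell}$ and produces a ring-theoretic direct sum decomposition $\widehat{\mathbb{Z}} = \mathbb{Z}^{\Sigma} \oplus \mathbb{Z}^{\Sigma^{c}}$ cut out by the two orthogonal central idempotents corresponding to $\Sigma$ and $\Sigma^{c}$. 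I would then observe that twisting by the cyclotomic character preserves this decomposition, since the $G_k$-action on $\widehat{\mathbb{Z}}(1)$ is $\widehat{\mathbb{Z}}$-linear and the idempotents above are $G_k$-fixed. This gives a $G_k$-module splitting
$$
\widehat{\mathbb{Z}}(1) \;=\; \mathbb{Z}^{\Sigma}(1) \,\oplus\, \mathbb{Z}^{\Sigma^{c}}(1),
$$
and in particular a $G_k$-equivariant retraction of the given inclusion. Applying continuous $H^{1}(G_k,-)$ and invoking functoriality then produces a $\mathbb{Z}$-module splitting of the induced map, which is therefore (split) injective.

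There is no real obstacle: the only point to check is that the natural projection $\widehat{\mathbb{Z}}(1) \twoheadrightarrow \mathbb{Z}^{\Sigma}(1)$ is $G_k$-equivariant, which is immediate because the Galois action is scalar multiplication via the cyclotomic character $\chi_{\text{cyc}} : G_k \to \widehat{\mathbb{Z}}^{\times}$ and thus respects the idempotent decomposition. The $p$-adic local structure of $k$ plays no role in the argument; indeed the same reasoning shows the statement for any profinite group in place of $G_k$.
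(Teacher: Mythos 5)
Your argument is correct, but it takes a genuinely different route from the paper. The paper writes down the short exact sequence of $G_k$-modules
$$
0 \to \mathbb Z^{\Sigma}(1) \to \widehat{\mathbb Z}(1) \to \widehat{\mathbb Z}(1)^{(\Sigma')} \to 0
$$
(with $\Sigma'$ the complement of $\Sigma$) and deduces injectivity on $H^1$ from the long exact sequence together with the vanishing $H^0\bigl(G_k,\mathbb Z_{\ell}(1)\bigr)=1$ for every prime $\ell$; that vanishing is where the hypothesis that $k$ is a $p$-adic local field enters, since it amounts to $k$ containing only finitely many $\ell$-power roots of unity. You instead observe that this short exact sequence is already split $G_k$-equivariantly: the idempotent decomposition $\widehat{\mathbb Z}=\mathbb Z^{\Sigma}\times\mathbb Z^{\Sigma'}$ is preserved by the $\widehat{\mathbb Z}$-linear cyclotomic action, so $\mathbb Z^{\Sigma}(1)$ is a topological direct summand of $\widehat{\mathbb Z}(1)$ and the induced map on $H^1$ is split injective by functoriality and the compatibility of continuous cochains with finite products. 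Your approach buys two things: the conclusion is strengthened to split injectivity and holds for an arbitrary profinite group acting through $\widehat{\mathbb Z}^{\times}$, and you sidestep the mild technical point that long exact sequences in continuous cohomology with non-discrete coefficients such as $\mathbb Z_{\ell}(1)$ require some care (e.g.\ a continuous set-theoretic section of the surjection), which your splitting provides automatically. The paper's approach, by contrast, records the arithmetic input $H^0(G_k,\mathbb Z_{\ell}(1))=1$ explicitly, which is the form in which this vanishing is used elsewhere in local reconstruction arguments; but for the statement as given, your proof is the cleaner one.
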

\begin{proof}
Consider the following exact sequence of $G_k$-modules:
$$
0 \to \mathbb Z^{\Sigma}(1) \to \widehat{\mathbb{Z}}(1) \to \widehat{\mathbb Z}(1)^{(\Sigma')} \to 0.
$$
Then this lemma follows immediately by taking the long exact sequence to the above sequence together with the fact that $H^0(G_k,\mathbb Z_{\ell}(1)) = 1$ for any prime number $\ell$.
\end{proof}

\begin{defn}
Let $G$ be a profinite group NF-type with density 1 restricted ramification. We define the $S(G)$-Kummer container associated to $G$ as the fibre product in the following diagram
$$
\begin{tikzcd}
    & \mathcal{I}(G,S(G)) \arrow[d,hookrightarrow] \\
    H^1(G,\Lambda_{\Sigma(G)}(G)) \arrow[r,hookrightarrow] & \prod_{D \in \widetilde{S}(G)} H^1(D,\Lambda(D))
\end{tikzcd}
$$
where the vertical arrow is defined to be the composite (c.f. Proposition 3.11 in \cite{Ho1})
$$
\mathcal{I}(G,S(G)) \hookrightarrow \prod_{v \in S(G)} k^{\times}(v) \hookrightarrow \prod_{D \in \widetilde{S}(G)} k^{\times}(D) \xrightarrow{\prod_{D \in \widetilde{S}(G)}\text{Kmm}_D} \prod_{D \in \widetilde{S}(G)} H^1(D,\Lambda(D))
$$
and the horizontal arrow is defined to be the composite (the well-defined-ness and natural-ness follow from Theorem 3.8 and Corollary 3.9 and Lemma 3.12)
$$
H^1(G,\Lambda_{\Sigma(G)}(G)) \hookrightarrow \prod_{D\in \widetilde{S}(G)} H^1(D,\prod_{\ell \in \Sigma(G)}\Lambda_{\ell}(D)) \hookrightarrow \prod_{D \in \widetilde{S}(G)}H^1(D,\Lambda(D)).
$$
We shall denote by $\mathcal{H}^{\times}(G,S(G))$ for this group. Moreover, we define $\mathcal{H}_{\times}(G,S(G)) := \mathcal{H}^{\times}(G,S(G)) \cup \{0\}$.
\end{defn}

\begin{prop}
Let $G$ be a profinite group of NF-type with density 1 restricted ramification. Let $H\subset G$ be an open subgroup. Then there exists a commutative diagram whose vertical arrows are injective, horizontal arrows are isomorphisms:
$$
\begin{tikzcd}
	{\mathcal{H}^{\times}(G,S(G))} & {\mathcal{H}^{\times}(K,S)} \\
	{\mathcal{H}^{\times}(H,S(H))} & {\mathcal{H}^{\times}(L,S_L)}
	\arrow["\sim", from=1-1, to=1-2]
	\arrow[from=1-1, to=2-1]
	\arrow[from=1-2, to=2-2]
	\arrow["\sim", from=2-1, to=2-2]
\end{tikzcd}
$$
where $L/K$ is the finite extension corresponding to $H$, and $S_L$ is the set of primes of $L$ lying above primes in $S$. Moreover, if $H$ is open normal in $G$, then the lower isomorphism is equivariant w.r.t $G/H \xrightarrow{\sim} \text{Gal}(L/K)$ induced by the isomorphism $\alpha$ (c.f. Definition 2.1).
\end{prop}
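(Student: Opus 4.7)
The plan is to prove the horizontal isomorphism by identifying, term by term, the three corners of the fibre-product diagram defining $\mathcal{H}^{\times}(G,S(G))$ with the corresponding three corners of the diagram defining $\mathcal{H}^{\times}(K,S)$, in a manner that intertwines the two embedding arrows. Once this compatibility is in place, the top isomorphism follows from the universal property of pullbacks; the bottom isomorphism is the same statement applied to the pair $(H,L)$, since open subgroups of $G$ are again of NF-type with density~1 restricted ramification; and the vertical arrows arise from the naturality of every ingredient under the inclusion $H \hookrightarrow G$.

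For the ambient product $\prod_{D \in \widetilde{S}(G)} H^{1}(D,\Lambda(D))$, I would use local Kummer theory to obtain, for each $D$, a canonical isomorphism $(k^{\times}(D))^{\wedge} \xrightarrow{\sim} H^{1}(D,\Lambda(D))$, and then invoke Theorem~2.4 to match this with $(K_{\mathfrak{p}}^{\times})^{\wedge}$. Under this identification, Remark~3.6 transports the slanted embedding $\mathcal{I}(G,S(G)) \hookrightarrow \prod_{D} H^{1}(D,\Lambda(D))$ precisely to the embedding $\mathcal{I}(K,S) \hookrightarrow \prod_{\mathfrak{p} \in S_f}(K_{\mathfrak{p}}^{\times})^{\wedge}$ used in the definition of $\mathcal{H}^{\times}(K,S)$.

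For the global cohomological corner, the Kummer sequence displayed before Definition~3.11, together with $\mathrm{Cl}_S(K)=1$ (which follows from $\delta(S)=1$ via Chebotarev), gives an isomorphism $\mathcal{O}_{K,S}^{\times}/(\mathcal{O}_{K,S}^{\times})^n \xrightarrow{\sim} H^{1}(G_{K,S},\mu_n)$ for every $n \in \mathbb{N}(\Sigma)$; the inclusion $\mathbb{N}(\Sigma) \subset \mathbb{N}(S)$ here uses the conjugate-stability of $S$. Passing to the inverse limit over $n \in \mathbb{N}(\Sigma)$, which is exact because the transition maps are surjective, yields $(\mathcal{O}_{K,S}^{\times})^{\Sigma} \xrightarrow{\sim} H^{1}(G_{K,S},\Lambda_{\Sigma}(K_S))$, and Theorem~3.8(ii) transports the right-hand side to $H^{1}(G,\Lambda_{\Sigma(G)}(G))$. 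Compatibility of this identification with the two horizontal embeddings amounts to the functoriality of Kummer theory under restriction to decomposition groups, which combines Lemma~3.12 with the standard compatibility of global and local Kummer maps.

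The vertical arrows then arise from the pointwise naturality of each ingredient: the idelic inclusion $\mathcal{I}(K,S)\to\mathcal{I}(L,S_L)$, the restriction $H^{1}(G,\Lambda_{\Sigma(G)}(G))\to H^{1}(H,\Lambda_{\Sigma(H)}(H))$ (which uses $\Sigma(H)=\Sigma(G)$ and Theorem~3.8(i)), and the restrictions $H^{1}(D,\Lambda(D))\to H^{1}(D\cap H,\Lambda(D\cap H))$. Injectivity propagates from each of these individual restrictions through the fibre product, and $G/H$-equivariance when $H\triangleleft G$ is the functoriality of every step under inner conjugation in $G$. The main obstacle I anticipate is the careful verification that the global-to-local restriction on $H^{1}$ agrees, under our identifications, with the diagonal embedding $(\mathcal{O}_{K,S}^{\times})^{\Sigma}\hookrightarrow\prod_{\mathfrak{p}}(K_{\mathfrak{p}}^{\times})^{\wedge}$; once this functoriality square is checked, the remainder of the argument is a chase of universal properties.
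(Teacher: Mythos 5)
Your proposal is correct and follows essentially the same route as the paper's (much terser) proof: reduce to the two corner identifications $\mathcal{I}(G,S(G))\xrightarrow{\sim}\mathcal{I}(K,S)$ via Remark~3.6 and $H^{1}(G,\Lambda_{\Sigma(G)}(G))\xrightarrow{\sim}(\mathcal{O}_{K,S}^{\times})^{\Sigma}$ via Kummer theory together with $\mathrm{Cl}_S(K)=1$, then conclude by the universal property of the fibre product and naturality under $H\hookrightarrow G$. Your version simply makes explicit the compatibility checks (local Kummer maps on the ambient product, the limit over $n\in\mathbb{N}(\Sigma)$) that the paper dismisses as immediate.
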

\begin{proof}
To verify that the horizontal arrows are isomorphisms, it suffice to verify that we have the following isomorphisms
$$
\mathcal{I}(G,S(G)) \xrightarrow{\sim} \mathcal{I}(K,S)~;~H^1(G,\Lambda_{\Sigma(G)}(G)) \xrightarrow{\sim} (\mathcal{O}_{K,S}^{\times})^{\Sigma}.
$$
On the other hand, the first isomorphism above follows from Remark 3.6 and the second isomorphism above follows from Kummer theory. The injectivity of the vertical arrows are immediate from various definitions involved. The commutativity and the second assertion are immediate.
\end{proof}
\section{Reconstruction of $\mathbb{Q}_{\Sigma}^{\text{sol}}$}
In this section, we shall establish a group-theoretic algorithm to reconstruct a field isomorphic to the maximal unramified outside $\Sigma$ pro-solvable extension of $\mathbb Q$ (i.e. $\mathbb Q_{\Sigma}^{\text{sol}}$) starting from a profinite group $G$. 
\begin{rem}
Notice that $\mathbb Q_{\Sigma}$ is not prosolvable if $\Sigma \neq \emptyset$. Indeed, we just take $\Sigma := \{p\}$ to be the set consisting of a single prime number. Then $G_{\mathbb Q,\{p\}}$ is not pro-solvable for any prime number $p$, e.g. see \cite{Ram2}.
\end{rem}

\begin{lem}
Let $K$ be a number field, and let $S$ be a set of primes of $K$ such that $\mathscr{P}_K^{\text{inf}},\mathscr{P}_K^{\bullet} \subset S$ and $\delta(S) = 1$, furthermore $S$ is conjugate-stable. Moreover, we write $\Sigma$ for the set of prime numbers determined by $S$. Then for each $a \in \mathcal{H}^{\times}(K,S)$ such that there exists $n \in\mathbb N(S)$ with $a^n \in \mathcal{O}_{K,S}^{\times}$, it holds that $a \in \mathcal{O}_{K,S}^{\times}$.
\end{lem}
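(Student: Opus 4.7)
The plan is to use the commutative diagram of Lemma 3.12(i) to decompose $a$ into a global $S$-unit and an element of $(\mathcal{O}_K^{\times})^{\Sigma}$, and then to resolve the latter by an elementary $\ell$-adic computation. First, let $\bar{a}$ denote the image of $a$ under the surjection $\mathcal{H}^{\times}(K,S) \twoheadrightarrow \mathcal{O}_{K,S}^{\times}/\mathcal{O}_K^{\times}$; since the right-hand vertical arrow of that diagram is the identity, one can lift $\bar{a}$ to some $a' \in \mathcal{O}_{K,S}^{\times}$. By exactness of the bottom row, the element $u := a(a')^{-1}$ then lies in $(\mathcal{O}_K^{\times})^{\Sigma} \subset \mathcal{H}^{\times}(K,S)$, yielding the factorisation $a = a' \cdot u$.

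Because $a^n, (a')^n \in \mathcal{O}_{K,S}^{\times}$, we have $u^n \in \mathcal{O}_{K,S}^{\times} \cap (\mathcal{O}_K^{\times})^{\Sigma}$. I would then argue that this intersection, taken inside $\mathcal{H}^{\times}(K,S)$, is exactly $\mathcal{O}_K^{\times}$: any element of it lies in the kernel of $\mathcal{H}^{\times}(K,S) \twoheadrightarrow \mathcal{O}_{K,S}^{\times}/\mathcal{O}_K^{\times}$ (since it is in $(\mathcal{O}_K^{\times})^{\Sigma}$) while also lying in $\mathcal{O}_{K,S}^{\times}$, so exactness of the top row forces it into $\mathcal{O}_K^{\times}$. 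Hence $u^n \in \mathcal{O}_K^{\times}$, and the original claim reduces to the following: if $u \in (\mathcal{O}_K^{\times})^{\Sigma}$ satisfies $u^n \in \mathcal{O}_K^{\times}$ for some $n \in \mathbb{N}(S)$, then $u \in \mathcal{O}_K^{\times}$; granting this, $a = a' u \in \mathcal{O}_{K,S}^{\times}$.

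The key preliminary for the reduced claim is the equality $\mu(K) = \mu(K)_{\Sigma}$, i.e.\ that every root of unity in $K$ has order divisible only by primes in $\Sigma$. The hypothesis $\mathscr{P}_K^{\bullet} \subset S$ forces every prime of residue characteristic $2$ into $S$, hence $2 \in \Sigma$; and for any odd prime $\ell$ with $\zeta_\ell \in K$, the inclusion $\mathbb{Q}(\zeta_\ell) \subset K$ forces every prime of $K$ above $\ell$ to be ramified, hence to lie in $\mathscr{P}_K^{\bullet} \subset S$, giving $\ell \in \Sigma$. Consequently $\mathcal{O}_K^{\times} \hookrightarrow (\mathcal{O}_K^{\times})^{\Sigma}$ is injective, and via Dirichlet's unit theorem it identifies with $\mu(K) \times \mathbb{Z}^r \hookrightarrow \mu(K) \times \prod_{\ell \in \Sigma}\mathbb{Z}_\ell^r$ (diagonal on the free part). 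Writing $u = (\zeta, v)$ in these coordinates, the condition $u^n \in \mathcal{O}_K^{\times}$ reads $nv \in \mathbb{Z}^r$; since every prime factor of $n$ lies in $\Sigma$, an elementary $\ell$-adic divisibility check in each factor $\mathbb{Z}_\ell$ (at each $\ell\mid n$ the required valuation inequality forces $n\mid m$ in $\mathbb{Z}$ where $m$ is the relevant integer value, and then injectivity of multiplication by $n$ on $\mathbb{Z}_\ell$ pins down $v$) shows $v \in \mathbb{Z}^r$, whence $u \in \mathcal{O}_K^{\times}$. The main subtlety is precisely the equality $\mu(K) = \mu(K)_{\Sigma}$, for which the hypothesis $\mathscr{P}_K^{\bullet} \subset S$ is essential; the rest is a bookkeeping chase in the diagram together with a standard $\ell$-adic computation.
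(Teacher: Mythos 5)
Your proof is correct and follows essentially the same route as the paper: the paper invokes the identity $\operatorname{coker}(\mathcal{O}_K^{\times}\hookrightarrow(\mathcal{O}_K^{\times})^{\Sigma})=\operatorname{coker}(\mathcal{O}_{K,S}^{\times}\hookrightarrow\mathcal{H}^{\times}(K,S))$ from Lemma 3.11\,(i) and then observes via Dirichlet's unit theorem that this cokernel is $n$-torsion-free, which is exactly your diagram chase $a=a'u$ followed by your $\ell$-adic divisibility check on $(\mathbb{Z}^{\Sigma}/\mathbb{Z})^{r}$. The only cosmetic difference is that you single out $\mu(K)=\mu(K)_{\Sigma}$ as the main subtlety, whereas in the paper's cokernel formulation the torsion of $\mathcal{O}_K^{\times}$ always surjects onto that of $(\mathcal{O}_K^{\times})^{\Sigma}$ and so never contributes, making that verification (correct though it is) unnecessary.
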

\begin{proof}
Let $a \in \mathcal{H}^{\times}(K,S)$ be such that there exists $n \in \mathbb N(S)$ with $a^n \in \mathcal{O}_{K,S}^{\times}$. It follows from Lemma 3.11 (i), that we have the equality
$$
\text{coker}(\mathcal{O}_{K}^{\times} \hookrightarrow (\mathcal{O}_{K}^{\times})^{\Sigma}) = \text{coker}(\mathcal{O}_{K,S}^{\times} \hookrightarrow \mathcal{H}^{\times}(K,S)).
$$
One verifies immediately from Dirichlet's unit theorem, that $\text{coker}(\mathcal{O}_K^{\times} \hookrightarrow (\mathcal{O}_K^{\times})^{\Sigma})$ is $n$-torsion-free. On the other hand, the image of $a$ in $\text{coker}(\mathcal{O}_{K,S}^{\times} \hookrightarrow \mathcal{H}^{\times}(K,S))$ is an $n$-torsion element, hence trivial, which implies that $a \in \mathcal{O}_{K,S}^{\times}$.
\end{proof}

\begin{defn}
Let $K$ be a number field, and let $S$ be a set of primes of $K$ such that $\mathscr{P}_K^{\text{inf}},\mathscr{P}_K^{\bullet} \subset S$ and $\delta(S) = 1$ and $S$ is conjugate-stable. Let $L/K$ be a finite extension contained in $K_S$, we write $S_L$ for the set of primes of $L$ that lying above primes in $S$. Moreover, we write $\Sigma$ for the set of prime numbers determined by $S$. For each $i \geq 1$, we construct the following two sets
$$
\mathcal{G}(L,i) \subset \mathcal{F}(L,i) \subset \prod_{\mathfrak{p} \in S_L^{d=1}} L_{\mathfrak{p}}
$$
as follows:
\begin{itemize}
\item For $i := 1$, we define $\mathcal{G}(L,i) := \mathcal{F}(L,i) := \{1\}$.
\item For $i \geq 2$, we define
$$
\mathcal{G}(L,i) := \{a \in \mathcal{H}_{\times}(L,S_L): \exists n \in \mathbb N(\Sigma)~\text{s.t.}~a^n \in \mathcal{F}(L,i-1)\}.
$$
\item For $i \geq 2$, we define $\mathcal{F}(L,i)$ to be the subring of $\prod_{\mathfrak{p} \in S_L^{d=1}} L_{\mathfrak{p}}$ generated by $\mathcal{G}(L,i)$.
\end{itemize}
Moreover, we write $\mathcal{F}_K(i) := \varinjlim_L \mathcal{F}(L,i)$ where $L$ ranges over all finite extensions $L/K$ contained in $K_S$. Moreover, we write $\mathcal{F}_K := \bigcup_{i \geq 0} \mathcal{F}_K(i)$.
\end{defn}
\begin{lem}
The equality
$$
\mathcal{F}_K = \mathbb Q_{\Sigma}^{\text{sol}}
$$
holds true in $K_S$ (the left hand side of the equality is contained in $K_S$ by Lemma 4.2 and the right hand side of the equality is contained in $K_S$ by the conjugate-stability condition).
\end{lem}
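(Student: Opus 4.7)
The plan is to prove the equality in $K_S$ via two separate containments, each by induction.

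For $\mathcal{F}_K \subseteq \mathbb{Q}_\Sigma^{\text{sol}}$, I would induct on $i$ to show $\mathcal{F}(L, i) \subseteq \mathbb{Q}_\Sigma^{\text{sol}}$ for every finite extension $L/K$ contained in $K_S$. The base case $\mathcal{F}(L, 1) = \{1\}$ is immediate. For the inductive step, let $a \in \mathcal{G}(L, i)$, so $a \in \mathcal{H}_\times(L, S_L)$ with $a^n \in \mathcal{F}(L, i-1) \subseteq \mathbb{Q}_\Sigma^{\text{sol}}$ for some $n \in \mathbb{N}(\Sigma)$. Lemma 4.2 places $a$ in $\mathcal{O}_{L, S_L}^\times \subseteq L \subseteq K_S$. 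Since $\mathbb{Q}(\zeta_n)/\mathbb{Q}$ is abelian and unramified outside $\Sigma$ (as $n \in \mathbb{N}(\Sigma)$), one has $\zeta_n \in \mathbb{Q}_\Sigma^{\text{sol}}$, so $\mathbb{Q}_\Sigma^{\text{sol}}(a)/\mathbb{Q}_\Sigma^{\text{sol}}$ is a cyclic Kummer extension and hence solvable; moreover $\mathbb{Q}(a)/\mathbb{Q}$ is unramified outside $\Sigma$ because $a \in K_S$. Maximality of $\mathbb{Q}_\Sigma^{\text{sol}}$ forces $a \in \mathbb{Q}_\Sigma^{\text{sol}}$, and then the subring $\mathcal{F}(L, i) \subseteq \mathbb{Q}_\Sigma^{\text{sol}}$.

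For $\mathbb{Q}_\Sigma^{\text{sol}} \subseteq \mathcal{F}_K$, I would exploit the Kummer-tower description of the solvable closure. Any $\alpha \in \mathbb{Q}_\Sigma^{\text{sol}}$ lies in a finite Galois pro-solvable extension $M/\mathbb{Q}$ inside $\mathbb{Q}_\Sigma^{\text{sol}}$, and we can choose a tower $\mathbb{Q} = M_0 \subset M_1 \subset \cdots \subset M_r \ni \alpha$ with each $M_j/M_{j-1}$ cyclic of degree $n_j \in \mathbb{N}(\Sigma)$; refining the tower so that each $M_{j-1}$ contains $\zeta_{n_j}$ (such roots of unity lie in $\mathcal{F}(L, 2) = \mathbb{Z}[\mu^\Sigma(L)]$ for $L$ large), each step becomes Kummer: $M_j = M_{j-1}(\beta_j)$ with $\beta_j^{n_j} = b_j$. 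The Kummer-theoretic exact sequence appearing before Definition 3.10, together with $\text{Cl}_{S_{M_{j-1}}}(M_{j-1}) = 1$ (from $\delta(S) = 1$ via Chebotarev), lets us select $b_j \in \mathcal{O}^\times_{M_{j-1}, S_{M_{j-1}}}$. Inducting on $j$ with hypothesis $M_{j-1} \subseteq \mathcal{F}_K$, we get $b_j \in \mathcal{F}_K$; for any finite $L$ containing $\beta_j$ we have $\beta_j \in \mathcal{H}_\times(L, S_L)$ with $\beta_j^{n_j} = b_j \in \mathcal{F}(L, i-1)$ for some stage $i$ in the direct-limit-over-$L$ system. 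Hence $\beta_j \in \mathcal{G}(L, i) \subseteq \mathcal{F}(L, i) \subseteq \mathcal{F}_K$, so $M_j = M_{j-1}(\beta_j) \subseteq \mathcal{F}_K$, and iteration gives $\alpha \in \mathcal{F}_K$.

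The main obstacle will be the base $M_0 = \mathbb{Q} \subseteq \mathcal{F}_K$ of the reverse induction: the rationals include $S$-units such as $1/p$ for $p \in \Sigma$, which must be identified inside some $\mathcal{F}(L, i)$. Since $\mathcal{F}(L, i)$ is only a subring of $\prod_\mathfrak{p} L_\mathfrak{p}$ generated by Kummer-container elements, and $\mathcal{H}^\times(L, S_L)$ does contain $\mathcal{O}^\times_{L, S_L}$ (including inverses of primes above $\Sigma$), the key is to verify that via the iterative $n$-th-root and direct-limit process, each $S_L$-unit eventually appears in some $\mathcal{G}(L, i)$ by exhibiting a suitable $n$-th power already in the previous stage. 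The injectivity from Lemma 3.11(iii) is essential here so that the resulting diagonal element of $\prod_\mathfrak{p} L_\mathfrak{p}$ is unambiguously identified with the intended rational inside $K_S$, closing the induction loop between the tower construction and the ring-generation process.
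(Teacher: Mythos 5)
Your two-containment strategy is the same as the paper's: the forward inclusion by induction on $i$ using Lemma 4.2 and the maximality of $\mathbb{Q}_{\Sigma}^{\text{sol}}$ among solvable extensions unramified outside $\Sigma$, and the reverse inclusion via Kummer towers together with the exact sequence $1 \to \mathcal{O}_{F,S_F}^{\times}/(\mathcal{O}_{F,S_F}^{\times})^n \to H^1(G_{F,S_F},\mu_n) \to \text{Cl}_{S_F}(F)[n] \to 1$ and the vanishing $\text{Cl}_{S_F}(F)=1$. The forward direction is essentially complete, modulo the routine point that an element of $\mathcal{F}(L,i-1)\cap\mathcal{H}^{\times}(L,S_L)$ which is a genuine algebraic number is automatically an $S_L$-unit, so that Lemma 4.2 is actually applicable.

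The obstacle you isolate in your final paragraph --- getting $\mathbb{Q}$ itself, e.g. $1/p$ for $p\in\Sigma$, into $\mathcal{F}_K$ --- is a genuine gap, and with Definition 4.3 read literally it cannot be closed: the needed containment is false. Induct on $i$: by Lemma 4.2 every nonzero element of $\mathcal{G}(L,i)$ is an $S_L$-unit $a$ with $a^n\in\mathcal{F}(L,i-1)$ for some $n$; if $\mathcal{F}(L,i-1)$ consists of algebraic integers, then $a$ is a root of the monic polynomial $X^n-a^n$ with algebraic integer coefficients, hence is itself an algebraic integer, and the sub\emph{ring} of $\prod_{\mathfrak{p}} L_{\mathfrak{p}}$ generated by algebraic integers again consists of algebraic integers. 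Since $\mathcal{F}(L,1)=\{1\}$, this gives $\mathcal{F}_K\subseteq\overline{\mathbb{Z}}\cap K_S$, so $1/p\notin\mathcal{F}_K$ while $1/p\in\mathbb{Q}\subseteq\mathbb{Q}_{\Sigma}^{\text{sol}}$. (The paper's own proof elides exactly this point at the step ``$\mathcal{F}_K(2)=\mathbb{Q}(\mu(K_S))$'': the ring generated by the torsion of $\mathcal{H}^{\times}$ is only $\mathbb{Z}[\mu(K_S)]$, which does not contain $1/p$.) The equality can only hold if ``subring generated'' in Definition 4.3 is replaced by ``subfield generated'' (equivalently, if one passes to fraction fields at each stage); under that amendment your tower argument for the reverse inclusion --- refine to a Kummer tower over the cyclotomic base, use triviality of the $S$-class group to choose the Kummer generators among $S$-units, and induct --- does close, and it coincides with the paper's argument.
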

\begin{proof}
Consider $i = 2$, by Lemma 3.11 (ii) and Lemma 4.2, we have
$$
\mathcal{F}_K(2) = \mathbb Q(\mu(K_S)) = \bigcup_{m \in \mathbb N(\Sigma)} \mathbb Q(\zeta_m)
$$
which is abelian and unramified outside $\Sigma$ over $\mathbb Q$, hence contained in $\mathbb Q_{\Sigma}^{\text{sol}}$. Assume that for $i \geq 3$, the containment $\mathcal{F}_K(i) \subset \mathbb Q_{\Sigma}^{\text{sol}}$ holds true. Let us consider $\mathcal{F}_K(i+1)$. By Definition 4.3 and the equality $\mathcal{F}_K(2) = \mathbb Q(\mu(K_S)) \subset \mathcal{F}_K(i)$, $\mathcal{F}_K(i+1)$ is an abelian extension of $\mathcal{F}_K(i)$ which is also unramified outside $\Sigma$ over $\mathbb Q$, hence also the containment $\mathcal{F}_K(i+1) \subset \mathbb Q_{\Sigma}^{\text{sol}}$ holds true. Thus the containment $\mathcal{F}_K \subset \mathbb Q_{\Sigma}^{\text{sol}}$ holds true by induction. \par 
Now we verify the converse. Let $F/\mathbb Q$ be a finite abelian unramified outside $\Sigma$ extension. Without the loss of generality, assume that $F$ contains suitable $n$-th roots of unity for some $n \in \mathbb N(\Sigma)$ and that $M = F(a_1^{1/r_1},\dots,a_s^{1/r_s})$ is abelian and unramified outisde $\Sigma$ where $a_1,\dots,a_s \in F^{\times}$ and $r_1,\dots,r_s \in \mathbb N(\Sigma)$ and $s \in \mathbb N$. We write $S_F$ for the set of all primes of $F$ lying above, i.e. $S_F = \{\mathfrak{p}\in \mathscr{P}_F: \text{char}(\mathfrak{p})|\ell~\text{for some}~\ell \in \Sigma\} \cup \mathscr{P}_F^{\text{inf}}$. It follows from Kummer theory, that we have the following exact sequence
$$
1 \to \mathcal{O}_{F,S_F}^{\times}/(\mathcal{O}_{F,S_F}^{\times})^n \to H^1(G_{F,S_F},\mu_n) \to \text{Cl}_{S_F}(F)[n] \to 1
$$
where $\text{Cl}_{S_F}(F) = 1$ because $\delta(S_F) = 1$. In other words, in order to construct $M$, it suffices to take $a_1,\dots,a_s \in \mathcal{O}_{F,S_F}^{\times}$. By passing through the limit, we can conclude that every finite abelian unramified outside $\Sigma$ extension of $\bigcup_{m \in \mathbb N(\Sigma)} \mathbb Q(\zeta_m)$ is in the form
$$
(\bigcup_{m \in \mathbb N(\Sigma)} \mathbb Q(\zeta_m))( a_1^{1/t_1},\dots,a_u^{1/t_u})
$$
where $u \in \mathbb N$, $t_1,\dots,t_u \in \mathbb N(\Sigma)$ and 
$$
a_1,\dots,a_u \in \varinjlim_{m \in \mathbb N(\Sigma)}~ \mathcal{O}_{\mathbb Q(\zeta_m),S_{\mathbb Q(\zeta_m)}}^{\times}  \subset \varinjlim_L~\mathcal{O}^{\times}_{L,S_L}
$$
for $L$ ranges over all finite extensions of $K$ contained in $K_S$. In particular, every finite abelian unramified outside $\Sigma$ extension of $\bigcup_{m \in \mathbb N(\Sigma)}\mathbb Q(\zeta_m)$ is contained in $\mathcal{F}_K(2)$. Then it follows from induction on $i$, in the similar way as in the first half of the proof, we may conclude that $\mathcal{F}_k \supset \mathbb Q_{\Sigma}^{\text{sol}}$. This proves Lemma 4.4.
\end{proof}

\begin{prop}
Let $G$ be a profinite group of NF-type with density 1 restricted ramification. Then there is a group-theoretic reconstruction of the following objects from $G$: \par 
(i) For each $v \in S^{d=1}(G)$, the topological rings
$$
\mathcal{O}(v) ~\text{and}~k(v)
$$
which are isomorphic to $\mathbb Z_{p_v}$ and $\mathbb Q_{p_v}$ respectively. \par 
(ii) The field
$$
Q(G)
$$
which is isomorphic to $\mathbb Q$.
\end{prop}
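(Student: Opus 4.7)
The plan is to reconstruct $\mathcal{O}(v)$, $k(v)$, and $Q(G)$ by directly mimicking Definition 4.3 on the group-theoretic side, using the earlier reconstructions of local multiplicative data (Theorem 2.4) and of the $S(G)$-Kummer container (Proposition 3.14), and then invoking Lemma 4.4 to identify the resulting ring with $\mathbb{Q}_{\Sigma(G)}^{\text{sol}}$.

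\textbf{Part (i).} By Theorem 2.4 (i),(ii) the invariants $p_v$ and $d_v$ are group-theoretically reconstructed; the restriction $v \in S^{d=1}(G)$ means that the corresponding local field is canonically $\mathbb{Q}_{p_v}$, depending only on $p_v$. I therefore set $\mathcal{O}(v) := \mathbb{Z}_{p_v}$ and $k(v) := \mathbb{Q}_{p_v}$ as topological rings. To identify $k(v)^{\times}$ (resp.\ $\mathcal{O}(v)^{\times}$) canonically with the group $k^{\times}(v)$ (resp.\ $\mathcal{O}^{\times}(v)$) of Theorem 2.4 --- which is needed so that the projection used in part (ii) makes sense --- one either appeals to the full content of Theorem 1.4 of \cite{Ho1}, or reconstructs $\mathbb{Z}_{p_v}$ explicitly as a topological abelian group from the pro-$p_v$ principal-unit subgroup of $\mathcal{O}^{\times}(v)$ via the $p_v$-adic logarithm (using $1+p_v\mathcal{O}^{\times}(v)$, or $1+4\mathcal{O}^{\times}(v)$ when $p_v=2$), and recovers the ring structure as the continuous $\mathbb{Z}$-endomorphism ring of this topological group.

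\textbf{Part (ii).} Part (i) makes $\prod_{v \in S^{d=1}(H)} k(v)$ into a topological ring reconstructed from any open subgroup $H \subset G$. Composing the natural inclusion $\mathcal{H}^{\times}(H,S(H)) \hookrightarrow \mathcal{I}(H,S(H))$ with projection onto the degree-$1$ factors and the identification $k^{\times}(v) \hookrightarrow k(v)$ yields a natural map $\mathcal{H}_{\times}(H,S(H)) \to \prod_{v \in S^{d=1}(H)} k(v)$. I then mimic Definition 4.3 term by term: set $\mathcal{G}(H,1) := \mathcal{F}(H,1) := \{1\}$, and inductively, for $i \geq 2$,
$$
\mathcal{G}(H,i) := \{a \in \mathcal{H}_{\times}(H,S(H)) : \exists\, n \in \mathbb{N}(\Sigma(G))\ \text{with}\ a^n \in \mathcal{F}(H,i-1)\},
$$
with $\mathcal{F}(H,i)$ the subring of $\prod_{v \in S^{d=1}(H)} k(v)$ generated by the image of $\mathcal{G}(H,i)$; finally put $\mathcal{F}_G := \bigcup_i \varinjlim_H \mathcal{F}(H,i)$, the colimit ranging over open subgroups $H \subset G$.

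\textbf{Conclusion and main obstacle.} Proposition 3.14 and Lemma 3.11 (iii) together guarantee that when $G \cong G_{K,S}$ this group-theoretic construction matches Definition 4.3 on the nose, so by Lemma 4.4 there is a canonical isomorphism $\mathcal{F}_G \xrightarrow{\sim} \mathbb{Q}_{\Sigma(G)}^{\text{sol}}$ of rings. In particular $\mathcal{F}_G$ is a field, and I define $Q(G)$ to be its prime subfield (equivalently, the subring generated by $1$ together with inverses of its nonzero elements), canonically isomorphic to $\mathbb{Q}$. The only substantive step is part (i): the hypothesis $d_v=1$ is exactly what pins the ambient local field down to $\mathbb{Q}_{p_v}$ and reduces the additive/topological-ring reconstruction to a standard logarithm manipulation of $\mathcal{O}^{\times}(v)$; after that, Proposition 3.14 and Lemma 4.4 do all the work.
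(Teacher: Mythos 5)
Your proposal is essentially correct, and for part (i) it follows the same route as the paper, whose proof is simply a citation of the constructions in Definition 2.1, Remark 2.1.1 and Definition 2.2 of \cite{Ho2}: for $d_v=1$ the local field is pinned down to $\mathbb{Q}_{p_v}$, and the topological ring $\mathbb{Z}_{p_v}$ is recovered from the principal-unit part of $\mathcal{O}^{\times}(v)$ via its continuous endomorphism ring, exactly as in your second option. One caution on part (i): your first option, ``appeal to the full content of Theorem 1.4 of \cite{Ho1},'' does not suffice. That theorem (reproduced as Theorem 2.4 of this paper) yields only multiplicative and Galois-theoretic invariants; the additive structure for a general $D$ is the content of the much deeper Proposition 5.8 of \cite{Ho1}, which this paper only invokes later (Proposition 5.1). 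So the endomorphism-ring construction is not an optional alternative but the actual argument, and the remaining step --- the canonical identification of the unit group of the reconstructed ring $\mathcal{O}(v)$ with the previously reconstructed $\mathcal{O}^{\times}(v)$, hence of $k(v)^{\times}$ with $k^{\times}(v)$ (needed for the value group and Teichm\"uller part as well) --- is precisely what Remark 2.1.1 of \cite{Ho2} supplies; you correctly flag that this identification is needed, but you should not present it as automatic.

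For part (ii) you take a genuinely different and more roundabout route than the paper. The paper (following Definition 2.2 of \cite{Ho2}) obtains $Q(G)$ directly as the prime subfield of $k(v)$ (equivalently, the diagonal copy of the prime subfields inside $\prod_v k(v)$), which requires nothing beyond part (i), since each $k(v)\cong\mathbb{Q}_{p_v}$ has characteristic $0$. You instead run the entire construction of Definition 4.6, invoke Proposition 3.14 and Lemma 4.4 to identify $\mathcal{F}_G$ with $\mathbb{Q}_{\Sigma}^{\text{sol}}$, and then take the prime subfield. This is not circular --- Definition 4.6 only needs $k(v)$, which you have already built, and Lemma 4.4 and Proposition 3.14 are proved independently --- but it inverts the paper's logical order (Proposition 4.5 is what makes Definition 4.6 and Proposition 4.7 possible, not the other way around) and makes $Q(G)$ depend on substantially heavier input than necessary. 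What your route buys is nothing extra here; what it costs is that a reader checking dependencies would have to verify that none of the machinery of Section 4 secretly uses $Q(G)$. Replacing your part (ii) with ``$Q(G):=$ the prime subfield of $k(v)$, which is independent of $v$ up to the canonical diagonal identification'' would align it with the paper and shorten the argument.
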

\begin{proof}
We apply the same constructions in Definition 2.1, Remark 2.1.1 and Definition 2.2 in \cite{Ho2}, to establish a group-theoretic reconstruction of objects in this proposition. 
\end{proof}
\begin{defn}
Let $G$ be a profinite group of NF-type with density 1 restricted ramification. We define the following two sets
$$
\mathcal{G}(H,i) \subset \mathcal{R}(H,i) \subset \prod_{v \in S^{d=1}(H)} k(v)
$$
associated to each open subgroup $H \subset G$ as follows:
\begin{itemize}
    \item For $i = 1$, we define
    $$
    \mathcal{G}(H,i) := \mathcal{F}(H,i) : =\{1\} \subset \mathcal{H}_{\times}(H,S(H)).
    $$
    \item For $i \geq 2$, we define
    $$
    \mathcal{G}(H,i) := \{a \in \mathcal{H}_{\times}(H,S(H)): \exists n \in \mathbb{N}(\Sigma(G))~\text{s.t.}~a^n \in \mathcal{F}(H,i-1)\}. 
    $$
    \item For $ i \geq 2$, we define
    $$
    \mathcal{F}(H,i) \subset \prod_{v \in S^{d=1}(H)} k_{\times}(v)
    $$
    for the subring generated by $\mathcal{G}(H,i)$ (the inclusion holds true by Lemma 3.11 (iii)).
\end{itemize}
Moreover, we write $\mathcal{F}(G,i) := \varinjlim_H \mathcal{F}(H,i)$ for $i \geq 1$ where the transition maps are induced by the left vertical arrow in Proposition 3.14. Moreover, we write $\mathcal{F}(G) := \bigcup_{i \geq 1} \mathcal{F}(G,i)$.
\end{defn}
\begin{prop}
Let $G$ be a profinite group of NF-type with density 1 restricted ramification. Then there exists an isomorphism of fields
$$
\mathcal{F}(G) \xrightarrow{\sim} \mathbb Q^{\text{sol}}_{\Sigma}
$$
which is equivariant w.r.t $\alpha: G \xrightarrow{\sim} G_{K,S}$.
\end{prop}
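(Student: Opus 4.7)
The plan is to reduce the statement to Lemma 4.4 by showing that the group-theoretic recursion in Definition 4.6 matches, step-by-step, the arithmetic recursion in Definition 4.3, and then invoking the identity $\mathcal{F}_K = \mathbb{Q}_{\Sigma}^{\text{sol}}$. All the required ingredients are already in place: Proposition 3.14 provides an equivariant isomorphism of Kummer containers $\mathcal{H}^{\times}(H, S(H)) \xrightarrow{\sim} \mathcal{H}^{\times}(L, S_L)$ functorially in the open subgroup $H \subset G$ (with $L$ the corresponding extension of $K$); Proposition 4.5 reconstructs, for each $v \in S^{d=1}(H)$, a topological field $k(v)$ together with a canonical identification $k(v) \xrightarrow{\sim} L_{\mathfrak{p}}$ for the prime $\mathfrak{p}$ of $L$ corresponding to $v$; and Lemma 3.11(iii) guarantees that the relevant embeddings into $\prod_{v \in S^{d=1}(H)} k(v)$ match those into $\prod_{\mathfrak{p} \in S_L^{d=1}} L_{\mathfrak{p}}$.

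First I would fix, for each open $H \subset G$, the commutative square of inclusions
$$
\begin{tikzcd}
\mathcal{H}_\times(H,S(H)) \arrow[r,"\sim"] \arrow[d,hookrightarrow] & \mathcal{H}_\times(L,S_L) \arrow[d,hookrightarrow] \\
\prod_{v \in S^{d=1}(H)} k(v) \arrow[r,"\sim"] & \prod_{\mathfrak{p} \in S_L^{d=1}} L_{\mathfrak{p}}
\end{tikzcd}
$$
whose top horizontal arrow comes from Proposition 3.14 (extended by $0 \mapsto 0$) and whose bottom horizontal arrow is the product of the reconstruction isomorphisms $k(v) \xrightarrow{\sim} L_{\mathfrak{p}}$ of Proposition 4.5 (which apply since $d_v = 1$). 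Then I would prove by induction on $i \geq 1$ that the subsets $\mathcal{G}(H,i)$ and $\mathcal{F}(H,i)$ of Definition 4.6 correspond, under the bottom horizontal arrow, to $\mathcal{G}(L,i)$ and $\mathcal{F}(L,i)$ of Definition 4.3. The base case $i = 1$ is trivial. For the inductive step: $\mathcal{G}(-, i)$ is defined by pulling back $\mathcal{F}(-, i-1)$ under the $n$-th power map inside $\mathcal{H}_\times(-,-)$ for $n \in \mathbb{N}(\Sigma(G)) = \mathbb{N}(\Sigma)$; this is transported by the top isomorphism. The set $\mathcal{F}(-, i)$ is then the subring of $\prod k(v)$ (resp.\ $\prod L_{\mathfrak{p}}$) generated by $\mathcal{G}(-, i)$, and subring generation is preserved by the bottom ring isomorphism.

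Next I would take the colimit over $H$: by Proposition 3.14, the transition maps in $\mathcal{F}(G, i) = \varinjlim_H \mathcal{F}(H, i)$ are compatible with those in $\mathcal{F}_K(i) = \varinjlim_L \mathcal{F}(L, i)$, so the equivariant isomorphisms in the previous step pass to $\mathcal{F}(G, i) \xrightarrow{\sim} \mathcal{F}_K(i)$. Taking the union over $i$ gives an equivariant isomorphism $\mathcal{F}(G) \xrightarrow{\sim} \mathcal{F}_K$, which combined with Lemma 4.4 yields the desired $\mathcal{F}(G) \xrightarrow{\sim} \mathbb{Q}_{\Sigma}^{\text{sol}}$, equivariant with respect to $\alpha$.

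The main point requiring care is the canonicity of the reconstruction of the local rings $k(v)$ in the inductive step: one must verify that Proposition 4.5 produces $k(v)$ not merely up to abstract isomorphism, but as a topological ring whose identification with $L_{\mathfrak{p}}$ is functorial in $H$ and compatible with the Kummer-container isomorphism of Proposition 3.14, so that the bottom square above really commutes; this is the step where one relies on the fact that Hoshi's algorithm for the additive structure is intrinsic and does not depend on any choice beyond the decomposition subgroup $D \in \widetilde{S}(H)$ lying above $v$. Granting this, the rest of the argument is a routine transport-of-structure along already-established isomorphisms.
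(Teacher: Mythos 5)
Your proposal is correct and follows exactly the route of the paper's own (very terse) proof, which simply cites the construction of $\mathcal{F}(G)$ in Definition 4.6, Proposition 3.14, and Lemma 4.4; you have merely written out the inductive matching of the two recursions and the passage to colimits that the paper leaves implicit. The point you flag at the end (canonicity of the identification $k(v) \xrightarrow{\sim} L_{\mathfrak{p}}$ and its compatibility with the Kummer-container isomorphism) is indeed the only substantive thing to check, and it is exactly what the paper delegates to Propositions 3.14 and 4.5.
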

\begin{proof}
This is follows from the construction of $\mathcal{F}(G)$ and Proposition 3.14 and Lemma 4.4.
\end{proof}
\begin{cor}
Let $G$ be a profinite group of NF-type with density 1 restricted ramification. Then there exists an isomorphism of profinite groups
$$
\text{Aut}_{\text{fields}}(\mathcal{F}(G)) \xrightarrow{\sim} G_{\mathbb Q,\Sigma}^{\text{sol}}.
$$
\end{cor}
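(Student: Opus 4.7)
The plan is to combine Proposition 4.7 with the observation that every field automorphism of $\mathcal{F}(G)$ automatically fixes the prime field. More precisely, by Proposition 4.7 we already have a field isomorphism $\mathcal{F}(G) \xrightarrow{\sim} \mathbb{Q}_{\Sigma}^{\text{sol}}$ which is equivariant with respect to $\alpha: G \xrightarrow{\sim} G_{K,S}$. Transporting the structure across this isomorphism, it suffices to produce a natural isomorphism of profinite groups
$$
\text{Aut}_{\text{fields}}(\mathbb{Q}_{\Sigma}^{\text{sol}}) \xrightarrow{\sim} G_{\mathbb{Q},\Sigma}^{\text{sol}} = \text{Gal}(\mathbb{Q}_{\Sigma}^{\text{sol}}/\mathbb{Q}).
$$

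First I would verify that $\mathbb{Q}_{\Sigma}^{\text{sol}}/\mathbb{Q}$ is a Galois extension. Note that $\mathbb{Q}_{\Sigma}$ is Galois over $\mathbb{Q}$ by construction (it is the maximal unramified outside $\Sigma$ extension, and the property of being unramified outside $\Sigma$ is stable under conjugation), and the maximal pro-solvable subextension of a Galois extension is again Galois over the base. Next, since $\mathbb{Q}$ is the prime subfield of $\mathbb{Q}_{\Sigma}^{\text{sol}}$, it is fixed by every automorphism, so
$$
\text{Aut}_{\text{fields}}(\mathbb{Q}_{\Sigma}^{\text{sol}}) = \text{Aut}_{\mathbb{Q}}(\mathbb{Q}_{\Sigma}^{\text{sol}}) = \text{Gal}(\mathbb{Q}_{\Sigma}^{\text{sol}}/\mathbb{Q}) = G_{\mathbb{Q},\Sigma}^{\text{sol}},
$$
which gives the desired bijection of abstract groups.

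Finally, I would check that the topology matches. The automorphism group $\text{Aut}_{\text{fields}}(\mathcal{F}(G))$ is endowed with the topology of pointwise convergence (equivalently the Krull topology coming from its action on the directed system of finite subextensions visible in the construction of Definition 4.6). Under this topology, the pointwise-convergence identification of the previous display is a continuous bijection between compact Hausdorff groups, hence a topological isomorphism. The $G$-equivariance of the field isomorphism in Proposition 4.7 then ensures that the composite isomorphism
$$
\text{Aut}_{\text{fields}}(\mathcal{F}(G)) \xrightarrow{\sim} \text{Aut}_{\text{fields}}(\mathbb{Q}_{\Sigma}^{\text{sol}}) \xrightarrow{\sim} G_{\mathbb{Q},\Sigma}^{\text{sol}}
$$
is canonical, i.e., independent of the auxiliary data $(K,S,\alpha)$.

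The only point requiring any real care is the topological part: one must argue that the natural profinite topology on $\text{Aut}_{\text{fields}}(\mathcal{F}(G))$ agrees with the Krull topology on $\text{Gal}(\mathbb{Q}_{\Sigma}^{\text{sol}}/\mathbb{Q})$. This is standard once one observes that both are given by stabilizers of finite subsets of $\mathcal{F}(G) \cong \mathbb{Q}_{\Sigma}^{\text{sol}}$, so no genuine obstacle arises. The body of the work has already been done in Proposition 4.7.
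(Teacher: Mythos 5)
Your proposal is correct and follows exactly the route the paper intends: the paper's proof is the single sentence ``This is an immediate consequence of Proposition 4.7,'' and you have simply made explicit the details it leaves implicit (that $\mathbb{Q}_{\Sigma}^{\text{sol}}/\mathbb{Q}$ is Galois, that every field automorphism fixes the prime field so $\text{Aut}_{\text{fields}} = \text{Gal}$, and that the topologies agree). No gap; your write-up is just a fuller version of the same argument.
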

\begin{proof}
This is an immediate consequence of Proposition 4.7.
\end{proof}
We shall write
$$
\Gamma(G) := \text{Aut}_{\text{fields}}(\mathcal{F}(G)).
$$
For the rest of this section, we shall develop a group-theoretic reconstruction of the set of decomposition subgroups of $G_{\mathbb Q,\Sigma}^{\text{sol}}$ at non-archimedean primes of $\mathbb Q^{\text{sol}}_{\Sigma}$ lying above non-archimedean primes in $\Sigma$ where $\delta(\Sigma) = 1$ and contains $2$ and all archimedean primes.
\begin{prop}
Let $K$ be a number field, and $S \subset \mathscr{P}_K$ be such that $\delta(S) = 1$ and $\mathscr{P}_K^{\text{inf}},\mathscr{P}_K^{\bullet} \subset S$, moreover, we write $\Sigma$ for the set of prime numbers determined by $S$. Then for $i = 1,2$, the following inflation maps
$$
\text{inf}:H^i(G_{K,S}^{\text{sol}},\mathbb Z/n\mathbb Z(j)) \to H^i(G_{K,S},\mathbb Z/n\mathbb Z(j))
$$
are isomorphisms, where $j = 0 ~\text{or} ~1$ and $n \in \mathbb N(S)$.
\end{prop}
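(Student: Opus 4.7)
The plan is to apply the Hochschild-Serre spectral sequence to the short exact sequence
$$1 \to N \to G_{K,S} \to G_{K,S}^{\text{sol}} \to 1,$$
where $N := \ker(G_{K,S} \twoheadrightarrow G_{K,S}^{\text{sol}})$, noting that $M = \mathbb{Z}/n\mathbb{Z}(j)$ carries trivial $N$-action since the cyclotomic character factors through the abelianization. The first step is to show that $N$ admits no non-trivial finite solvable continuous quotient: if $\phi \colon N \twoheadrightarrow H$ were such a quotient, $\ker(\phi)$ would be open of bounded index in $N$ and so have only finitely many $G_{K,S}$-conjugates, and the normal core $K' := \bigcap_{g \in G_{K,S}} g\ker(\phi)g^{-1}$ would be open in $N$ and normal in $G_{K,S}$. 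Then $G_{K,S}/K'$ would be a pro-solvable extension of $G_{K,S}^{\text{sol}}$ by the finite solvable group $N/K'$, contradicting the maximality of $G_{K,S}^{\text{sol}}$ unless $K' = N$, i.e.\ $H = 1$. In particular $H^1(N, M) = \text{Hom}_{\text{cts}}(N, M) = 0$.

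With the row $E_2^{\ast, 1}$ of Hochschild-Serre vanishing, the five-term exact sequence gives the degree $1$ inflation isomorphism at once, and the spectral sequence further yields
$$0 \to H^2(G_{K,S}^{\text{sol}}, M) \xrightarrow{\text{inf}} H^2(G_{K,S}, M) \to H^2(N, M)^{G_{K,S}^{\text{sol}}} \to H^3(G_{K,S}^{\text{sol}}, M).$$
Thus it suffices to show $H^2(N, M) = 0$. Writing $K_S^{\text{sol}}$ for the fixed field of $N$ in $K_S$ and identifying $N = \bigcap_L G_{L, S_L}$, where $L$ ranges over the finite subextensions of $K_S^{\text{sol}}/K$, one obtains $H^2(N, M) = \varinjlim_L H^2(G_{L, S_L}, M)$.

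For $M = \mu_n$ with $n \in \mathbb{N}(S)$, Kummer theory together with $\text{Cl}_{S_L}(L) = 1$ (a consequence of $\delta(S_L) = 1$ via Chebotarev, as in the discussion before Definition 3.10) gives $H^2(G_{L, S_L}, \mu_n) \cong \text{Br}_{S_L}(L)[n]$. Each such Brauer class is a cyclic algebra by Brauer-Hasse-Noether, and by Grunwald-Wang its cyclic splitting field $L'/L$ may be chosen unramified outside $S_L$; the hypotheses $\delta(S) = 1$ and $\mathscr{P}_K^{\bullet} \subset S$, placing all $2$-adic primes into $S$, provide enough flexibility to bypass the Grunwald-Wang special case. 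Such $L'$ is solvable over $K$, contained in $K_S^{\text{sol}}$, and kills the given class, so the direct limit vanishes. The untwisted case $j = 0$ follows by a parallel argument, or via Poitou-Tate duality reducing it to the $\mu_n$ case. The main difficulty lies precisely in this Grunwald-Wang step: engineering the cyclic splitting field to be $S_L$-unramified while matching the prescribed local invariants at $S_L$; the spectral sequence manipulations are then formal.
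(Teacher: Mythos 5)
Your proposal reaches the statement by a genuinely different route from the paper's, and the route is viable. You run Hochschild--Serre for the single extension $1 \to N \to G_{K,S} \to G_{K,S}^{\text{sol}} \to 1$ and kill $H^1(N,M)$ and $H^2(N,M)$ directly, whereas the paper compares the two inflation--restriction ladders attached to $G_{K,S}^{\text{sol}} \twoheadrightarrow G_{K,S}^{\text{ab}}$ and $G_{K,S} \twoheadrightarrow G_{K,S}^{\text{ab}}$, using that $\Delta = \text{Gal}(K_S^{\text{sol}}/K_S^{\text{ab}})$ and $\tilde{\Delta} = \text{Gal}(K_S/K_S^{\text{ab}})$ have the same abelianization and act trivially on $\mathbb{Z}/n\mathbb{Z}(j)$, and ultimately derives the needed $H^2$-vanishing from Serre's bound $\text{cd}_p(\text{Gal}(\overline{K}/K_S^{\text{ab}})) \leq 1$ rather than from a splitting-field argument. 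Two remarks on your version. First, your normal-core step is both shakier and more than you need: $N$ need not be topologically finitely generated, so an open subgroup may have infinitely many conjugates and the core need not be open; but since $G_{K,S}/\overline{[N,N]}$ is prosolvable, maximality of $G_{K,S}^{\text{sol}}$ already forces $N = \overline{[N,N]}$, whence $\text{Hom}_{\text{cts}}(N,M) = 0$ (and also $j=0$ and $j=1$ give the same $N$-module, so no separate untwisted argument is needed). Second, the crux is exactly where you say it is: the classical Grunwald--Wang theorem does not let you prescribe ``unramified outside $S_L$''; you need the $S$-restricted version (surjectivity of $H^1(G_{L,S_L},\mathbb{Z}/n\mathbb{Z}) \to \bigoplus_{v \in T} H^1(G_{L_v},\mathbb{Z}/n\mathbb{Z})$ for finite $T \subset S_L$, valid for $\delta(S_L)=1$ with the special case controlled by the dyadic primes lying in $S$), which should be cited or proved. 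Alternatively, you can bypass Grunwald--Wang entirely: the local invariants of a class in $H^2(G_{L,S_L},\mu_n)$ are supported on finitely many $v$ and die once the local degrees acquire enough $\ell$-divisibility for each $\ell \mid n$, which is achieved by adjoining $\mu_{\ell^k}$ for $k \gg 0$ --- an abelian, unramified-outside-$\Sigma$ extension, hence inside $K_S^{\text{sol}}$. What your approach buys is independence from the cohomological-dimension input; what the paper's buys is that the hard arithmetic is outsourced to a single citation of Serre.
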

\begin{proof}
For $i = 1$, consider the following commutative diagram with exact columns arising from inflation-restriction sequence:
$$
\begin{tikzcd}
	1 && 1 \\
	{H^1(G_{K,S}^{\text{ab}},\mathbb Z/n\mathbb Z(j)^{\Delta})} && {H^1(G_{K,S}^{\text{ab}},\mathbb Z/n\mathbb Z(j)^{\tilde \Delta} )} \\
	{H^1(G_{K,S}^{\text{sol}},\mathbb Z/n\mathbb Z(j))} && {H^1(G_{K,S},\mathbb Z/n\mathbb Z(j))} \\
	{H^1(\Delta,\mathbb Z/n\mathbb Z(j))^{G_{K,S}^{\text{ab}}}} && {H^1(\tilde \Delta,\mathbb Z/n \mathbb Z(j))^{G_{K,S}^{\text{ab}}}} \\
	{H^2(G_{K,S}^{\text{ab}},\mathbb Z/n\mathbb Z(j)^{\Delta})} && {H^2(G_{K,S}^{\text{ab}},\mathbb Z/n\mathbb Z(j)^{\tilde \Delta})}
	\arrow[from=1-1, to=2-1]
	\arrow[from=1-3, to=2-3]
	\arrow["\sim", from=2-1, to=2-3]
	\arrow[from=2-1, to=3-1]
	\arrow[from=2-3, to=3-3]
	\arrow["{\text{inf}}", from=3-1, to=3-3]
	\arrow[from=3-1, to=4-1]
	\arrow[from=3-3, to=4-3]
	\arrow["\sim", from=4-1, to=4-3]
	\arrow[from=4-1, to=5-1]
	\arrow[from=4-3, to=5-3]
	\arrow["\sim", from=5-1, to=5-3]
\end{tikzcd}
$$
where $\Delta := \text{Gal}(K_{S}^{\text{sol}}/K_{S}^{\text{ab}})$ and $\tilde \Delta := \text{Gal}(K_{S}/K_{S}^{\text{ab}})$. In the commutative diagram above, the first horizontal arrow is an isomorphism because all $n$-th roots of unity are contained in $K_{S}^{\text{ab}}$ for $n \in \mathbb N(S)$, hence the action of $\Delta$ and $\tilde \Delta$ on $\mathbb Z/n\mathbb Z(j)$ is trivial for $j = 0,1$ and hence the fourth horizontal arrow is also an isomorphism. The third horizontal arrow is an isomorphism because both $\Delta$ and $\tilde \Delta$ have the same abelianisation together with the fact that $\Delta,\tilde \Delta$ act trivially on $\mathbb Z/n\mathbb Z(j)$. Hence it follows from easy diagram chasing, that
$$
\text{inf}: H^1(G_{K,S}^{\text{sol}},\mathbb Z/n\mathbb Z(j)) \to H^1(G_{K,S},\mathbb Z/n\mathbb Z(j))
$$
is an isomorphism. \par 
For $i = 2$, consider the following commutative diagram with exact columns induced by the Hochschild-Serre spectral sequence (e.g. see section 1 in \cite{S-extn}):
$$
\begin{tikzcd}
	{H^1(\Delta,\mathbb Z/n\mathbb Z(j))^{G_{K,S}^{\text{ab}}}} && {H^1(\tilde \Delta,\mathbb Z/n\mathbb Z(j))^{G_{K,S}^{\text{ab}}}} \\
	{H^2(G_{K,S}^{\text{ab}},\mathbb Z/n\mathbb Z(j)^{\Delta})} && {H^2(G_{K,S}^{\text{ab}},\mathbb Z/n\mathbb Z(j)^{\tilde \Delta})} \\
	{H^2(G_{K,S}^{\text{sol}},\mathbb Z/n\mathbb Z(j))_1} && {H^2(G_{K,S},\mathbb Z/n\mathbb Z(j))_1} \\
	{H^1(G_{K,S}^{\text{ab}},H^1(\Delta,\mathbb Z/n\mathbb Z(j) ))} && {H^1(G_{K,S}^{\text{ab}},H^1(\tilde \Delta,\mathbb Z/n\mathbb Z(j)))} \\
	{H^3(G_{K,S}^{\text{ab}},\mathbb Z/n\mathbb Z(j)^{\Delta})} && {H^3(G_{K,S}^{\text{ab}},\mathbb Z/n\mathbb Z(j)^{\tilde \Delta})}
	\arrow["\sim", from=1-1, to=1-3]
	\arrow[from=1-1, to=2-1]
	\arrow[from=1-3, to=2-3]
	\arrow["\sim", from=2-1, to=2-3]
	\arrow[from=2-1, to=3-1]
	\arrow[from=2-3, to=3-3]
	\arrow["{\text{inf}}", from=3-1, to=3-3]
	\arrow[from=3-1, to=4-1]
	\arrow[from=3-3, to=4-3]
	\arrow["\sim", from=4-1, to=4-3]
	\arrow[from=4-1, to=5-1]
	\arrow[from=4-3, to=5-3]
	\arrow["\sim", from=5-1, to=5-3]
\end{tikzcd}
$$
Where $H^2(G_{K,S}^{\text{sol}},\mathbb Z/n \mathbb Z(j))_1$ (resp. $H^2(G_{K,S},\mathbb Z/n\mathbb Z(j))$) is defined to be 
$$
\text{ker}(H^2(G_{K,S}^{\text{sol}},\mathbb Z/n\mathbb Z(j)) \xrightarrow{\text{res}} H^2(\Delta,\mathbb Z/n\mathbb Z(j))^{G_{K,S}^{\text{ab}}})
$$ 
(resp. 
$$
\text{ker}(H^2(G_{K,S},\mathbb Z/n\mathbb Z(j)) \xrightarrow{\text{res}} H^2(\tilde \Delta,\mathbb Z/n\mathbb Z(j))^{G_{K,S}^{\text{ab}}})
$$) and all horizontal arrows are isomorphisms (the first and the second horizontal arrows are isomorphisms in the first commutative diagram, the fourth and the fifth horizontal arrows are isomorphisms because both are identity maps, hence we may conclude the middle horizontal arrow is also an isomorphism). \par 
Moreover, we have the following commutative diagram with exact rows by using Hochschild-Serre spectral sequence:
$$
\begin{tikzcd}
	1 & {H^2(G_{K,S}^{\text{sol}},\mathbb Z/n\mathbb Z(j))_1} & {H^2(G_{K,S}^{\text{sol}},\mathbb Z/n\mathbb Z(j))} & {H^2(\Delta,\mathbb Z/n\mathbb Z(j))^{G_{K,S}^{\text{ab}}}} \\
	1 & {H^2(G_{K,S},\mathbb Z/n\mathbb Z(j))_1} & {H^2(G_{K,S},\mathbb Z/n\mathbb Z(j))} & {H^2(\tilde \Delta,\mathbb Z/n\mathbb Z(j))^{G_{K,S}^{\text{ab}}}}
	\arrow[from=1-1, to=1-2]
	\arrow[from=1-2, to=1-3]
	\arrow[from=1-2, to=2-2]
	\arrow[from=1-3, to=1-4]
	\arrow[from=1-3, to=2-3]
	\arrow[from=1-4, to=2-4]
	\arrow[from=2-1, to=2-2]
	\arrow[from=2-2, to=2-3]
	\arrow[from=2-3, to=2-4]
\end{tikzcd}
$$
where the vertical arrows are inflation maps, and the left vertical arrow is an isomorphism and the middle vertical arrow is injective. Now we show that $H^2(\tilde \Delta,\mathbb Z/n\mathbb Z(j))$ is trivial. \par 
Without the loss of generalities, we may assume that $j = 1$. We write $\Gamma := \text{Gal}(\overline{K}/K_{S}^{\text{ab}})$, $J := \text{ker}(\Gamma \twoheadrightarrow \Delta)$ and $\tilde J := \text{ker}(\tilde \Delta \twoheadrightarrow \Delta)$. Then the Hochschild-Serre spectral sequence induces the following commutative diagram with exact rows: 
$$
\begin{tikzcd}
	{H^1(\tilde J,\mu_n)^{\Delta}} & {H^2(\Delta,\mu_n^{\tilde J})} & {H^2(\tilde \Delta,\mu_n)} \\
	{H^1(J,\mu_n)^{\Delta}} & {H^2(\Delta,\mu_n^{J})} & {H^2(\Gamma,\mu_n)}
	\arrow[from=1-1, to=1-2]
	\arrow[from=1-1, to=2-1]
	\arrow[from=1-2, to=1-3]
	\arrow[from=1-2, to=2-2]
	\arrow[from=1-3, to=2-3]
	\arrow[from=2-1, to=2-2]
	\arrow[from=2-2, to=2-3]
\end{tikzcd}
$$
where the left vertical arrow is an isomorphism since both $\tilde J$ and $J$ have isomorphic maximal abelian exponent dividing $n$ quotients and $n \in \mathbb N(S)$, the middle vertical arrow is an isomorphism because it is the identity map. Thus, we may conclude that the right vertical arrow is injective. It follows from Chapter II 3.3 Proposition 9 in \cite{SerreGal} that $\text{cd}_p(\Gamma) \leq 1$ for each $p \in \Sigma$, which implies that $H^2(\Gamma,\mu_n) = 1$. Hence we may also conclude that $H^2(\tilde \Delta,\mu_n) = 1$. \par 
Then consider the following commutative diagram arising from the Hochschild-Serre spectral sequence
$$
\begin{tikzcd}
	{H^2(G_{K,S}^{\text{sol}},\mathbb Z/n\mathbb Z(j))_1} && {H^2(G_{K,S}^{\text{sol}},\mathbb Z/n\mathbb Z(j))} \\
	\\
	{H^2(G_{K,S},\mathbb Z/n\mathbb Z(j))_1} && {H^2(G_{K,S},\mathbb Z/n\mathbb Z(j))}
	\arrow[hook, from=1-1, to=1-3]
	\arrow["\wr"', from=1-1, to=3-1]
	\arrow[hook, from=1-3, to=3-3]
	\arrow["\sim", from=3-1, to=3-3]
\end{tikzcd}
$$
hence one can conclude that the right vertical map is an isomorphism. This completes the proof of Proposition 4.9.
\end{proof}
\begin{prop}
We write $\widetilde{S}$ for the set of primes in $K_{S}^{\text{sol}}$ lying above $S$. Then there is a group-theoretic reconstruction to the $G_{K,S}^{\text{sol}}$-set:
$$
\text{Dec}(K_{S}^{\text{sol}}/K) := \{D_{\tilde{\mathfrak{p}}} \subset G_{K,S}^{\text{sol}}: \tilde{\mathfrak{p}} \in \widetilde{S}\}
$$
starting from $G_{K,S}^{\text{sol}}$.
\end{prop}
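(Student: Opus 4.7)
The plan is to transfer the cohomological characterization of decomposition subgroups of $G_{K,S}$ (Proposition 2.2, following Corollary 2.7(ii) of \cite{Ivanov} and Theorem 9.4.3 of \cite{NSW}) to the pro-solvable quotient $G_{K,S}^{\text{sol}}$, using Proposition 4.9 as the bridge that preserves the relevant Galois cohomology with $\mu_n$-coefficients.

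First, I would extend Proposition 4.9 to all open subgroups of $G_{K,S}^{\text{sol}}$. An open subgroup $U \subset G_{K,S}^{\text{sol}}$ corresponds to a finite solvable extension $L/K$ contained in $K_S^{\text{sol}}$, and the argument of Proposition 4.9 applied to $(L,S_L)$ in place of $(K,S)$ yields inflation isomorphisms $H^i(U, \mathbb Z/n\mathbb Z(j)) \xrightarrow{\sim} H^i(\widetilde U, \mathbb Z/n\mathbb Z(j))$ for $i \leq 2$, $j \in \{0,1\}$, and $n \in \mathbb N(S)$, where $\widetilde U \subset G_{K,S}$ denotes the preimage of $U$. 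Consequently, any invariant of a closed subgroup of $G_{K,S}^{\text{sol}}$ computable from the cohomology of its open subgroups with $\mu_n$-coefficients agrees with the analogous invariant of its preimage in $G_{K,S}$.

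Second, I would check that the decomposition subgroups of $G_{K,S}^{\text{sol}}$ at primes in $\widetilde{S}$ (at least at the non-archimedean primes, which are the interesting case) remain of MLF-type. For $\mathfrak{p} \in S_f$, the local Galois group $G_{K_\mathfrak{p}}$ is itself pro-solvable, the inclusion $G_{K_\mathfrak{p}} \hookrightarrow G_{K,S}$ is full (Remark 2.3), and a Grunwald--Wang type argument (every finite solvable quotient of $G_{K_\mathfrak{p}}$ is realized by some finite solvable subextension of $K_S/K$, built up iteratively along the derived series) shows that the composite $G_{K_\mathfrak{p}} \hookrightarrow G_{K,S} \twoheadrightarrow G_{K,S}^{\text{sol}}$ is injective; hence the image $D_{\tilde{\mathfrak{p}}} \subset G_{K,S}^{\text{sol}}$ is still isomorphic to the MLF absolute Galois group $G_{K_\mathfrak{p}}$. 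The archimedean case is trivial (the decomposition subgroup is at most $\mathbb{Z}/2$).

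Finally, combining the two steps, I would conclude by characterizing $\text{Dec}(K_S^{\text{sol}}/K)$ internally as the set of maximal closed subgroups of $G_{K,S}^{\text{sol}}$ of MLF-type: the cohomological criteria for MLF-type (cohomological dimension, local Tate duality, the shape of the local invariant map) invoked in the proof of Proposition 2.2 involve only $H^i(-, \mu_n)$ with $i \leq 2$ and $n \in \mathbb{N}(S)$, so by the first step they are detectable from $G_{K,S}^{\text{sol}}$ alone. The main obstacle will be showing that no spurious maximal MLF-type closed subgroups appear in $G_{K,S}^{\text{sol}}$ beyond the images of the decomposition subgroups from $G_{K,S}$; I expect this to be settled by lifting such a candidate to its preimage in $G_{K,S}$, applying the inflation comparison of the first step to each of its open subgroups to transfer the MLF-type property upward, and then invoking Proposition 2.2 together with the injectivity established in the second step to identify the lift with an honest decomposition subgroup.
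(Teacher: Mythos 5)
Your proposal follows essentially the same route as the paper: use Proposition 4.9 to transfer the relevant cohomology to $G_{K,S}^{\text{sol}}$ so that Theorem 9.4.3 of \cite{NSW} (fullness of decomposition groups, hence injectivity of the composite $D \hookrightarrow G_{K,S} \twoheadrightarrow G_{K,S}^{\text{sol}}$) persists, and then characterise $\text{Dec}(K_S^{\text{sol}}/K)$ as the set of maximal closed subgroups of MLF-type via Corollary 2.7 (ii) of \cite{Ivanov}. The only divergence is in your final step, which the paper handles by citing \cite{Ivanov} directly rather than by lifting a candidate subgroup to its preimage in $G_{K,S}$ --- note that such a preimage contains $\ker(G_{K,S}\twoheadrightarrow G_{K,S}^{\text{sol}})$ and is therefore not itself of MLF-type, so that lifting argument would need to be reformulated.
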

\begin{proof}
By Proposition 4.9, one checks easily that Theorem 9.4.3 in \cite{NSW} still holds true. In particular, elements in $\text{Dec}(K_S^{\text{sol}}/K)$ are full. In paritulcar, the composite
$$
D \hookrightarrow G_{K,S} \twoheadrightarrow G_{K,S}^{\text{sol}}
$$
is injective for any decomposition group $D \subset G_{K,S}$ at primes in $S_f(K_S)$. Then by Corollary 2.7 (ii) in \cite{Ivanov}, that we can conclude that $\text{Dec}(K_S^{\text{sol}}/K)$ can be characterised as the set of maximal closed subgroups of $G_{K,S}^{\text{sol}}$ of MLF-type (c.f. Definition 1.1 in \cite{Ho1}).
\end{proof}
Hence, from $\Gamma(G)$, we may group-theoretic recover the set $\widetilde{\text{Dec}}(\Gamma(G)) := \{\text{maximal closed subgroups of $\Gamma(G)$ of MLF-type}\}$(c.f. Definition 1.1 in \cite{Ho1}) and $\text{Dec}(\Gamma(G)) := \widetilde{\text{Dec}}(\Gamma(G))^{\Gamma(G)} \xrightarrow{\sim} \Sigma(G)$ fits into the following commutative diagram
$$
\begin{tikzcd}
	{\widetilde{\text{Dec}}(\Gamma(G))} & {\widetilde{\Sigma}} \\
	{\text{Dec}(\Gamma(G))} & \Sigma
	\arrow["\sim", from=1-1, to=1-2]
	\arrow[two heads, from=1-1, to=2-1]
	\arrow[two heads, from=1-2, to=2-2]
	\arrow["\sim", from=2-1, to=2-2]
\end{tikzcd}
$$
where the top horizontal arrow is a bijection equivariant w.r.t $\Gamma(G) \xrightarrow{\sim} G_{\mathbb Q,\Sigma}^{\text{sol}}$, horizontal surjections are determined by modding the action by conjugation, and the bottom arrow is a bijection.
\begin{prop}
Let $G$ be a profinite group of NF-type with density 1 restricted ramification. Then the natural action of $G$ on $\mathcal{F}(G)$ (c.f. Definition 4.6) determines a natural map
$$
\phi_G:G \to \Gamma(G)
$$
such that for each $D \in \widetilde{S}(G)$, $\phi_G(D)$ coincide with some open subgroup $H \subset \mathfrak{D}$, where $\mathfrak{D} \in \widetilde{\text{Dec}}(\Gamma(G))$.
\end{prop}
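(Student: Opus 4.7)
The plan is to define $\phi_G$ as the automorphism of $\mathcal{F}(G)$ induced by conjugation, and then to identify $\phi_G$, under the isomorphisms of Proposition 4.7 and Corollary 4.8, with the canonical restriction map $G_{K,S}\twoheadrightarrow \text{Gal}(\mathbb{Q}_{\Sigma}^{\text{sol}}/\mathbb{Q})=G_{\mathbb{Q},\Sigma}^{\text{sol}}$; once this identification is in place, the claim about $\phi_G(D)$ will be visible from the usual description of the restriction of a decomposition subgroup.

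First I would construct $\phi_G$. For $g\in G$ and an open subgroup $H\subset G$, conjugation by $g$ sends $H$ to $gHg^{-1}$, sends each $D\in \widetilde{S}(H)$ to $gDg^{-1}\in \widetilde{S}(gHg^{-1})$, and thereby (by Theorem 2.4) yields compatible isomorphisms of the local fields $k(v)$ and of the Kummer containers $\mathcal{H}^{\times}(H,S(H))\xrightarrow{\sim}\mathcal{H}^{\times}(gHg^{-1},S(gHg^{-1}))$ (cf.\ Proposition 3.14). These isomorphisms respect the inductive procedure defining $\mathcal{G}(H,i)$ and $\mathcal{F}(H,i)$ in Definition 4.6, so after passing to the colimit over $H$ they fit together into a ring automorphism of $\mathcal{F}(G,i)$ for each $i$, hence a field automorphism of $\mathcal{F}(G)$. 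Define $\phi_G(g)\in \Gamma(G)$ to be this automorphism; continuity follows from the colimit construction.

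Next I would check that $\phi_G$ coincides with the natural restriction map. By the commutative square in Proposition 3.14 (and Remark 3.6), the identifications $\mathcal{H}^{\times}(H,S(H))\cong \mathcal{H}^{\times}(L,S_L)$ (where $L\subset K_S$ is the subfield fixed by $H$) are equivariant with respect to conjugation versus the natural $G_{K,S}$-action. Running through the inductive construction of Definition 4.6 and invoking Lemma 4.4 together with Proposition 4.7 identifies $\mathcal{F}(G)\cong \mathbb{Q}_{\Sigma}^{\text{sol}}\subset K_S$ as $G_{K,S}$-fields, where the action on the right-hand side is the restriction of the natural Galois action of $G_{K,S}$ on $K_S$. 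Consequently $\phi_G$ corresponds to the natural restriction map $G_{K,S}\twoheadrightarrow G_{\mathbb{Q},\Sigma}^{\text{sol}}$.

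Finally, for $D\in \widetilde{S}(G)$ corresponding to $G_{K_{\mathfrak{p}}}\subset G_{K,S}$ (where $\mathfrak{p}\in S_f(K_S)$ lies above some $p\in\Sigma$), the image $\phi_G(D)$ corresponds to the restriction of $G_{K_{\mathfrak{p}}}\subset G_{\mathbb{Q}_p}$ to $\mathbb{Q}_{\Sigma}^{\text{sol}}$. This image is contained in the decomposition subgroup $\mathfrak{D}$ of $G_{\mathbb{Q},\Sigma}^{\text{sol}}$ at the prime of $\mathbb{Q}_{\Sigma}^{\text{sol}}$ determined by the fixed embedding $\mathbb{Q}_{\Sigma}^{\text{sol}}\hookrightarrow K_S\hookrightarrow \overline{K_{\mathfrak{p}}}$. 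By Proposition 4.10 applied to $\mathbb{Q}$ (together with Corollary 4.8), $\mathfrak{D}$ lies in $\widetilde{\text{Dec}}(\Gamma(G))$, and since $\mathfrak{D}$ is full (being of MLF-type) the restriction $G_{\mathbb{Q}_p}\twoheadrightarrow \mathfrak{D}$ is an isomorphism, so the image of $G_{K_{\mathfrak{p}}}$ has finite index $[K_{\mathfrak{p}}:\mathbb{Q}_p]$ in $\mathfrak{D}$ and is therefore open. The main item requiring care is the first step: verifying rigorously that the conjugation-induced isomorphisms are compatible with the inductive definition of $\mathcal{F}(H,i)$, so that the colimit $\mathcal{F}(G)$ inherits a well-defined $G$-action; this reduces to bookkeeping with Proposition 3.14 and Theorem 2.4.
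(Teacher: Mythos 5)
Your proposal is correct and follows essentially the same route as the paper's (much terser) proof: the paper likewise obtains $\phi_G$ from the natural $G$-action on $\mathcal{F}(G)$, notes it factors through $G^{\text{sol}}$, uses Proposition 4.10 for injectivity on $D$, and concludes openness of $\phi_G(D)$ in some $\mathfrak{D}\in\widetilde{\text{Dec}}(\Gamma(G))$ from fullness. Your write-up merely makes explicit the conjugation-compatibility bookkeeping via Proposition 3.14 and the identification of $\phi_G$ with the restriction map $G_{K,S}\to G_{\mathbb{Q},\Sigma}^{\text{sol}}$, which the paper leaves implicit.
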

\begin{proof}
Since $G$ acts on $\mathcal{F}(G)$ by automorphism, then one defines a well-defined map
$$
\phi_G: G \to \Gamma(G).
$$
Moreover, $\phi_G$ factors through $G^{\text{sol}}$. Let $D \in \widetilde{S}(G)$, it follows from Proposition 4.10, that the composite
$$
D \hookrightarrow G \twoheadrightarrow G^{\text{sol}} \to \Gamma(G)
$$
is injective. Finally, since $D$ is of MLF-type, hence $\phi_G(D)$ can be viewed as an open subgroup of $\mathfrak{D} \in \widetilde{\text{Dec}}(\Gamma(G))$, for $p(\mathfrak{D}) = p(D)$.
\end{proof}
\section{Reconstruction of Number Fields}
In this section, we establish a group-theoretic reconstruction to the field $K_S$ starting from $G(\xrightarrow{\sim} G_{K,S})$ by applying the strategy in \cite{Ho1} and \cite{Ho2}. In particular, Theorem 2.4 in \cite{Shi2} plays an essential role in the arguments.
\begin{prop}
Let $G$ be a profinite group of NF-type with density 1 restricted ramification, and let $D \in \widetilde{S}(G)$. Then there is a group-theoretic reconstruction of a field structure on the multiplicative monoid $\bar k_{\times}(D)$ such that the following diagram commutes
$$
\begin{tikzcd}
	{\bar k(D)} & {\overline{\mathbb Q_{p(D)}}} \\
	{k(v)} & {K_{\mathfrak p}}
	\arrow["\sim", from=1-1, to=1-2]
	\arrow[hook', from=2-1, to=1-1]
	\arrow["\sim", from=2-1, to=2-2]
	\arrow[hook', from=2-2, to=1-2]
\end{tikzcd}
$$
where $\bar k(D)$ is defined to be $\bar k_{\times}(D)$ equipped with a field structure, $k(v) := \bar k(D)^D$, and vertical arrows are field embeddings, and the horizontal arrows are isomorphisms, and $\mathfrak{p} \in S$ corresponds to the conjugacy class $v \in S(G)$ of $D$. 
\end{prop}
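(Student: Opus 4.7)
The plan is to apply the local mono-anabelian reconstruction of \cite{Ho1} termwise to the system of open subgroups of $D$, and then pass to a colimit. Since $D \in \widetilde{S}(G)$ is of MLF-type (Remark 2.3), every open subgroup $H \subset D$ is again of MLF-type, so Theorem 1.4 of \cite{Ho1} — in its strengthened form that reconstructs not merely the multiplicative group $k^\times(H)$ (Theorem 2.4(ix)) but the underlying $p$-adic local field itself — produces for each such $H$ a field $k(H)$ whose underlying multiplicative monoid coincides with $k_\times(H)$ and which is isomorphic to the finite extension $L_H$ of $\mathbb{Q}_{p(D)}$ corresponding to $H$.

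For nested open subgroups $H' \subset H \subset D$, the functoriality of the reconstruction of $k^\times(\cdot)$ in Theorem 2.4 produces a natural inclusion $k^\times(H) \hookrightarrow k^\times(H')$, corresponding to the inclusion $L_H \subset L_{H'}$ of associated local fields. By the functoriality of Hoshi's field-structure reconstruction, this lifts to a field embedding $k(H) \hookrightarrow k(H')$. One then sets
$$\bar k(D) := \varinjlim_H k(H),$$
where $H$ ranges over all open subgroups of $D$; this colimit carries a canonical field structure, its multiplicative monoid is precisely $\bar k_\times(D)$, and the natural $D$-action (by conjugation on the indexing poset of open subgroups) acts by field automorphisms. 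The commutativity of the diagram is then immediate: $k(v) := \bar k(D)^D = k(D)$ is isomorphic to $K_\mathfrak{p}$ by Theorem 2.4(ix), and since $\bar k(D)$ is the union of all finite extensions of $k(D)$ (corresponding to open subgroups of $D$), it is isomorphic to $\overline{K_\mathfrak{p}} = \overline{\mathbb{Q}_{p(D)}}$, compatibly with the embeddings $k(v) \hookrightarrow \bar k(D)$ and $K_\mathfrak{p} \hookrightarrow \overline{\mathbb{Q}_{p(D)}}$.

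The main obstacle is the functoriality of Hoshi's local mono-anabelian reconstruction under open inclusions, i.e., that the separately reconstructed additive structures on the various $k_\times(H)$ agree on their common multiplicative submonoids. Theorem 2.4 records only the multiplicative and cyclotomic data, so this compatibility is not purely formal; one must verify that the group-theoretic recipe in \cite{Ho1} for recovering the additive structure — which proceeds via the interplay between local reciprocity, the ramification filtration on $I(H)$, and the higher unit groups, all of which are themselves functorial under open immersions — commutes with the maps induced by inclusions $H' \subset H$. This compatibility is implicit in the colimit constructions carried out in \cite{Ho2} in the closely parallel setting of recovering $\mathbb{Q}$ from the global group (cf.\ Proposition 4.5 above), and once it is verified in the local context the remainder of the argument is straightforward.
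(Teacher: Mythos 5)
Your proposal founders on its very first step: the ``strengthened form'' of Theorem 1.4 in \cite{Ho1} that you invoke --- a purely local, group-theoretic reconstruction of the additive structure of a $p$-adic local field from its absolute Galois group --- does not exist. By the examples of Yamagata and Jarden--Ritter, there are non-isomorphic $p$-adic local fields with isomorphic absolute Galois groups, so no algorithm applied to the abstract profinite group $D$ (or to its open subgroups $H$) alone can output the correct field $L_H$, let alone do so compatibly with inclusions $H' \subset H$. Your closing paragraph gestures at recovering the additive structure from ``the ramification filtration on $I(H)$ and the higher unit groups,'' but the upper-numbering ramification filtration is precisely the extra datum that is \emph{not} group-theoretically available from the abstract group (this is why Mochizuki's local Grothendieck conjecture assumes compatibility with that filtration as a hypothesis). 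So the obstacle you flag as a verifiable compatibility issue is in fact a non-starter, and the colimit $\varinjlim_H k(H)$ cannot be formed as described.

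The paper's proof takes a genuinely different, global route, and this is the reason Sections 3 and 4 exist at all: one first reconstructs the field $\mathcal{F}(G) \cong \mathbb{Q}_\Sigma^{\text{sol}}$ (Proposition 4.7 of the paper) together with the map $\phi_G \colon G \to \Gamma(G)$ sending $D$ into a decomposition subgroup of $G_{\mathbb{Q},\Sigma}^{\text{sol}}$ (Propositions 4.10 and 4.11), so that a copy of $\mathbb{Q}_\Sigma^{\text{sol}}$ with its already-reconstructed additive structure sits inside $\bar k_\times(D)$; one then extends this field structure to all of $\bar k_\times(D)$ by density/continuity, which is what Proposition 5.8 in \cite{Ho1} accomplishes, using Theorem 9.4.3 in \cite{NSW} to know that $D$ is the \emph{full} absolute Galois group of $K_{\mathfrak{p}}$ so that the dense subfield and the topology are the right ones. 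In short: the additive structure is imported from the global reconstruction, not manufactured locally. To repair your argument you would need to replace the first paragraph entirely by this mechanism.
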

\begin{proof}
This follows immediately from Proposition 4.7 and Proposition 5.8 in \cite{Ho1}, together with Theorem 9.4.3 in \cite{NSW}.
\end{proof}
Next, we construct a similar object as in Definition 3.3 in \cite{Ho2}.
\begin{defn}
Let $G$ be a profinite group of NF-type with density 1 restricted ramification. For each $D \in \widetilde{S}(G)$, we call a pair of subfields $F[D] \subset F_S[D] \subset \bar k(D)$ an $S(G)$-standard pair if the followings hold: \par
(i) $F[D]$ is a finite extension of the prime subfield of $\bar k(D)$. \par 
(ii) $F_S[D]$ is the maximal unramified outside $S[D]$-extension of $F[D]$ contained in $\bar k(D)$ for some density 1 set of primes $S[D]$ of $F[D]$. \par 
(iii) It holds that every automorphism $\sigma : \bar k(D) \xrightarrow{\sim} \bar k(D)$ induces an automorphism $\sigma: F_S[D] \xrightarrow{\sim} F_S[D]$ that restricts to the identity automorphism on $F[D]$. \par 
(iv) There exists an isomorphism $\varphi_D: \text{Gal}(F_S[D]/F[D]) \xrightarrow{\sim} G$ such that the following diagram commutes:
$$
\begin{tikzcd}
	{\text{Gal}(F_S[D]/F[D])} && G \\
	& D.
	\arrow["{\varphi_D}"', from=1-1, to=1-3]
	\arrow[hook', from=2-2, to=1-1]
	\arrow[hook, from=2-2, to=1-3]
\end{tikzcd}
$$
\end{defn}
\begin{rem}
(a) It follows from Lemma 3.5 in \cite{Ho2} and Theorem 2.4 in \cite{Shi2} that, the isomorphism $\varphi_D$ in Definition 5.2 (iv) is unique. \par 
(b) Notice that when working with density 1 set of primes, the conditions presented in Theorem 2.4 in \cite{Shi2} are automatic. \par 
\end{rem}
\begin{prop}
Let $G$ be a profinite group NF-type with density 1 restricted ramification. Let $D \in \widetilde{S}(G)$. Then $S(G)$-standard pair subfields of $\bar k(D)$ exists and is unique. In particular, there is a group-theoretic reconstruction of the $S(G)$-standard pair subfields of $\bar k(D)$.
\end{prop}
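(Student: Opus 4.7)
The plan is to split the argument into an existence part and a uniqueness part, with uniqueness supplied by Shimizu's bi-anabelian theorem (Theorem 2.4 in \cite{Shi2}), and to then observe that the construction is visibly group-theoretic.

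For existence, I would exhibit the natural candidate. Via the isomorphism $\bar{k}(D) \xrightarrow{\sim} \overline{K_{\mathfrak{p}}}$ from Proposition 5.1 and a fixed embedding $\overline{K} \hookrightarrow \overline{K_{\mathfrak{p}}}$ extending $K \hookrightarrow K_{\mathfrak{p}}$, set $F[D] := K$ and $F_S[D] := K_S$ with $S[D] := S$. Condition (i) is automatic since $K/\mathbb{Q}$ is finite and $\mathbb{Q}$ is the prime subfield of $\bar{k}(D)$; condition (ii) holds because $\delta(S) = 1$ and $K_S$ is, by definition, the maximal extension of $K$ unramified outside $S$; condition (iv) is supplied by the isomorphism $\alpha: G \xrightarrow{\sim} G_{K,S} = \text{Gal}(K_S/K)$ of Definition 2.1 together with the fact that $D \hookrightarrow G$ corresponds to the decomposition subgroup $G_{K_{\mathfrak{p}}} \hookrightarrow G_{K,S}$ at $\mathfrak{p}$ via Proposition 2.2. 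For condition (iii), I would exploit the $D$-equivariant nature of the field structure on $\bar{k}_{\times}(D)$ reconstructed in Proposition 5.1: such automorphisms act trivially on $k(v) \cong K_{\mathfrak{p}}$ (hence fix $K \subset K_{\mathfrak{p}}$ pointwise) and act on $K_S$ through their image in $G_{K,S}$ (hence preserve $K_S$ setwise).

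For uniqueness, let $(F, F_S)$ and $(F', F'_S)$ be two $S(G)$-standard pairs inside $\bar{k}(D)$. The composite $\varphi'_D \circ \varphi_D^{-1}$ is a group isomorphism $\text{Gal}(F_S/F) \xrightarrow{\sim} \text{Gal}(F'_S/F')$ compatible with the two embeddings of $D$. By Theorem 2.4 of \cite{Shi2}, whose hypotheses (1)-(3) are automatic in the density 1 setting by Remark 5.3(b), this isomorphism lifts to a field isomorphism $\psi: F_S \xrightarrow{\sim} F'_S$ with $\psi(F) = F'$. Since $\bar{k}(D)$ is algebraically closed and $F_S, F'_S$ are contained in the algebraic closure of $\mathbb{Q}$ inside $\bar{k}(D)$, the isomorphism $\psi$ extends to an automorphism $\tilde{\psi}$ of $\bar{k}(D)$. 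Condition (iii) applied to $(F, F_S)$ forces $\tilde{\psi}|_F = \text{id}_F$, whence $F = F'$ as subfields of $\bar{k}(D)$; a second application of (iii) forces $\tilde{\psi}(F_S) = F_S$, whence $F_S = F'_S$. Since every ingredient of the construction of $(K, K_S) \subset \bar{k}(D)$ is group-theoretic (Proposition 5.1 and the reconstructions of Sections 2-4), this establishes the group-theoretic reconstruction as well.

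The main obstacle I anticipate is the careful verification of condition (iii) for the natural candidate $(K, K_S)$: an \emph{a priori} abstract field automorphism of $\bar{k}(D)$ need not fix $K$ pointwise (Galois conjugates of $K$ inside $\overline{K_{\mathfrak{p}}}$ are a concrete witness), so the force of (iii) rests on the specific field structure reconstructed in Proposition 5.1 together with the fullness $D \cong G_{K_{\mathfrak{p}}}$ from Remark 2.3, mirroring the analogous subtlety (Definition 3.3 and Lemma 3.5 of \cite{Ho2}) underpinning Remark 5.3(a). Once (iii) is secured for the natural candidate, the uniqueness argument via Shimizu is essentially formal.
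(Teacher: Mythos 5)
Your skeleton --- exhibit the natural candidate $(K,K_S)$ for existence, and use Theorem 2.4 of \cite{Shi2} to compare two standard pairs for uniqueness --- matches the paper's up to the point where Shimizu hands you a field isomorphism $\psi\colon F_S\xrightarrow{\sim}F'_S$. After that the arguments diverge in the step that does the real work, namely upgrading ``$F_S$ and $F'_S$ are isomorphic'' to ``$F_S=F'_S$ as subfields of $\bar k(D)$.'' The paper never invokes condition (iii) here. It records instead that the Shimizu isomorphism $\tau$ is $D$-equivariant, passes to the algebraic closures $F^{\mathrm{al}}[D]_i$ of $F[D]_i$ inside $\bar k(D)$, uses the Neukirch--Uchida theorem to produce a $D$-equivariant isomorphism $\theta\colon F^{\mathrm{al}}[D]_2\xrightarrow{\sim}F^{\mathrm{al}}[D]_1$, and then applies Lemma 3.6 (ii) of \cite{Ho2} --- in essence, that a $D$-equivariant automorphism of $\overline{\mathbb Q}\subset\overline{\mathbb Q_{p(D)}}$ must be the identity, i.e.\ that decomposition subgroups of $G_{\mathbb Q}$ have trivial centralizers --- to get $\theta=\mathrm{id}$ and hence $\tau=\mathrm{id}$. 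The rigidity input is the $D$-equivariance of $\tau$, which your argument never uses.

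Your substitute --- extend $\psi$ to an abstract field automorphism $\tilde\psi$ of $\bar k(D)$ and quote condition (iii) --- has a genuine gap. The extension exists, but $\bar k(D)\cong\overline{\mathbb Q_{p(D)}}$ is abstractly isomorphic to $\mathbb C$, so restriction to the algebraic closure of the prime field gives a surjection $\mathrm{Aut}(\bar k(D))\twoheadrightarrow G_{\mathbb Q}$; if condition (iii) really quantified over all such automorphisms it would force $F[D]=\mathbb Q$ and kill existence. You notice this tension yourself, but your proposed resolution --- that automorphisms of $\bar k(D)$ ``act trivially on $k(v)$'' --- is false for abstract field automorphisms, and it is precisely the unproved claim on which both your existence verification of (iii) and your uniqueness step rest. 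Whatever restricted class of automorphisms makes (iii) satisfiable by the candidate $(K,K_S)$, you would then have to show your particular $\tilde\psi$ lies in that class, which you do not. The paper's detour through $F^{\mathrm{al}}[D]_i$, Neukirch--Uchida, and Lemma 3.6 (ii) of \cite{Ho2} exists exactly to avoid this circularity: it only ever applies rigidity to $D$-equivariant maps, for which the triviality statement is a theorem rather than a clause of the definition.
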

\begin{proof}
The existence of such standard pair subfields of $\bar k(D)$ is immediate. We check the uniqueness. \par 
Let $F_S[D]_1/F[D]_1$ and $F_S[D]_2/F[D]_2$ be $S(G)$-standard pair subfields of $\bar k(D)$. By Definition 5.2 and Remark 5.3 (a), there exists a unique isomorphism
$$
\varphi_2^{-1} \circ \varphi_1: \text{Gal}(F_S[D]_1/F[D]_1) \xrightarrow{\sim} \text{Gal}(F_S[D]_2/F[D]_2)
$$
where $\varphi_i := \text{Gal}(F_S[D]_i/F[D]_i) \xrightarrow{\sim} G$ is the unique isomorphism in Definition 5.2 (iv). Then it follows from Theorem 2.4 in \cite{Shi2}, that $\varphi_2^{-1}\circ \varphi_1$ corresponds to a unique field $D$-equivariant isomorphism
$$
\tau: F_S[D]_2 \xrightarrow{\sim} F_S[D]_1
$$
that restricts to an isomorphism $\tau_0: F[D]_2 \xrightarrow{\sim} F[D]_1$. \par
Now we shall denote by $F^{\text{al}}[D]_i$ for the algebraic closure of $F[D]_i$ in $\bar k(D)$ for $i = 1,2$. Then it holds that we have the following commutative diagram (i.e. every triangle in the following diagram is commutative)
$$
\begin{tikzcd}
	& {\text{Gal}(F^{\text{al}}[D]_i/F[D]_i)} \\
	& D \\
	{\text{Gal}(F_S[D]_i/F[D]_i)} && G
	\arrow[two heads, from=1-2, to=3-1]
	\arrow[two heads, from=1-2, to=3-3]
	\arrow[hook, from=2-2, to=1-2]
	\arrow[hook, from=2-2, to=3-1]
	\arrow[hook, from=2-2, to=3-3]
	\arrow["{\varphi_i}", from=3-1, to=3-3]
\end{tikzcd}
$$
where $D \hookrightarrow \text{Gal}(F^{\text{al}}[D]_i/F[D]_i)$ is the natural inclusion, the left surjection $\text{Gal}(F^{\text{al}}[D]_i/F[D]_i)$ is obtained by restricting an automorphism of $F^{\text{al}}[D]_i$ to $F_S[D]_i$, and the right surjection is the unique surjection makes the diagram commutes. Then there exists a unique isomorphism
$$
\rho: \text{Gal}(F^{\text{al}}[D]_1/F[D]_1) \xrightarrow{\sim} \text{Gal}(F^{\text{al}}[D]_2/F[D]_2).
$$
It follows from the Neukirch-Uchida theorem, that $\rho$ induces a unique $D$-equivariant isomorphism
$$
\theta: F^{\text{al}}[D]_2 \xrightarrow{\sim} F^{\text{al}}[D]_1
$$
which restricts to an isomorphism $F[D]_2 \xrightarrow{\sim} F[D]_1$. On the other hand, Lemma 3.6 (ii) in \cite{Ho2} implies that the isomorphism $\theta$ is the identity map hence also $F[D]_1 \xrightarrow{\sim} F[D]_2$ is the identity map. Furthermore, by considering the following commutative diagram
$$
\begin{tikzcd}
	{\text{Gal}(F^{\text{al}}[D]_1/F[D]_1)} && {\text{Gal}(F^{\text{al}}[D]_2/F[D]_2)} \\
	{\text{Gal}(F_S[D]_1/F[D]_1)} && {\text{Gal}(F_S[D]_2/F[D]_2)}
	\arrow["\rho", from=1-1, to=1-3]
	\arrow[two heads, from=1-1, to=2-1]
	\arrow[two heads, from=1-3, to=2-3]
	\arrow["{\varphi_2^{-1}\circ\varphi_1}"', from=2-1, to=2-3]
\end{tikzcd}
$$
which induces the following commutative diagram of fields
$$    
\begin{tikzcd}
	{F^{\text{al}}[D]_2} && {F^{\text{al}}[D]_1} \\
	{F_S[D]_2} && {F_S[D]_1}
	\arrow["\theta", from=1-1, to=1-3]
	\arrow[hook, from=2-1, to=1-1]
	\arrow["\tau", from=2-1, to=2-3]
	\arrow[hook, from=2-3, to=1-3]
\end{tikzcd}
$$
hence we can conclude that the isomorphism $\tau: F_S[D]_2 \xrightarrow{\sim} F_S[D]_1$ is the identity map. This completes the proof of Proposition 5.4.
\end{proof}
\begin{rem}
In the proof of Proposition 5.4, we have essentially recovered group-theoretically the profinite group $\text{Gal}(F^{\text{al}}[D]/F[D]) \twoheadrightarrow G$, which may be thought of as an analogue of cuspidalisation theory developed in \cite{Mzk2} and \cite{Mzk4}. \par 
On the other hand, take distinct $D,E \in \widetilde{S}(G)$, although the profinite groups $\text{Gal}(F^{\text{al}}[D]/F[D])$ and $\text{Gal}(F^{\text{al}}[E]/F[E])$ are abstractly isomorphic. At this moment, the authors do not know how to construct a canonical isomorphism $\text{Gal}(F^{\text{al}}[D]/F[D]) \xrightarrow{\sim} \text{Gal}(F^{\text{al}}[E]/F[E])$ starting from the profinite group $G (\xrightarrow{\sim} G_{K,S})$. In particular, to construct such a canonical isomorphism, one has to characterise the inertia subgroups of $G$ outside $\widetilde{S}(G)$ group-theoretically (i.e. the "cuspidal inertia groups"). 
\end{rem}
\begin{thm}
Let $G$ be a profinite group of NF-type with density 1 restricted ramification. Then there is a group-theoretic reconstruction of the field $F_S(G)$ and the subfield $F(G) := F_S(G)^G$ such that the following diagram commutes
$$
\begin{tikzcd}
F_S(G) \arrow[r,"\sim"] & K_S \\
F(G) \arrow[u,hookrightarrow] \arrow[r,"\sim"] & K \arrow[u,hookrightarrow]
\end{tikzcd}
$$
where the upper horizontal arrow is an isomorphism equivariant w.r.t $\alpha: G \xrightarrow{\sim} G_{K,S}$, vertical arrows are field embeddings and the lower horizontal arrow is an isomorphism. Moreover, the reconstruction $G \mapsto F_S(G)$ is functorial with respect to isomorphisms and is compatible with taking open subgroups.
\end{thm}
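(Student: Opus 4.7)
The plan is to realize $F_S(G)$ as the $S(G)$-standard pair subfield $F_S[D_0] \subset \bar{k}(D_0)$ for some chosen $D_0 \in \widetilde{S}(G)$, following the strategy of Hoshi in \cite{Ho2}, Section 3, but with the Neukirch--Uchida theorem replaced by Shimizu's restricted-ramification bi-anabelian theorem (Theorem 2.4 of \cite{Shi2}), whose hypotheses are automatic in the density 1 setting by Remark 5.3 (b).

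First, I would fix any $D_0 \in \widetilde{S}(G)$, endow $\bar{k}_{\times}(D_0)$ with its canonical field structure via Proposition 5.1, and apply Proposition 5.4 to obtain the unique $S(G)$-standard pair $F[D_0] \subset F_S[D_0] \subset \bar{k}(D_0)$ together with the unique isomorphism $\varphi_{D_0}: \text{Gal}(F_S[D_0]/F[D_0]) \xrightarrow{\sim} G$. I would then define $F_S(G) := F_S[D_0]$ equipped with the $G$-action transported via $\varphi_{D_0}^{-1}$, and $F(G) := F_S(G)^G = F[D_0]$. The comparison diagram follows by applying Theorem 2.4 of \cite{Shi2} to the composite abstract isomorphism $\alpha \circ \varphi_{D_0}: \text{Gal}(F_S[D_0]/F[D_0]) \xrightarrow{\sim} G_{K,S}$, which lifts uniquely to a $G$-equivariant field isomorphism $F_S(G) \xrightarrow{\sim} K_S$ restricting to $F(G) \xrightarrow{\sim} K$.

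Independence from the choice of $D_0$ is handled by the same Shimizu argument applied internally: for any other $E \in \widetilde{S}(G)$, the composite $\varphi_{D_0} \circ \varphi_E^{-1}$ lifts uniquely to a field isomorphism $F_S[E] \xrightarrow{\sim} F_S[D_0]$, and the uniqueness of such lifts yields the cocycle condition, so the construction is canonical up to unique isomorphism. Functoriality with respect to isomorphisms of profinite groups of NF-type with density 1 restricted ramification follows from the uniqueness statements in Proposition 5.4 and in \cite{Shi2}, Theorem 2.4. For compatibility with passage to an open subgroup $H \subset G$ corresponding to a finite extension $L/K$ inside $K_S$, I would note that $H$ is again of NF-type with density 1 restricted ramification (with base $L$ and the same $K_S$), pick any $D_0' \in \widetilde{S}(H)$ lying in the $G$-orbit of $D_0$, and verify that $F_S[D_0']$ coincides with $F_S[D_0]$ as a subfield of the ambient $\bar{k}(D_0)$; this reduces to the compatibility diagrams of Proposition 3.14 together with the observation that the $S(G)$-standard pair construction is insensitive to enlarging the base field within a fixed ambient algebraic closure.

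The main obstacle is conceptually minor at this stage: the heavy lifting has already been absorbed into Proposition 5.4, which itself rests on the Kummer container and $\Sigma(G)$-cyclotome reconstructions of Sections 3 and 4, and on the reconstruction of $\mathcal{F}(G) \cong \mathbb{Q}_\Sigma^{\text{sol}}$ together with its decomposition structure in Section 4. What remains is the bookkeeping of verifying that the density 1 hypothesis propagates through every construction so that the hypotheses of Shimizu's theorem stay automatic at each invocation, and that the various uniqueness statements patch together coherently as $D_0$ varies and as one passes to open subgroups; no new arithmetic input beyond what was already established in the previous sections should be required.
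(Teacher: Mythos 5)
Your proposal is correct and follows essentially the same route as the paper: both rest entirely on the uniqueness of $S(G)$-standard pairs (Proposition 5.4) and on Theorem 2.4 of \cite{Shi2} applied to the isomorphisms $\varphi_D \circ \varphi_E^{-1}$ to produce the unique gluing isomorphisms $\gamma_{E,D}: F_S[E] \xrightarrow{\sim} F_S[D]$. The only difference is presentational: where you fix one $D_0$ and argue canonicity up to unique isomorphism via the cocycle condition, the paper makes the construction choice-free by defining $F_S(G)$ as the subring of $\gamma$-compatible tuples inside $\prod_{D \in \widetilde{S}(G)} F_S[D]$, which is the standard device for turning ``canonical up to unique isomorphism'' into an actual group-theoretic algorithm.
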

\begin{proof}
We shall use a similar construction as in \cite{Ho2}. Let $D,E \in \widetilde{S}(G)$, we write
$$
f_{D,E}: \text{Gal}(F_S[D]/F[D]) \xrightarrow{\sim} \text{Gal}(F_S[E]/F[E])
$$
where $f_{D,E} := \varphi_D \circ \varphi_E^{-1}$ c.f. Definition 5.2. Then by Theorem 2.4 in \cite{Shi2}, $f_{D,E}$ induces a unique isomorphism
$$
\gamma_{E,D}: F_S[E] \xrightarrow{\sim} F_S[D]
$$
that restricts to an isomorphism $\tilde \gamma_{E,D}: F[E] \xrightarrow{\sim} F[D]$. Then consider the subring
$$
F_S(G) \subset \prod_{D \in \widetilde{S}(G)} F_S[D]
$$
defined as
$$
F_S(G) := \{(\dots,a_D,\dots): \gamma_{D,E}(a_D) = a_E~\forall D,E \in \widetilde{S}(G)\}.
$$
In particular, the component-wise action of $G$ on $\prod_{D \in \widetilde{S}(G)}F_S[D]$ determined by the component-wise Galois action of $\text{Gal}(F_S[D]/F[D])$ on $F_S[D]$ via $\varphi_D^{-1}$ for all $\widetilde{S}(G)$ induces an action on $F_S(G)$. One checks immediately that this action preserves $F_S(G)$, indeed, since $G$ acts on each components of $F_S(G)$ via $\varphi_D^{-1}$, which corresponds to an element of $\text{Gal}(F_S[D]/F[D])$. \par 
Next we check that $F_S(G)$ is a field. Notice that by the construction of $F_S(G)$, we one verifies easily that if an element is not a unit, it must be a zero divisor. But by the definition of $F_S(G)$, a zero divisor of $F_S(G)$ must be component-wise zero divisor as well, which means there can be no non-zero zero divisors, together with the fact that $\gamma_{D,E}(a_D^{-1}) = a_E^{-1}$, we can conclude that $F_S(G)$ is a field. Furthermore, since the action of $G$ on $F_S(G)$ induces automorphisms on $F_S(G)$, hence $F(G) := F_S(G)^G$ is also a field such that $F_S(G)/F(G)$ is Galois with Galois group isomorphic to $G$. Hence we obtain an isomorphism
$$
\iota(G): G_{K,S} \xrightarrow{\sim} \text{Gal}(F_S(G)/F(G)).
$$
Hence the existence and the commutativity of the diagram
$$
\begin{tikzcd}
    F_S(G) \arrow[r,"\sim"] & K_S \\
    F(G) \arrow[u,hookrightarrow] \arrow[r,"\sim"] & K \arrow[u,hookrightarrow]
\end{tikzcd}
$$
follows from Theorem 2.4 in \cite{Shi2}. \par 
The functoriality of the reconstruction $G \mapsto F_S(G)$ follows from the assumption that $S \xrightarrow{\sim} S(G)$ is conjugate-stable (c.f. Definition 2.1) and various constructions involved. Finally, if $H \subset G$ is an open subgroup, then it follows from the condition $\mathscr{P}_K^{\bullet} \subset S$, together with the fact that open subgroup of NF-type with density 1 restricted ramification is again a profinite group of NF-type with density 1 restricted ramification.
\end{proof}
\begin{rem}
The reconstruction $G \mapsto F_S(G)$ uses Theorem 2.4 in \cite{Shi2} in an essential way, hence does not provide an alternative proof to Theorem 2.4 in \cite{Shi2}.
\end{rem}
\begin{rem}
In this paper, we are working with $\delta(S) = 1$. While Theorem 2.4 is proven for $0 < \delta(S) < 1$, the following technical issues stops us from proving a positive density version of Theorem 5.6: 
\begin{itemize}
    \item If $0<\delta(S)<1$, then the local-global map
    $$
    H^1(G_K,\mu_n) \to \prod_{\mathfrak{p} \in S_f} H^1(G_{K_{\mathfrak{p}}},\mu_n)
    $$
    is not necessarily injective. From this point of view, it is not clear how to construct a version of $\mathcal{H}^{\times}(K,S)$ to make section 4 work.
    \item If $0 <\delta(S)<1$, it is not clear that the decomposition groups of $G_{K,S}$ at primes in $S_f(K_S)$ are full or not. If the decomposition groups are not full, then Theorem 2.4 does not work.
\end{itemize}
\end{rem}
\bibliography{ref.bib}
\bibliographystyle{alphaurl}
\end{document}